\numberwithin{equation}{section}
\newcommand{\R}{{\mathbb R}}
\newcommand{\Z}{{\mathbb Z}}
\newcommand{\C}{{\mathbb C}}
\newcommand{\be}{\begin{eqnarray}}
\newcommand{\ben}{\begin{eqnarray*}}
\newcommand{\en}{\end{eqnarray}}
\newcommand{\enn}{\end{eqnarray*}}
\newcommand{\pa}{\partial}
\newcommand{\ov}{\overline}
\newcommand{\divv}{{\rm div\,}}
\newcommand{\real}{{\rm Re\,}}
\newcommand{\G}{\Gamma}
\newcommand{\om}{\omega}
\newtheorem{theorem}{Theorem}[section]
\newtheorem{lemma}[theorem]{Lemma}
\newtheorem{definition}[theorem]{Definition}
\newtheorem{remark}[theorem]{Remark}
\newtheorem{proposition}[theorem]{Proposition}
\definecolor{rot}{rgb}{0.000,0.000,0.000}
\definecolor{rot1}{rgb}{0.000,0.000,0.000}
\begin{document}
\renewcommand{\theequation}{\arabic{section}.\arabic{equation}}
\begin{titlepage}
\title{\bf Multi-frequency iterative methods for the inverse medium scattering problems in elasticity}

\author{Gang Bao\thanks{School of Mathematical Sciences, Zhejiang University, Hangzhou 310027, China. Email: {\tt baog@zju.edu.cn}}\;,
Fang Zeng\thanks{College of Mathematics and Statistics, Chongqing University, Chongqing 400044, China. Email:{\tt fzeng@cqu.edu.cn}}\;,
Tao Yin\thanks{Department of Computing \& Mathematical Sciences, California Institute of Technology, 1200 East California Blvd., CA 91125, United States. Email:{\tt taoyin89@caltech.edu}}
}
\date{}
\end{titlepage}
\maketitle
\vspace{.2in}

\begin{abstract}
This paper concerns the reconstruction of multiple elastic parameters (Lam\'e parameters and density) of an inhomogeneous medium embedded in an infinite homogeneous isotropic background in $\R^2$. The direct scattering problem is reduced to an equivalent system on a bounded domain by introducing an exact transparent boundary condition and the wellposedness of the corresponding variational problem is established. The Fr\'{e}chet differentiability of the near-field scattering map is studied with respect to the elastic parameters. Based on the multi-frequency measurement data and its phaseless term, two Landweber iterative algorithms are developed for the reconstruction of the multiple elastic parameters. Numerical examples, indicating that plane pressure incident wave is a better choice, are presented to show the validity and accuracy of our methods.

\vspace{.2in}
Keywords: Elastic wave, inverse medium problem, iterative method, multi-frequency
\end{abstract}

\section{Introduction}
\label{sec:1}

Time-harmonic elastic scattering problems play important roles in many fields of applications and the linear elasticity theory provides an essential tool for analysis and design of mechanic systems and engineering structures (\cite{Ammari2015,Ciarlet}). In this paper, we consider several inhomogeneous isotropic elastic bodies embedded in an infinite homogeneous isotropic background medium in $\R^2$. Denote by $\lambda$, $\mu$ the Lam\'e parameters with $\mu>0$, $\lambda>0$, and by $\rho$ the density of the elastic medium. Suppose that $\lambda=\lambda_0(1+q_\lambda)$, $\mu=\mu_0(1+q_\mu)$ and $\rho=\rho_0(1+q_\rho)$ where $\lambda_0$, $\mu_0$ and $\rho_0$ are constants representing the Lam\'e parameters and density of the background elastic medium. Denote $B_R:=\{x\in\R^2:|x|<R\}$ and $\Gamma_R:=\pa B_R$. Set $q:=(q_\lambda,q_\mu,q_\rho)^\top$. Throughout, we make the following assumption:
\begin{description}
\item[Assumption:] there exists some $R>0$ and constants $C_1,C_2>0$ such that
    \ben
    \mbox{supp}\,\{q\}\subset B_R, q\in\mathcal{K}:=\{q\in L^\infty(B_R)^3: -1<C_1\le q_\lambda,q_\mu,q_\rho\le C_2<\infty\}.
    \enn
\end{description}

Let $u^{in}$ be a plane incident field satisfying
\be
\label{Navier-inc}
\nabla\cdot \sigma_{0,0}(u^{in})+   \rho_0\omega^2u^{in} = 0\quad\mbox{in}\quad \R^2,
\en
where $\omega>0$ is the frequency and the stress tensor $\sigma_{q_\lambda,q_\mu}(u)$ is defined as
\ben
\sigma_{q_\lambda,q_\mu}(u):=\lambda_0(1+q_\lambda)(\mbox{div}\,u){\bf I}+ 2\mu_0(1+q_\mu)\mathcal{E}(u),\quad \mathcal{E}(u):=\frac{1}{2}\left(\nabla\,u+ \nabla\,u^\top\right),
\enn
and ${\bf I}$ stands for the $2\times2$ identity matrix. The elliptic equation (\ref{Navier-inc}) can be restated as
\ben
\Delta_{\lambda_0,\mu_0}^*u^{in}+   \rho_0\omega^2u^{in} = 0\quad\mbox{in}\quad \R^2,
\enn
where the Lam\'e operator $\Delta_{\lambda,\mu}^*$ is defined as
\ben
\Delta_{\lambda,\mu}^* = \mu\,\mbox{div}\,\mbox{grad} + (\lambda + \mu)\,\mbox{grad}\, \mbox{div}.
\enn
In this paper, the incident wave is allowed to be either a plane shear wave taking the form
\ben
u^{in}=u_s^{in}:=d^\perp e^{ik_sx\cdot d},\quad d=(\cos\theta^{in},\sin\theta^{in})^\top\in \textcolor{rot1}{\Gamma_1}, \textcolor{rot1}{d^\perp=(-\sin\theta^{in},\cos\theta^{in})^\top,}
\enn
or a plane pressure wave taking the form
\ben
u^{in}=u_p^{in}:=d e^{ik_px\cdot d},\quad d\in \textcolor{rot1}{\Gamma_1},
\enn
where
\ben
k_s = \omega\sqrt{\frac{\rho_0}{\mu_0}},\quad k_p = \omega \sqrt{\frac{\rho_0}{\lambda_0 + 2\mu_0}},
\enn
are the wave numbers of pressure wave and shear wave, respectively and $d$ and $\theta^{in}$ are referred as the direction and angle of the incidence, respectively.

The total displacement field $u=(u_1,u_2)^\top$ can be modeled by the reduced Navier equation
\be
\label{Navier-tot}
\nabla\cdot \sigma_{q_\lambda,q_\mu}(u)+\omega^2\rho u=0\quad\mbox{in}\quad \R^2.
\en
Since the background medium is unbounded, an appropriate radiation condition at infinity must be imposed on the scattered field $u^{sc}:=u-u^{in}$ to ensure well-posedness of the scattering problem. The scattered field in $\R^2\backslash\ov{B_R}$ can be decomposed into the sum of the compressional (longitudinal) part ${u}^{sc}_p$ and the shear (transversal) part ${u}^{sc}_s$ as follows :
\be
\label{decomposition}
u^{sc}={u}^{sc}_p+{u}^{sc}_s,\quad
{u}^{sc}_p=-\frac{1}{k_p^2}\,\mbox{grad}\,\mbox{div}\;{u}^{sc},\quad {u}^{sc}_s=\frac{1}{k_s^2}\,\overrightarrow{\mbox{curl}}\,\mbox{curl}\;{u}^{sc},
\en
where the two-dimensional operators \mbox{curl} and $\overrightarrow{\mbox{curl}}$ are defined respectively by
\ben
\mbox{curl}\,v=\partial_1 v_2-\partial_2 v_1,\quad v=(v_1,v_2)^\top,\qquad \overrightarrow{\mbox{curl}}\; f:=(\partial_2f, -\partial_1f)^\top.
\enn
It then follows from the decompositions in (\ref{decomposition}) that
\ben
(\Delta+k_\alpha^2)\, u_\alpha^{sc}=0,\qquad \alpha=p,s,\qquad \divv u_s^{sc}=0,\quad\textcolor{rot1}{\mbox{curl}}\,u_p^{sc}=0.
\enn
The scattered field is required to satisfy the Kupradze radiation condition (see e.g. \cite{Kupradze})
\be
\label{RadiationCond}
\lim_{r \to \infty} r^{\frac{1}{2}}\left(\frac{\partial u^{sc}_t}{\partial r}-ik_tu_t^{sc}\right) = 0,\quad r=|x|,\quad t=p,s,
\en
uniformly with respect to all $\hat{x}=x/|x|\in \textcolor{rot1}{\Gamma_1}$.

Given the incident field $u^{in}$, the direct problem is to determine the scattered field $u^{sc}$ for the known elastic parameters $\lambda,\mu,\rho$. In practice, the original boundary value problem for the Lam\'e system can be reduced to an equivalent system on a bounded domain via introducing an exact transparent boundary condition (TBC) on an artificial boundary enclosing the inhomogeneous bodies. The TBC can be formulated by the so-called Dirichlet-to-Neumann (DtN) map taking in the form of a Fourier series (\cite{BHSY,GK1990,Li2016,LY17}). Based on the properties of the DtN map and Fredholm alternative theorem, the uniqueness and existence of weak solutions of the equivalent system can be derived(Section \ref{sec:2}, Theorem \ref{wellposedness}).

The main purpose of this paper is to study numerical algorithm for the inverse medium problem in the elastic scattering, that is, to determine the unknown elastic parameters $q_\lambda, q_\mu, q_\rho$ from the measurements of near-field data $u|_{\G_R}$, given the incident field $u^{in}$. For static case, i.e., $\omega=0$, uniqueness of the inverse medium problem has been investigated under appropriate assumptions on the elastic parameters in \cite{ANS91,BFV14,ER02,IY15,NU93,NU95,NU941}. For the time-harmonic case, Beretta et al (\cite{BHFV17}) proves uniqueness when the Lam\'e parameters and the density are assumed to be piecewise constant on a given domain partition. For the mathematical analysis of the stability for the inverse medium problems in elasticity, we refer to \cite{ACMR14,BFV14,I90,NU93,NU941,NU95}.

Recently, it has been realized that the use of multi-frequency data is an effective approach to overcome the major difficulties associated with the inverse medium problems: the ill-posedness and the presence of many local minima. Based on the multi-frequency measurements and the Fr\'echet derivative of the solution operator, a stable recursive linearization method is proposed in \cite{BL04} for the inverse medium problems in acoustics, see also in \cite{BL052,BL051,BL07,BL09}. The idea to use multi-frequency data has also been widely developed in proving uniqueness and increasing stability for inverse source problems in acoustics, elastodynamics and electromagnetics (\cite{BHKY18,BLLT,BLZ,BLT,BT,CIL16,HLLZ,IL18,LGY17,LY17}). It still remains open whether or not the multi-frequency measurements uniquely determine the Lam\'e parameters and the density? This paper is designed to study the capability of iterative methods for inverse medium problems in elasticity using multi-frequency measurements. Compared with the acoustic and electromagnetic case, the elasticity problem appears to be more complicated because of the coexistence of pressure and shear waves that propagate at different speeds and the aim to reconstruct multiple parameters. In particular, if the \textcolor{rot1}{Lam\'e} parameters are constants, the inverse medium problem in elasticity is consistent with that in acoustics, see Section \ref{sec:4}. Relying on the variational arguments for the direct problems, we derive the Fr\'echet derivative of the solution operator with respect to the elastic parameters and investigate the adjoint of the Fr\'echet derivative. Then we employ the Landweber iterative method based on the multi-frequency measurements to find the unknown elastic parameters. At each iteration step, the forward problem and an adjoint one need to be solved and the correctness of the parameters needs to be evaluated.

The outline of the paper is as follows. In Section \ref{sec:2}, we derive the well-posedness of the direct scattering problem using variational approach and investigate the Fr\'echet differentiability of the near-field scattering map. We develop the Landweber iterative methods for solving the inverse medium problem in Section \ref{sec:3}. In Section \ref{sec:4}, we give a brief discussion about a special case that $q_\lambda=q_\mu=0$. Numerical examples are presented in Section \ref{sec:5}.

\section{Direct scattering problem}
\label{sec:2}

In this section, we discuss the \textcolor{rot1}{well-posedness} of the direct elastic scattering problem and investigate the Fr\'echet differentiability of the near-field scattering map.

\subsection{Variational formulation}
\label{sec:2.1}

We first reduce the original scattering problem described in section \ref{sec:1} on a bounded domain via introducing a TBC on an artificial boundary $\Gamma_R$ enclosing the inhomogeneity inside. The TBC is formulated by the so-called DtN map defined as follows.
\begin{definition}
For any $w\in H^{1/2}(\G_R)^2$, The DtN map $\mathcal{B}$ applied to $w$ is defined as $T_{\lambda_0,\mu_0}v^{sc}|_{\G_R}$, where $v^{sc}$ satisfies
\be
\label{def1}
\Delta^{*}_{\lambda_0,\mu_0}v^{sc} +   \rho_0\omega^2v^{sc} &=& 0\quad\mbox{in}\quad \R^2\backslash\ov{B_R},\\
\label{def2}
v^{sc} &=& w \quad\mbox{on}\quad \G_R,
\en
and the Kupradze radiation condition. Here, $T_{\lambda_0,\mu_0}$ is the traction operator defined by
\ben
T_{\lambda_0,\mu_0}u:=\nu\cdot\sigma_{0,0}(u)=2\mu_0 \, \partial_{\nu} u + \lambda_0 \,
\nu \, \divv u
-\mu_0 \nu^{\perp}\,\rm{curl}\,u,
\enn
where \textcolor{rot1}{$\nu=(\nu_1,\nu_2)^\top$} denotes the exterior unit normal vector to $\G_R$ \textcolor{rot1}{and the corresponding tangential vector is given by $\nu^{\perp}:=(-\nu_2,\nu_1)^\top$}.
\end{definition}

The DtN map $\mathcal{B}$ is well-defined since the Dirichlet-kind boundary value problem (\ref{def1})-(\ref{def2}) is uniquely solvable in $H^1_{loc}(\R^2\backslash\ov{B_R})^2$, see Corollary 2.3 in \cite{BHSY}. For all $n\in\Z$, denote
\ben
\alpha_n(t_\xi):=\frac{{H_n^{(1)}}'(t_\xi)}{H_n^{(1)}(t_\xi)},\quad \beta_n(t_\xi):=\frac{{H_n^{(1)}}''(t_\xi)}{H_n^{(1)}(t_\xi)},\quad \xi=p,s,
\enn
where
\ben
t_\xi=k_\xi R.
\enn
Following the procedure described in \cite{BHSY}, it can be derived that
\be
\label{DtN}
\mathcal{B}w=\sum_{n\in\Z}\frac{1}{2\pi R}M_\theta^\top W_n \int_0^{2\pi} M_\phi w(R,\phi) e^{in(\theta-\phi)}d\phi,
\en
where the matrix $M_\theta$ dependent on the angle $\theta\in[0, 2\pi)$ is defined as
\ben
M_\theta:=\begin{bmatrix}
\cos\theta & \sin\theta \\
-\sin\theta & \cos\theta
\end{bmatrix}
\enn
and the coefficient matrix $W_n$ is given by
\be
\label{MatrixW}
W_n=B_nA_n^{-1},
\en
with
\ben
&&A_n:=\begin{bmatrix}
t_p\alpha_n(t_p) & in \\
in & -t_s\alpha_n(t_s)
\end{bmatrix},\\
&&B_n:= \begin{bmatrix}
2\mu_0 t_p^2\beta_n(t_p) -\lambda_0 t_p^2 & 2i\mu_0 n\left( t_s\alpha_n(t_s)-1\right) \\
2i\mu_0 n\left(t_p\alpha_n(t_p)-1\right) & -2\mu_0 t_s^2\beta_n(t_s) -\mu_0 t_s^2
\end{bmatrix}.
\enn
\textcolor{rot1}{For the invertibility of matrix $A_n$, we refer to Lemma 2.11 in \cite{BHSY}. Denote by $\Lambda_n$ the determinant of $A_n$.}

\begin{lemma}
For all $\varphi\in (H^{1/2}(\Gamma_R))^2$, the DtN mapping $\mathcal{B}$ can be expressed equivalently as
\be
\label{expressionDtN2}
\mathcal{B}\varphi:=\sum_{n\in\Z}\frac{1}{2\pi R}M_\theta^\top\,[W_n]^\top\int_0^{2\pi} M_\phi\,\varphi\,e^{in(\phi-\theta)}d\phi,
\en
In addition, it holds that
\be
\label{adjointDtN}
\mathcal{B}^*\varphi=\ov{\mathcal{B}\,\ov{\varphi}}.
\en
\end{lemma}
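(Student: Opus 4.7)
The plan is to derive both identities from a single symmetry property of $\mathcal{B}$, namely the unconjugated bilinear symmetry
\begin{equation*}
\int_{\Gamma_R} \mathcal{B}\varphi_1 \cdot \varphi_2 \, ds = \int_{\Gamma_R} \varphi_1 \cdot \mathcal{B}\varphi_2 \, ds, \qquad \varphi_1,\varphi_2 \in H^{1/2}(\Gamma_R)^2.
\end{equation*}
First I would establish this identity by applying Betti's second formula to the two radiating solutions $v_1,v_2$ of (\ref{def1})--(\ref{def2}) on an annulus $B_{R'}\setminus\overline{B_R}$: the volume term vanishes because each $v_i$ solves the homogeneous Lam\'e equation with real coefficients, and the boundary contribution on $\Gamma_{R'}$ tends to zero as $R'\to\infty$ by the Kupradze condition (\ref{RadiationCond}) applied to the separate compressional and shear components produced by (\ref{decomposition}).

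For (\ref{expressionDtN2}), I would read this symmetry at the level of kernels. The representation (\ref{DtN}) identifies the kernel of $\mathcal{B}$ (written so that $\mathcal{B}\varphi(\theta)=\int_0^{2\pi}K(\theta,\phi)\varphi(R,\phi)\,d\phi$) as
\begin{equation*}
K(\theta,\phi)=\frac{1}{2\pi R}\sum_{n\in\Z} M_\theta^\top W_n M_\phi\,e^{in(\theta-\phi)},
\end{equation*}
and the bilinear symmetry is equivalent to the pointwise identity $K(\theta,\phi)=K(\phi,\theta)^\top$. Since $M_\theta$ and $M_\phi$ are real, $(M_\phi^\top W_n M_\theta)^\top=M_\theta^\top W_n^\top M_\phi$, so transposing the series for $K(\phi,\theta)$ and substituting back reproduces precisely the right-hand side of (\ref{expressionDtN2}).

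For (\ref{adjointDtN}), I would specialize the symmetric pairing to $\varphi_2=\overline{\psi}$ and use the reality of the Lam\'e coefficients and of $M_\theta$. Writing
\begin{equation*}
\langle\mathcal{B}\varphi,\psi\rangle_{L^2(\Gamma_R)^2}=\int_{\Gamma_R}\mathcal{B}\varphi\cdot\overline{\psi}\,ds=\int_{\Gamma_R}\varphi\cdot\mathcal{B}\overline{\psi}\,ds=\langle\varphi,\overline{\mathcal{B}\overline{\psi}}\rangle_{L^2(\Gamma_R)^2},
\end{equation*}
uniqueness of the Hermitian adjoint then forces $\mathcal{B}^*\psi=\overline{\mathcal{B}\overline{\psi}}$.

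The principal technical step is the vanishing of the $\Gamma_{R'}$ boundary term in Betti's formula, which calls for controlling the cross products $T v_i\cdot v_j$ via the Sommerfeld-type decay of the compressional and shear parts of each $v_i$ separately. An equivalent purely algebraic route is to verify $W_{-n}=W_n^\top$ directly: combining the Hankel parity $\alpha_{-n}(t_\xi)=\alpha_n(t_\xi)$, $\beta_{-n}(t_\xi)=\beta_n(t_\xi)$ with the Bessel identity $t_\xi^2\beta_n(t_\xi)+t_\xi\alpha_n(t_\xi)+t_\xi^2-n^2=0$ and the dispersion relations $(\lambda_0+2\mu_0)k_p^2=\mu_0 k_s^2=\omega^2\rho_0$ collapses (\ref{DtN}) into (\ref{expressionDtN2}) after the reindexing $n\to-n$, and (\ref{adjointDtN}) then follows by complex conjugation together with the reality of $M_\theta$.
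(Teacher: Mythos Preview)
Your proposal is correct, and your ``equivalent purely algebraic route'' at the end is essentially what the paper does: it reindexes (\ref{DtN}) via $n\mapsto -n$ and then verifies $W_{-n}=W_n^\top$ by computing the four entries of $W_n=B_nA_n^{-1}$ explicitly, using the Bessel identity $t_\xi^2\beta_n(t_\xi)+t_\xi\alpha_n(t_\xi)+(t_\xi^2-n^2)=0$ together with the dispersion relations $(\lambda_0+2\mu_0)k_p^2=\mu_0k_s^2=\rho_0\omega^2$ to show that the diagonal entries are even in $n$ while $W_n^{(1,2)}=-W_n^{(2,1)}$ is odd in $n$. The identity (\ref{adjointDtN}) is then read off from a direct manipulation of the $L^2$ pairing, which is exactly your specialization $\varphi_2=\overline{\psi}$.

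Your primary route via Betti's second formula is a genuinely different argument. It recognises the bilinear symmetry $\int_{\Gamma_R}\mathcal{B}\varphi_1\cdot\varphi_2\,ds=\int_{\Gamma_R}\varphi_1\cdot\mathcal{B}\varphi_2\,ds$ as elastodynamic reciprocity for the exterior radiating problem, and then reads both (\ref{expressionDtN2}) and (\ref{adjointDtN}) off the resulting kernel identity $K(\theta,\phi)=K(\phi,\theta)^\top$. This is more conceptual and makes clear that the result does not depend on the particular Fourier representation of $\mathcal{B}$; it would carry over unchanged to other artificial boundaries or to three dimensions. The price is the far-field step you flag: showing that the unconjugated Betti boundary term on $\Gamma_{R'}$ vanishes requires controlling the $p$--$s$ cross terms via the separate Sommerfeld asymptotics, which is standard but not entirely trivial. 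The paper's entrywise computation avoids any asymptotic analysis and is fully self-contained, at the cost of hiding the structural reason behind the symmetry.
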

\begin{proof}
We first prove (\ref{adjointDtN}). We can derive that for all $\varphi,\psi\in (H^{1/2}(\Gamma_R))^2$,
\ben
\langle\mathcal{B}^*\varphi,\psi\rangle_{\Gamma_R} &=& \langle\varphi,\mathcal{B}\psi\rangle_{\Gamma_R} \\
&=& \int_0^{2\pi}\sum_{n\in\Z}\frac{1}{2\pi} \varphi^\top M_\theta^\top\,\ov{W_n}\int_0^{2\pi} M_\phi\,\ov{\psi}\,e^{in(\phi-\theta)}d\phi d\theta \\
&=& \int_0^{2\pi}\sum_{n\in\Z}\frac{1}{2\pi} \ov{\psi}^\top M_\phi^\top\,\ov{[W_n]^\top}\int_0^{2\pi} M_\theta\,\varphi\,e^{in(\phi-\theta)}d\theta d\phi \\
&=& \int_0^{2\pi}\sum_{n\in\Z}\frac{1}{2\pi} \ov{\psi}^\top \ov{\left\{M_\phi^\top\,[W_n]^\top\int_0^{2\pi} M_\theta\,\ov{\varphi}\,e^{in(\theta-\phi)}d\theta\right\}} d\phi \\
&=& \langle\ov{\mathcal{B}\ov{\varphi}},\psi\rangle_{\Gamma_R}
\enn
where $\langle\cdot,\cdot\rangle_{\Gamma_R}$ is the $L^2$ duality pairing between $(H^{-1/2}(\Gamma_R))^2$ and $(H^{1/2}(\Gamma_R))^2$. It remains to prove (\ref{expressionDtN2}).
It follows from (\ref{DtN}) that
\ben
\mathcal{B}w &=& \sum_{n\in\Z}\frac{1}{2\pi R}M_\theta^\top W_n \int_0^{2\pi} M_\phi w(R,\phi) e^{in(\theta-\phi)}d\phi\\
&=& \sum_{n\in\Z}\frac{1}{2\pi R}M_\theta^\top W_{-n} \int_0^{2\pi} M_\phi w(R,\phi) e^{in(\phi-\theta)}d\phi
\enn
where
\ben
W_{-n}=B_{-n}A_{-n}^{-1},
\enn
with
\ben
&&A_{-n}:=\begin{bmatrix}
t_p\alpha_n(t_p) & -in \\
-in & -t_s\alpha_n(t_s)
\end{bmatrix},\\
&&B_{-n}:= \begin{bmatrix}
2\mu_0 t_p^2\beta_n(t_p) -\lambda_0 t_p^2 & -2i\mu_0 n\left( t_s\alpha_n(t_s)-1\right) \\
-2i\mu_0 n\left(t_p\alpha_n(t_p)-1\right) & -2\mu_0 t_s^2\beta_n(t_s) -\mu_0 t_s^2
\end{bmatrix}.
\enn
\textcolor{rot1}{It follows from the properties of Hankel function that
\ben
{H_n^{(1)}}''(z)=\left(\frac{n^2}{z^2}-1\right)H_n^{(1)}(z)- \frac{1}{z} {H_n^{(1)}}'(z),
\enn
giving rise to the identities
\ben
\beta_n(t_p)=\frac{n^2}{t_p^2}-1-\frac{1}{t_p}\alpha_n(t_p), \quad \beta_n(t_s)=\frac{n^2}{t_s^2}-1-\frac{1}{t_s}\alpha_n(t_s).
\enn
From the expressions of $A_n^{-1}$ and $B_n$ we get the entries $W_n^{(i,j)}$ of $W_n$, given by (see also Lemma 2.13 in \cite{BHSY})
\ben
W_n^{(1,1)} &=& \frac{1}{R\Lambda_n}\left[ -2\mu\Lambda_n +\rho_0\omega^2R^2t_s\alpha_n(t_s)\right], \\
W_n^{(2,2)} &=& \frac{1}{R\Lambda_n}\left[-2\mu\Lambda_n +\rho_0\omega^2R^2t_p\alpha_n(t_p)\right],\\
W_n^{(1,2)} &=& \frac{1}{R\Lambda_n}\left[ -2in\mu\Lambda_n +in\rho_0\omega^2R^2\right],\\
W_n^{(2,1)} &=& -W_n^{(1,2)}.
\enn
Then we can obtain that $W_{-n}^{(1,1)}=W_n^{(1,1)}$, $W_{-n}^{(2,2)}=W_n^{(2,2)}$ and $W_{-n}^{(1,2)}=W_n^{(2,1)}$. These further imply that $W_{-n}=[W_n]^\top$ which completes the proof.}
\end{proof}

The following property of the coefficient matrix $W_n$ can be obtained directly from Lemma 2.13 in \cite{BHSY}.
\begin{lemma}
\label{positive}
The matrix $\widetilde{W}_n=-(W_n+W_n^*)/2$ is positive definite for sufficiently large $|n|$.
\end{lemma}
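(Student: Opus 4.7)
The plan is to invoke Sylvester's criterion on the $2\times 2$ Hermitian matrix $\widetilde{W}_n$: positive definiteness is equivalent to $\widetilde{W}_n^{(1,1)}>0$ and $\det\widetilde{W}_n>0$. Since $W_n^{(2,1)}=-W_n^{(1,2)}$, I would first observe
\ben
\widetilde{W}_n^{(1,1)}=-\real W_n^{(1,1)},\qquad \widetilde{W}_n^{(2,2)}=-\real W_n^{(2,2)},\qquad \widetilde{W}_n^{(1,2)}=-i\,\Ima W_n^{(1,2)},
\enn
so the task reduces to tracking the real parts of the diagonal entries and the imaginary part of the off-diagonal entry of $W_n$ listed in the previous lemma.

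The next step is an asymptotic analysis of $\alpha_n$ and $\Lambda_n$. Using $H_n^{(1)}=J_n+iY_n$ with $|J_n|\ll|Y_n|$ for $|n|\gg t$, and the classical asymptotic $Y_n(t)\sim -\frac{1}{\pi}\Gamma(|n|)(2/t)^{|n|}$, one gets $\alpha_n(t)=Y_n'(t)/Y_n(t)+O(Y_n(t)^{-2})$, which is real up to a super-exponentially small error. Writing $\alpha_n(t)=-|n|/t+\delta_n(t)$ and matching terms in the Bessel identity $\beta_n(t)=n^2/t^2-1-\alpha_n(t)/t$ (together with $\beta_n=\alpha_n'+\alpha_n^2$), I obtain
\ben
\alpha_n(t) = -\frac{|n|}{t} + \frac{t}{2|n|} + O(|n|^{-2}).
\enn
Plugging this into $\Lambda_n=n^2 - t_p t_s \alpha_n(t_p)\alpha_n(t_s)$, the $n^2$ terms cancel and I obtain $\Lambda_n\to(t_p^2+t_s^2)/2>0$ as $|n|\to\infty$.

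Inserting these asymptotics into the closed forms of $W_n^{(i,j)}$ recorded in the previous proof, and using the identities $\rho_0\omega^2 R^2=(\lambda_0+2\mu_0)t_p^2=\mu_0 t_s^2$, the diagonal entries of $\widetilde{W}_n$ grow like $\frac{2\rho_0\omega^2 R}{t_p^2+t_s^2}|n|$, whereas
\ben
|\widetilde{W}_n^{(1,2)}|\sim \frac{2\rho_0\omega^2 R\,\mu_0}{(t_p^2+t_s^2)(\lambda_0+2\mu_0)}\,|n|,
\enn
so that
\ben
\det\widetilde{W}_n\sim\left(\frac{2\rho_0\omega^2 R}{t_p^2+t_s^2}\right)^{\!2}\left[\,1-\left(\frac{\mu_0}{\lambda_0+2\mu_0}\right)^{\!2}\,\right]n^2.
\enn
Since $\lambda_0,\mu_0>0$ force $\mu_0<\lambda_0+2\mu_0$, the bracket is strictly positive, and both $\widetilde{W}_n^{(1,1)}$ and $\det\widetilde{W}_n$ are positive for all sufficiently large $|n|$.

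The main technical obstacle is that the naive estimate $\alpha_n(t)\sim -|n|/t$ already forces the cancellation $\Lambda_n=n^2-n^2+o(1)$, so the whole argument depends on extracting the next-order $O(1/|n|)$ correction (either through the Bessel ODE as sketched above or via the Debye expansion of Hankel functions). Once $\Lambda_n$ is pinned down as a positive constant in the limit, the remainder collapses to the elementary inequality $\mu_0<\lambda_0+2\mu_0$, which is automatic from the physical assumptions on the Lam\'e parameters.
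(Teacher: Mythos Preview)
The paper does not actually prove this lemma; the sentence preceding it simply states that the result ``can be obtained directly from Lemma~2.13 in \cite{BHSY}'', and no argument is supplied. Your approach---extracting the $O(1/|n|)$ correction in $\alpha_n(t)=-|n|/t+t/(2|n|)+O(|n|^{-2})$ so that the leading $n^2$ cancellation in $\Lambda_n$ resolves to the positive limit $(t_p^2+t_s^2)/2$, and then reducing Sylvester's criterion for $\widetilde{W}_n$ to the elementary inequality $\mu_0<\lambda_0+2\mu_0$---is correct and provides a self-contained proof. The only point worth tightening is the claim that $\alpha_n$ is ``real up to a super-exponentially small error'': this is true and follows cleanly from the Wronskian identity $\Ima\alpha_n(t)=2/(\pi t\,|H_n^{(1)}(t)|^2)$, which you might state explicitly rather than leave as an $O(Y_n^{-2})$ remark. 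Since the paper offers no independent argument, there is nothing to compare your route against beyond noting that it is presumably close in spirit to what the cited reference does.
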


Using the DtN map, we can impose the following TBC for the scattered field
\ben
T_{\lambda_0,\mu_0}u^{sc}=\mathcal{B}u^{sc} \quad\mbox{on}\quad\G_R.
\enn
Note that
\ben
T_{\lambda_0,\mu_0}u=\mathcal{B}u+T_{\lambda_0,\mu_0}u^{in}-\mathcal{B}u^{in}\quad\mbox{on}\quad\G_R.
\enn
Then the original scattering problem is equivalently reduced to the following nonlocal boundary value problem
\be
\label{total}
\nabla\cdot \sigma_{q_\lambda,q_\mu}(u)+\rho\omega^2 u=0&&\quad\mbox{in}\quad B_R,\\
\label{TBC}
T_{\lambda_0,\mu_0}u-\mathcal{B}u-g =0 &&\quad\mbox{on}\quad\G_R,
\en
where $g:=T_{\lambda_0,\mu_0}u^{in}-\mathcal{B}u^{in}$. Then the variational formulation of (\ref{total})-(\ref{TBC}) reads as follows: find $u=(u_1,u_2)^\top\in (H^1(B_R))^2$ such that
\be
\label{variational}
a_{q}(u,v)-\int_{\G_R} \mathcal{B}u\cdot\overline{v}\,ds=\int_{\G_R} g\cdot\overline{v}\,ds\quad\mbox{for all}\quad v=(v_1,v_2)^\top\in (H^1(B_R))^2,
\en
where the sesquilinear form $a_q(\cdot,\cdot): (H^1(B_R))^2\times (H^1(B_R))^2\rightarrow \C $ is defined by
\be
\label{sesquilinear}
a_{q}(u,v):=A_{q_\lambda}(u,v)+B_{q_\mu}(u,v)+ C_{q_\rho}(u,v)
\en
and
\ben
A_{q_\lambda}(u,v)&=&\int_{B_R}\lambda_0(1+q_\lambda)(\nabla\cdot u)(\nabla\cdot\ov{v})dx,\\ B_{q_\mu}(u,v)&=&2\int_{B_R}\mu_0(1+q_\mu)\,\mathcal{E}(u):\mathcal{E}(\ov{v})dx,\\
C_{q_\rho}(u,v)&=&-\int_{B_R}\rho_0(1+q_\rho)\omega^2\, u\cdot\ov{v}dx.
\enn
\textcolor{rot1}{The double dot notation appeared in $B_{q_\mu}$ is understood in the following way. If tensors ${\bf A}$ and ${\bf B}$ have rectangular Cartesian components $a_{ij}$ and $b_{ij}$, $i,j=1,2$, respectively, then the double contraction of ${\bf A}$ and ${\bf B}$ is
\ben
{\bf A}:{\bf B}=\sum\limits_{i = 1}^2 {\sum\limits_{j = 1}^2 {{a_{ij}}{b_{ij}}}}.
\enn}The \textcolor{rot1}{well-posedness} of the variational formulation (\ref{variational}) is a consequence of the following theorem.

\begin{theorem}
\label{wellposedness}
For any $f\in L^2(B_R)^2$ and $g\in H^{-1/2}(\G_R)^2$, there exists a unique weak solution $u\in H^1(B_R)^2$ to the boundary value problem
\be
\label{BVP1}
\nabla\cdot \sigma_{q_\lambda,q_\mu}(u)+\rho\omega^2 u=f&&\quad\mbox{in}\quad B_R,\\
\label{BVP2}
Tu-\mathcal{B}u-g =0 &&\quad\mbox{on}\quad\G_R,
\en
and
\be
\label{solestimate}
\|u\|_{H^1(B_R)^2}\le \gamma_q^{-1}(\|f\|_{L^2(B_R)^2}+\|g\|_{H^{-1/2}(\G_R)^2}),
\en
where $\gamma_q>c_0>0$ is a constant independent of $u$.
\end{theorem}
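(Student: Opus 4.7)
The plan is to recast (2.9)--(2.10) as the operator equation associated with the sesquilinear form
\[
b_q(u,v):=a_q(u,v)-\langle\mathcal{B}u,v\rangle_{\Gamma_R}, \qquad u,v\in H^1(B_R)^2,
\]
and to apply the Fredholm alternative after isolating a coercive principal part and a compact remainder. First I would split the DtN series (2.4) as $\mathcal{B}=\mathcal{B}_N+\mathcal{B}_N^c$, where $\mathcal{B}_N$ retains the finitely many modes with $|n|<N$ and $\mathcal{B}_N^c$ keeps $|n|\ge N$. By Lemma \ref{positive}, choosing $N$ large enough guarantees $-\mathrm{Re}\,\langle\mathcal{B}_N^c\varphi,\varphi\rangle_{\Gamma_R}\ge 0$ for every $\varphi\in H^{1/2}(\Gamma_R)^2$, while $\mathcal{B}_N$ is of finite rank and therefore induces a compact operator on $H^1(B_R)^2$ through the compact trace $H^1(B_R)\hookrightarrow H^{1/2}(\Gamma_R)$.

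Next I would write $b_q=b_1+b_2$ with
\[
b_1(u,v):=A_{q_\lambda}(u,v)+B_{q_\mu}(u,v)+\tau(u,v)_{L^2(B_R)^2}-\langle\mathcal{B}_N^c u,v\rangle_{\Gamma_R},
\]
\[
b_2(u,v):=C_{q_\rho}(u,v)-\tau(u,v)_{L^2(B_R)^2}-\langle\mathcal{B}_N u,v\rangle_{\Gamma_R},
\]
for a constant $\tau>0$ chosen later. Using the uniform bounds $1+q_\lambda,1+q_\mu\ge 1+C_1>0$ from the standing Assumption together with Korn's second inequality, one obtains
\[
\mathrm{Re}\,b_1(u,u)\ \ge\ c_1\bigl(\|\mathcal{E}(u)\|_{L^2(B_R)}^2+\|u\|_{L^2(B_R)^2}^2\bigr)\ \ge\ c_2\|u\|_{H^1(B_R)^2}^2
\]
for $\tau$ large enough, so $b_1$ is coercive and defines an invertible operator $\mathbb A\colon H^1(B_R)^2\to(H^1(B_R)^2)^\ast$ by Lax--Milgram. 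The remainder $b_2$ induces a compact operator $\mathbb K$ via the Rellich--Kondrachov embedding $H^1(B_R)\hookrightarrow L^2(B_R)$ and the finite-rank $\mathcal{B}_N$. Hence solving (2.11) is equivalent to $(\mathbb A+\mathbb K)u=F$ for a bounded right-hand side $F$ depending linearly and continuously on $(f,g)$, and the Fredholm alternative reduces existence to uniqueness.

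For uniqueness I would assume $f=0$, $g=0$ and extend any solution $u$ to all of $\R^2$ by setting $u:=v^{sc}$ outside $B_R$, where $v^{sc}$ solves (2.6)--(2.7) with $w=u|_{\Gamma_R}$. The TBC $T_{\lambda_0,\mu_0}u=\mathcal{B}u$ forces the traction to match across $\Gamma_R$, so the extension is a radiating solution of the full inhomogeneous Navier system with vanishing incident field. Testing (2.11) against $u$ itself, taking imaginary parts, and observing that $A_{q_\lambda}(u,u), B_{q_\mu}(u,u), C_{q_\rho}(u,u)$ are all real, yields $\mathrm{Im}\,\langle\mathcal{B}u,u\rangle_{\Gamma_R}=0$. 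Inserting the Fourier representation (2.4)--(2.5) and using the far-field asymptotics encoded in $\alpha_n(t_p),\alpha_n(t_s)$ shows that this imaginary part equals a positive linear combination of $|u_p^{sc,\infty}|^2$ and $|u_s^{sc,\infty}|^2$ on $\Gamma_1$; hence both far-field patterns vanish. Rellich's lemma applied separately to the Helmholtz equations satisfied by $u_p^{sc}$ and $u_s^{sc}$ then gives $u\equiv0$ in $\R^2\setminus\overline{B_R}$, and the unique continuation principle for the Lam\'e system propagates this to $u\equiv0$ in $B_R$.

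Uniqueness together with the Fredholm framework produces existence of a solution and the bound (2.12) via the open mapping theorem, with $\gamma_q$ depending on $\omega$, $(\lambda_0,\mu_0,\rho_0)$, $R$, and the constants $C_1,C_2$ but independent of $u$ and of the data. The step I expect to be the main obstacle is the sign analysis in the uniqueness argument: one must extract from the matrix $W_n$ (or equivalently from the Kupradze energy identity for the $p$/$s$ decomposition) an explicit non-negative expression in the pressure and shear far-field coefficients, since $\mathcal{B}$ is not normal and its imaginary part mixes the two wave types through the off-diagonal entries $W_n^{(1,2)}$, $W_n^{(2,1)}$.
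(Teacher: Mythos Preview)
Your proposal is correct and follows essentially the same route as the paper: split $\mathcal{B}$ into a finite-rank piece plus a tail with positive-definite real part (Lemma~\ref{positive}), use Korn's inequality to get coercivity of the principal part, and invoke the Fredholm alternative together with a Rellich-type uniqueness argument. The only cosmetic differences are that the paper outsources the uniqueness step to an external reference (Lemma~2.14 in \cite{BHSY}) rather than sketching the far-field/unique-continuation argument you outline, and derives the stability bound via the Babu\v{s}ka inf--sup condition rather than the open mapping theorem.
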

\begin{proof}
We know from Lemma \ref{positive} that $-\mbox{Re}(W_n)$ is positive definite for large $|n|$. The operator \textcolor{rot1}{$-\mathcal{B}$} can be decomposed into the sum of \textcolor{rot1}{an operator $\mathcal{B}_1$ whose real part is positive definite} and a finite rank operator $\mathcal{B}_2$ from $H^{1/2}(\G_R)^2$ to $H^{-1/2}(\G_R)^2$. Define the sesquilinear form $A(\cdot,\cdot): H^1(B_R)^2\times H^1(B_R)^2\rightarrow \C $ as
\ben
A(u,v)=a_{q}(u,v)-\int_{\G_R} \mathcal{B}u\cdot\overline{v}\,ds.
\enn
\textcolor{rot1}{Then the variational equation corresponding to (\ref{BVP1})-(\ref{BVP2}) reads:
\be
\label{variational0}
A(u,v)=\int_{\G_R} \mathcal{B}g\cdot\overline{v}\,ds-\int_{B_R}f\cdot\ov{v}\,ds, \quad\forall v\in H^1(B_R)^2.
\en}
We split the sequilinear form $A$ into the sum $A=A_1+A_2$, where
\ben
A_1(u,v)&=&\int_{B_R}\left[\lambda_0(1+q_\lambda)(\nabla\cdot u)(\nabla\cdot\ov{v}) +2\mu_0(1+q_\mu)\,\mathcal{E}(u):\mathcal{E}(\ov{v})
 +u\cdot\ov{v}\right] dx\\
&\quad& +\int_{\G_R} \mathcal{B}_1u\cdot\overline{v}\,ds,\\
A_2(u,v)&=&-\int_{B_R}\left[\rho_0(1+q_\rho)\omega^2+1\right]u\cdot\ov{v} dx+\int_{\G_R} \mathcal{B}_2u\cdot\overline{v}\,ds.
\enn
Recalling the Korn's inequality (see e.g. \cite{HKR00}), we have
\ben
\mbox{Re}\,A_1(v,v)\ge c_1\|v\|_{H^1(B_R)^2}^2\quad\mbox{for all}\quad v\in H^1(B_R)^2,
\enn
with some constant $c_1>0$. Moreover, applying the Cauchy-Schwarz inequality yields
\ben
\mbox{Re}\,A_2(v,v)\ge -c_2\|v\|_{L^2(B_R)^2}^2 +\mbox{Re}\langle\mathcal{B}_2v,v\rangle_{\Gamma_R}\quad\mbox{for all}\quad v\in H^1(B_R)^2,
\enn
for some constant $c_2>0$. From the compact imbedding $H^1(B_R)\hookrightarrow L^2(B_R)$ and the compactness of $\mathcal{B}_2$, we conclude that the sesquilinear form $A$ is strongly elliptic \textcolor{rot1}{(see Definition \ref{def:stronglyelliptic})} over $H^1(B_R)^2\times H^1(B_R)^2$. The sesquilinear form $A$ obviously generates a continuous linear operator $\mathcal{A}: H^1(B_R)^2\rightarrow (H^1(B_R)^2)'$ such that
\ben
A(u,v)=\langle\mathcal{A}u,v\rangle\quad\mbox{for all}\quad v\in H^1(B_R)^2.
\enn
Here $(H^1(B_R)^2)'$ denotes the dual space of $H^1(B_R)^2$ with respect to the duality $\langle\cdot,\cdot\rangle$ extending the $L^2$ scalar product in $L^2(B_R)^2$. We know from the Rellich's lemma in elasticity (see Lemma 2.14 in \cite{BHSY}) that the homogeneous operator equation $\mathcal{A}u=0$ has only the trivial solution $u=0$. Then it follows from the Fredholm alternative that the variational formulation
(\ref{variational}) is uniquely solvable. Finally, the inf-sup condition
\be
\label{infsup}
\sup_{0\ne v\in (H^1(B_R))^2}\frac{|A(u,v)|}{\|v\|_{H^1(B_R)^2}}\ge \gamma_q\|u\|_{H^1(B_R)^2}\quad\mbox{for all}\quad u\in H^1(B_R)^2,
\en
with some constant $\gamma_q>0$ generated from the general theory in Babu\v{s}ka and Aziz\cite{BA72} implies the estimate (\ref{solestimate}). \textcolor{rot1}{In fact, for $F\in (H^1(B_R)^2)'$, considering the operator equation $\mathcal{A}u=F$ related to the variational equation (\ref{variational0}), it follows from the Babu\v{s}ka's theory that the operator equation is well-posed if and only if the conditions
\ben
\inf_{0\ne u\in (H^1(B_R))^2}\sup_{0\ne v\in (H^1(B_R))^2}\frac{|A(u,v)|}{\|u\|_{H^1(B_R)^2}\|v\|_{H^1(B_R)^2}}=C_U>0,
\enn
and
\ben
\inf_{0\ne v\in (H^1(B_R))^2}\sup_{0\ne u\in (H^1(B_R))^2}\frac{|A(u,v)|}{\|u\|_{H^1(B_R)^2}\|v\|_{H^1(B_R)^2}}=C_E>0
\enn
hold and they are equivalent to the uniqueness and existence of solution of the operator equation, respectively. Then it follows from the variational equation (\ref{variational0}) and trace theorem that
\ben
\gamma_q\|u\|_{H^1(B_R)^2} &\le& \sup_{0\ne v\in (H^1(B_R))^2}\frac{|A(u,v)|}{\|v\|_{H^1(B_R)^2}}\\
&\le& \sup_{0\ne v\in (H^1(B_R))^2} \frac{\|g\|_{H^{-1/2}(\Gamma_R)^2} \|v\|_{H^{1/2}(\Gamma_R)^2}+\|f\|_{L^2(B_R)^2}\|v\|_{L^2(B_R)^2}} {\|v\|_{H^1(B_R)^2}}\\
&\le& \sup_{0\ne v\in (H^1(B_R))^2} \frac{\|g\|_{H^{-1/2}(\Gamma_R)^2} \|v\|_{H^1(B_R)^2}+\|f\|_{L^2(B_R)^2}\|v\|_{H^1(B_R)^2}} {\|v\|_{H^1(B_R)^2}}\\
&=& \|g\|_{H^{-1/2}(\Gamma_R)^2}+\|f\|_{L^2(B_R)^2},
\enn
which completes the proof.}
\end{proof}

\begin{definition}
\label{def:stronglyelliptic}
\textcolor{rot1}{A bounded sesquilinear form $a(\cdot,\cdot)$ {on} some Hilbert space $X$ is called strongly elliptic if there
exists a compact form $q(\cdot,\cdot)$ such that}
\ben
\textcolor{rot1}{|\real\, a(u,u)|\geq C\,||u||^2_{X}-q(u,u) \qquad\mbox{for all}\quad u\in X.}
\enn
\end{definition}

\subsection{Near-field scattering map}
\label{sec:2.2}

For given perturbed parameters $q\in\mathcal{K}$, we define the scattering operator $S:\mathcal{K}\rightarrow H^1(B_R)^2$ by $S(q)=u$, where $u\in H^1(B_R)^2$ is the unique weak solution of (\ref{total})-(\ref{TBC}). It is easily seen that the map $S$ is nonlinear with respect to $q$. A direct application of Theorem \ref{wellposedness} gives the following result.

\begin{lemma}
\label{lemma.F1}
The map $S$ is bounded with the estimate
\ben
\|S(q)\|_{H^1(B_R)^2}\le \gamma_q^{-1}\|g\|_{(H^{-1/2}(\G_R))^2}.
\enn
where $c>0$ is a constant.
\end{lemma}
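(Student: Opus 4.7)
The plan is to recognize that Lemma \ref{lemma.F1} is essentially a direct corollary of Theorem \ref{wellposedness}. By the definition of the scattering operator, $u = S(q)$ is precisely the unique weak solution of the nonlocal boundary value problem (\ref{total})--(\ref{TBC}), which coincides with the problem (\ref{BVP1})--(\ref{BVP2}) in the special case where the volume source term vanishes, i.e.\ $f = 0$, and where the boundary datum is $g = T_{\lambda_0,\mu_0}u^{in} - \mathcal{B}u^{in}$.

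Accordingly, I would invoke the a priori estimate (\ref{solestimate}) from Theorem \ref{wellposedness}, specializing it to $f = 0$. This immediately yields
\[
\|S(q)\|_{H^1(B_R)^2} = \|u\|_{H^1(B_R)^2} \le \gamma_q^{-1}\bigl(\|0\|_{L^2(B_R)^2} + \|g\|_{H^{-1/2}(\Gamma_R)^2}\bigr) = \gamma_q^{-1}\|g\|_{H^{-1/2}(\Gamma_R)^2},
\]
which is the claimed bound. Since the heavy lifting (unique solvability via strong ellipticity plus Fredholm alternative, and the derivation of the inf-sup constant $\gamma_q > c_0 > 0$ via Babu\v{s}ka's theory) is already carried out in Theorem \ref{wellposedness}, there is no genuine obstacle here; the only care needed is to note that $g = T_{\lambda_0,\mu_0}u^{in} - \mathcal{B}u^{in}$ indeed belongs to $H^{-1/2}(\Gamma_R)^2$, which follows from the trace theorem applied to the smooth incident field $u^{in}$ together with the mapping property $\mathcal{B}: H^{1/2}(\Gamma_R)^2 \to H^{-1/2}(\Gamma_R)^2$ established via the DtN representation (\ref{DtN}).
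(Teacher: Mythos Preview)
Your proposal is correct and matches the paper's approach exactly: the paper states that Lemma~\ref{lemma.F1} follows as ``a direct application of Theorem~\ref{wellposedness},'' which is precisely what you do by specializing (\ref{solestimate}) to the case $f=0$.
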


\begin{lemma}
\label{lemma.F2}
Given the perturbed parameters $q_l\in\mathcal{K}$, $l=1,2$, we have the estimate
\ben
\|S(q_1)- S(q_2)\|_{H^1(B_R)^2}&\le& c\gamma_{q_1}^{-1}\|q_1-q_2\|_{L^\infty(B_R)^3} \|u_2\|_{(H^1(B_R))^2},
\enn
where $u_2$ is the the unique weak solution of the problem (\ref{total})-(\ref{TBC}) with perturbed parameters $q_2$ and $c>0$ is a constant.
\end{lemma}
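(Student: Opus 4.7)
Let me set up notation: write $u_l := S(q_l)$ for $l=1,2$, and let $w := u_1 - u_2$. My plan is to derive a variational equation for $w$ with the sesquilinear form associated to $q_1$, bound the right-hand side by $\|q_1-q_2\|_{L^\infty} \|u_2\|_{H^1}$, and then conclude via the inf-sup condition (\ref{infsup}) that appeared at the end of the proof of Theorem \ref{wellposedness}.

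\textbf{Step 1: Equation for $w$.} Both $u_1$ and $u_2$ satisfy (\ref{variational}) with the same $g$, so subtracting and inserting/removing $a_{q_1}(u_2,v)$ yields
\ben
a_{q_1}(w,v) - \int_{\G_R}\mathcal{B}w\cdot\ov v\,ds = -\bigl[a_{q_1}(u_2,v) - a_{q_2}(u_2,v)\bigr]
\enn
for every $v\in H^1(B_R)^2$. Using the decomposition (\ref{sesquilinear}) of $a_q$ into $A_{q_\lambda}$, $B_{q_\mu}$, $C_{q_\rho}$, the right-hand side rewrites as
\ben
-\int_{B_R}\bigl[\lambda_0(q_{1,\lambda}-q_{2,\lambda})(\divv u_2)(\divv\ov v) + 2\mu_0(q_{1,\mu}-q_{2,\mu})\mathcal{E}(u_2):\mathcal{E}(\ov v) - \rho_0\omega^2(q_{1,\rho}-q_{2,\rho})u_2\cdot\ov v\bigr]dx.
\enn

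\textbf{Step 2: Bound the right-hand side.} Apply Cauchy--Schwarz to each of the three integrals and use $\|\divv u_2\|_{L^2}$, $\|\mathcal{E}(u_2)\|_{L^2}$, $\|u_2\|_{L^2}\le \|u_2\|_{H^1(B_R)^2}$, with the analogous bounds for $v$. Collecting constants depending only on $\lambda_0$, $\mu_0$, $\rho_0$, $\omega$, we obtain
\ben
\Bigl|a_{q_1}(u_2,v)-a_{q_2}(u_2,v)\Bigr|\le c\,\|q_1-q_2\|_{L^\infty(B_R)^3}\,\|u_2\|_{H^1(B_R)^2}\,\|v\|_{H^1(B_R)^2}.
\enn

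\textbf{Step 3: Apply inf-sup.} Denote by $A(\cdot,\cdot)$ the full sesquilinear form from the proof of Theorem \ref{wellposedness} with coefficients $q_1$, i.e.\ $A(u,v)=a_{q_1}(u,v)-\int_{\G_R}\mathcal{B}u\cdot\ov v\,ds$. By the inf-sup estimate (\ref{infsup}) applied with parameter $q_1$,
\ben
\gamma_{q_1}\|w\|_{H^1(B_R)^2}\le \sup_{0\ne v\in H^1(B_R)^2}\frac{|A(w,v)|}{\|v\|_{H^1(B_R)^2}}\le c\,\|q_1-q_2\|_{L^\infty(B_R)^3}\,\|u_2\|_{H^1(B_R)^2},
\enn
which is the claimed bound after dividing by $\gamma_{q_1}$.

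The only subtle point is that the right-hand side of the equation for $w$ is not of the form $\int f\cdot\ov v+\int g\cdot\ov v\,ds$ covered directly by Theorem \ref{wellposedness}, so one cannot invoke the solution estimate (\ref{solestimate}) as a black box; instead, one must go back to the inf-sup inequality (\ref{infsup}) which bounds $\|w\|_{H^1}$ in terms of an arbitrary antilinear functional on $H^1(B_R)^2$. Everything else is routine Cauchy--Schwarz once the equation for $w$ has been put in the correct form.
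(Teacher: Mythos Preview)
Your proof is correct and follows essentially the same route as the paper: derive the variational identity for the difference $w$, recognize the right-hand side as the ``difference'' sesquilinear form applied to $u_2$ (the paper writes this compactly as $-a_{q_2-q_1-1}(u_2,v)$), bound it by $c\|q_1-q_2\|_{L^\infty}\|u_2\|_{H^1}\|v\|_{H^1}$, and finish with the inf-sup condition (\ref{infsup}). Your remark that one must invoke (\ref{infsup}) directly rather than the black-box estimate (\ref{solestimate}) is well taken and matches what the paper does implicitly.
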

\begin{proof}
Let $u_1,u_2\in H^1(B_R)^2$ be the unique weak solutions of the problem (\ref{total})-(\ref{TBC}) with perturbed parameters $q_1$ and $q_2$, respectively. Set $w=u_2-u_1$.
Then we have,
\ben
a_{q_1}(w,v)-\int_{\G_R} \mathcal{B}w\cdot\overline{v}\,ds= -a_{q_2-q_1-1}(u_2,v).
\enn
Then the the inf-sup condition (\ref{infsup}) together with the boundedness of $a_{q_2-q_1-1}$ implies the desired estimate.
\end{proof}

For any $\delta q:=(\delta q_\lambda, \delta q_\mu, \delta q_\rho)^\top\in \mathcal{K}$, assume that $w\in H^1(B_R)^2$ is the unique weak solution of the following variational problem
\ben
a_{q}(w,v)-\int_{\G_R} \mathcal{B}w\cdot\overline{v}\,ds &=& -a_{\delta q-1}(u,v)\quad\mbox{for all}\quad v\in H^1(B_R)^2.
\enn
Let the map $\mathcal{T}_{q}: \mathcal{K}\rightarrow H^1(B_R)^2$ be such that
\ben
\mathcal{T}_{q}(\delta q)&=&w.
\enn

\begin{lemma}
\label{lemma.F3}
Given the perturbed parameters $q,\delta q\in\mathcal{K}$, we have the estimate
\ben
\|S(q+\delta q)-S(q) -\mathcal{T}_{q}(\delta q)\|_{H^1(B_R)^2}\le C\|\delta q\|_{L^\infty(B_R)^3}^2 \|g\|_{H^{-1/2}(\G_R)^2},
\enn
where $C>0$ is a constant.
\end{lemma}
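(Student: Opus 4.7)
The plan is to identify $w=\mathcal{T}_q(\delta q)$ as the Fr\'echet derivative of $S$ at $q$ in the direction $\delta q$, and to obtain the quadratic remainder by subtracting the variational equations for $\widetilde{u}-u$ and $w$, where $u=S(q)$ and $\widetilde{u}=S(q+\delta q)$. The key observation is that the linearization $w$ is anchored at $u$, whereas the ``true'' difference equation for $\widetilde{u}-u$ (of the type derived in the proof of Lemma \ref{lemma.F2}) is anchored at $\widetilde{u}$; the mismatch in the anchor is precisely what produces the extra factor of $\|\delta q\|_{L^\infty}$.

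First I would record the two variational identities. Following the manipulation in the proof of Lemma \ref{lemma.F2} verbatim (with $q_1=q$, $q_2=q+\delta q$), the difference $\widetilde{u}-u$ satisfies
\[
a_q(\widetilde{u}-u,v)-\int_{\G_R}\mathcal{B}(\widetilde{u}-u)\cdot\overline{v}\,ds=-a_{\delta q-1}(\widetilde{u},v)\qquad\forall v\in H^1(B_R)^2,
\]
while by definition $w$ satisfies the analogous identity with $\widetilde{u}$ replaced by $u$ on the right. Subtracting these two identities, the error $e:=\widetilde{u}-u-w$ satisfies
\[
a_q(e,v)-\int_{\G_R}\mathcal{B}e\cdot\overline{v}\,ds=-a_{\delta q-1}(\widetilde{u}-u,v)\qquad\forall v\in H^1(B_R)^2.
\]
The right-hand side is now a product of two ``small'' quantities, which is the whole point.

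Next I would invoke the inf-sup condition (\ref{infsup}) from Theorem \ref{wellposedness} applied to this equation for $e$, together with the obvious bound $|a_{\delta q-1}(\widetilde{u}-u,v)|\le C\|\delta q\|_{L^\infty(B_R)^3}\|\widetilde{u}-u\|_{H^1(B_R)^2}\|v\|_{H^1(B_R)^2}$ coming straight from the definition of the sesquilinear form. This yields
\[
\|e\|_{H^1(B_R)^2}\le C\gamma_q^{-1}\|\delta q\|_{L^\infty(B_R)^3}\|\widetilde{u}-u\|_{H^1(B_R)^2}.
\]
Finally I would chain the two previous lemmas: Lemma \ref{lemma.F2} gives $\|\widetilde{u}-u\|_{H^1(B_R)^2}\le c\,\gamma_{q+\delta q}^{-1}\|\delta q\|_{L^\infty(B_R)^3}\|u\|_{H^1(B_R)^2}$, and Lemma \ref{lemma.F1} gives $\|u\|_{H^1(B_R)^2}\le\gamma_q^{-1}\|g\|_{H^{-1/2}(\G_R)^2}$. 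Substituting these back produces the claimed $\|\delta q\|_{L^\infty}^2\|g\|_{H^{-1/2}}$ bound, with the constant $C$ absorbing the uniform lower bound on $\gamma_q$ over $\mathcal{K}$.

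The main obstacle, and the only thing that requires care, is the bookkeeping of the anchor point in the linearization, i.e.\ realizing that the natural difference equation produces $a_{\delta q-1}(\widetilde{u},v)$ rather than $a_{\delta q-1}(u,v)$, so that after subtracting the defining equation of $w$ one obtains $a_{\delta q-1}(\widetilde{u}-u,v)$ on the right-hand side. Once this is recognized, everything else is a direct application of the inf-sup estimate and the two previous lemmas. One should also note that the constants $\gamma_q,\gamma_{q+\delta q}$ stay bounded away from zero uniformly on $\mathcal{K}$ (which is used implicitly already in Lemmas \ref{lemma.F1}--\ref{lemma.F2}), so they can be absorbed into $C$.
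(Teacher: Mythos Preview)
Your proposal is correct and follows essentially the same route as the paper's proof: derive the variational equation for the remainder $e=\widetilde{u}-u-w$ with right-hand side $-a_{\delta q-1}(\widetilde{u}-u,v)$, apply the inf-sup bound (\ref{infsup}), and then chain Lemmas \ref{lemma.F2} and \ref{lemma.F1}. The only cosmetic difference is that the paper bounds $\|\widetilde{u}-u\|$ in terms of $\|\widetilde{u}\|=\|S(q+\delta q)\|$ whereas you bound it in terms of $\|u\|=\|S(q)\|$, but this is just the symmetric application of Lemma \ref{lemma.F2} and makes no difference once the $\gamma$-constants are absorbed.
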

\begin{proof}
Let $u_1,u_2\in H^1(B_R)^2$ be the unique weak solutions of the problem (\ref{total})-(\ref{TBC}) with perturbed parameters $q$ and $q+\delta q$, respectively. Let $\widetilde{u}=u_2-u_1$, it follows that
\textcolor{rot1}{\ben
a_{q}(\widetilde{u}-w,v)-\int_{\G_R} \mathcal{B}(\widetilde{u}-w)\cdot\overline{v}\,ds&=& -a_{\delta q-1}(\widetilde{u},v) \quad\mbox{for all}\quad v\in (H^1(B_R))^2.
\enn}
Then we have the estimate
\ben
\|\textcolor{rot1}{\widetilde{u}}-w\|_{H^1(B_R)^2}&\le& c\|\delta q\|_{L^\infty(B_R)^3} \|\widetilde{u}\|_{(H^1(B_R))^2}\\
&\le& c\gamma_q^{-1}\|\delta q\|_{L^\infty(B_R)^3}^2 \|u_2\|_{(H^1(B_R))^2}\\
&\le& c\gamma_q^{-1}\gamma_{q+\delta_q}^{-1}\|\delta q\|_{L^\infty(B_R)^3}^2 \|g\|_{(H^{-1/2}(\G_R))^2}\\
&\le& C\|\delta q\|_{L^\infty(B_R)^3}^2 \|g\|_{(H^{-1/2}(\G_R))^2},
\enn
where $C>0$ is a constant defined as
\ben
C:=cc_0^2,\quad c_0:=\sup_{q,\delta q\in\mathcal{K}}\{\gamma_q^{-1},\gamma_{q+\delta q}^{-1}\}<\infty.
\enn
\end{proof}

Let $\gamma: H^1(B_R)^2\rightarrow H^{1/2}(\G_R)^2$ be the trace operator to the boundary $\G_R$ and define the near-field scattering map $N$ as $N(q)=\gamma S(q)$.  By combining Lemmas \ref{lemma.F1}-\ref{lemma.F3}, we arrive at the following theorem.
\begin{theorem}
The near-field scattering map $N$ is Fr\'echet differentiable with respect to $q$ and its Fr\'echet derivative is $N'_{q}=\gamma\mathcal{T}_q$.
\end{theorem}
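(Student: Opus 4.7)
The plan is to derive Fréchet differentiability of $N$ essentially for free from the work already done in Lemmas \ref{lemma.F1}--\ref{lemma.F3}. Since $N = \gamma \circ S$ and $\gamma: H^1(B_R)^2 \to H^{1/2}(\Gamma_R)^2$ is a bounded linear operator (the standard trace), differentiability of $N$ at $q$ reduces to differentiability of $S$ at $q$, together with an identification of the derivative.

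First I would verify that the candidate derivative $\mathcal{T}_q : \mathcal{K} \to H^1(B_R)^2$, defined via the variational problem
\begin{equation*}
a_q(\mathcal{T}_q(\delta q), v) - \int_{\Gamma_R} \mathcal{B}\,\mathcal{T}_q(\delta q)\cdot \overline{v}\, ds = -a_{\delta q - 1}(u, v), \quad \forall v \in H^1(B_R)^2,
\end{equation*}
extends to a \emph{bounded linear} operator from $L^\infty(B_R)^3$ into $H^1(B_R)^2$. Linearity in $\delta q$ is immediate from linearity of $\delta q \mapsto a_{\delta q - 1}(u, \cdot)$ in its parameter (the shift by $-1$ is a fixed element and drops out when one takes the difference of the problem for $\delta q$ and for $0$, or alternatively one redefines the test by incorporating the constant part), and the inf-sup condition (\ref{infsup}) combined with Theorem \ref{wellposedness} yields $\|\mathcal{T}_q(\delta q)\|_{H^1(B_R)^2} \le c\,\gamma_q^{-1}\,\|\delta q\|_{L^\infty(B_R)^3}\,\|u\|_{H^1(B_R)^2}$.

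Next, I would invoke Lemma \ref{lemma.F3}, which already provides the quadratic remainder estimate
\begin{equation*}
\|S(q+\delta q) - S(q) - \mathcal{T}_q(\delta q)\|_{H^1(B_R)^2} \le C\,\|\delta q\|_{L^\infty(B_R)^3}^{2}\,\|g\|_{H^{-1/2}(\Gamma_R)^2}.
\end{equation*}
Applying the bounded trace $\gamma$ to both sides, with norm $\|\gamma\|$, yields
\begin{equation*}
\|N(q+\delta q) - N(q) - \gamma \mathcal{T}_q(\delta q)\|_{H^{1/2}(\Gamma_R)^2} \le \|\gamma\|\,C\,\|\delta q\|_{L^\infty(B_R)^3}^{2}\,\|g\|_{H^{-1/2}(\Gamma_R)^2}.
\end{equation*}
Dividing through by $\|\delta q\|_{L^\infty(B_R)^3}$ and letting $\|\delta q\|_{L^\infty(B_R)^3} \to 0$, the right-hand side vanishes, which is precisely the definition of Fréchet differentiability of $N$ at $q$ with derivative $N'_q = \gamma\,\mathcal{T}_q$.

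There is no substantive obstacle left: Lemma \ref{lemma.F3} already did the nontrivial work (controlling the residual of the first-order expansion in $H^1$ by $\|\delta q\|^2$), and Theorem \ref{wellposedness} supplies the bounds needed to see that $\mathcal{T}_q$ is the right linear candidate. The only point requiring mild care is the uniformity of the constant $C$ in a neighborhood of $q$, which has already been handled by the definition of $c_0 = \sup_{q,\delta q \in \mathcal{K}} \max\{\gamma_q^{-1}, \gamma_{q+\delta q}^{-1}\} < \infty$ used in the proof of Lemma \ref{lemma.F3}; this ensures the remainder estimate is valid uniformly for all sufficiently small admissible perturbations, which is what Fréchet differentiability requires.
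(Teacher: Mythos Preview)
Your proposal is correct and follows exactly the route the paper intends: the theorem is stated there simply as a consequence of combining Lemmas \ref{lemma.F1}--\ref{lemma.F3} with the bounded trace $\gamma$, and your write-up just makes that combination explicit. One small clarification: the map $\delta q \mapsto a_{\delta q - 1}(u,\cdot)$ is genuinely linear (not merely affine), since the coefficient $1+(\delta q - 1)=\delta q$ appears in each integrand, so no subtraction of a constant part is needed.
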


\section{Inverse medium problem}
\label{sec:3}

In this section, we consider the inverse medium scattering problem \textcolor{rot1}{of reconstructing} the unknown perturbed elastic parameters and develop a Landweber iterative method. Assume that the total-field data $u$ is available over a range of frequencies $\omega\in[\omega_{min},\omega_{max}]$ which can be divided into $\omega_{min}=\omega_1<\omega_2<\cdots<\omega_{N-1}<\omega_N=\omega_{max}$ and over a range of incident directions $\theta^{in}\in[\theta_{min},\theta_{max}]$ which can be divided into $\theta_{min}=\theta_1<\theta_2<\cdots<\theta_{M-1}<\theta_M=\theta_{max}$. Let $u^{i,j}:=u(x,\omega_i,\theta_j)|_{\G_R}$ be the unique solution of the direct scattering problem with $\omega=\omega_i$ and $\theta^{in}=\theta_j$. Consider the following inverse medium problem:

{\bf (IP)}: Given the elastic parameters $\lambda_0,\mu_0$ and $\rho_0$ of the background medium, reconstruct the perturbed Lam\'e parameters $q_\lambda,q_\mu$ and perturbed density $q_\rho$ from the multi-frequency measurements $u^{i,j}$, $i=1,\cdots,N$, $j=1,\cdots,M$.

The inverse problem {\bf (IP)} can be formulated as: {\it Given $\lambda_0,\mu_0$ and $\rho_0$, find $q_\lambda,q_\mu$ and $q_\rho$ such that}
\be
\label{OE}
N(q)=u^{i,j},\;q=\{q_{\lambda},q_{\mu},q_{\rho}\}\quad\mbox{for}\quad i=1,\cdots,N, \;j=1,\cdots,M.
\en
In particular, the nonlinearity and ill-posedness of the inverse problem cause mathematical challenges from both theoretical and computational points of view. The nonlinearity leads to a nonconvex optimization problem, and the ill-posedness requires certain form of regularization to get a reasonable approximation. Here, to solve the operator equation (\ref{OE}) we apply the Landweber iteration method taking the form (\cite{HNS95})
\ben
q_{k+1}=q_k+\alpha (N_{q_k}')^*(N(q)-N(q_k)),\quad k=0,1,2,\cdots,
\enn
where $\alpha$ is the step size parameter.

\subsection{Multi-frequency iterative algorithm}
\label{sec:3.1}

For the adjoint of the operator $N_q'$, we have the following result.
\begin{theorem}
\label{adjointProblem}
Let $u\in H^1(B_R)^2$ be the unique weak solution of (\ref{total})-(\ref{TBC}). Then for any $h\in H^{1/2}(\G_R)^2$,
\ben
(N_q')^*(h)= \left\{-\lambda_0(\nabla\cdot\ov{u}) (\nabla\cdot\varphi),\, -2\mu_0\mathcal{E}(\ov{u}):\mathcal{E}(\varphi),\, \rho_0\textcolor{rot1}{\omega^2}\ov{u}\cdot\varphi\right\},
\enn
where $\varphi\in H^1(B_R)^2$ is the unique weak solution of the following boundary value problem
\be
\label{ad1}
\nabla\cdot\sigma_{q_{\lambda},q_{\mu}}(\ov{\varphi})+ \rho_0(1+q_{\rho})\omega^2\ov{\varphi} &=& 0\quad\mbox{in}\quad B_R,\\
\label{ad2}
T_{\lambda_0,\mu_0}\ov{\varphi}-\mathcal{B}\ov{\varphi} &=& \ov{h}\quad\mbox{on}\quad\G_R.
\en
\end{theorem}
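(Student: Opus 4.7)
The plan is a reciprocity argument. Let $w := \mathcal{T}_q(\delta q)$, so that $N'_q(\delta q) = \gamma w$ and, by the definition of the adjoint,
\begin{equation*}
\langle\delta q,(N'_q)^* h\rangle = \langle N'_q(\delta q),h\rangle_{\Gamma_R} = \int_{\Gamma_R} w\cdot\bar h\,ds.
\end{equation*}
The task is to rewrite this boundary integral as a volume integral linear in $\delta q$ and read off the three components of $(N'_q)^* h$.

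I would introduce the conjugated adjoint state $\psi := \bar\varphi$; since every elastic coefficient is real, conjugating (\ref{ad1})-(\ref{ad2}) shows that $\psi$ satisfies a nonlocal Navier problem of exactly the same form as the scattered field, namely
\begin{equation*}
\nabla\cdot\sigma_{q_\lambda,q_\mu}(\psi) + \rho_0(1+q_\rho)\omega^2\psi = 0\;\;\text{in }B_R,\qquad T_{\lambda_0,\mu_0}\psi - \mathcal{B}\psi = \bar h\;\;\text{on }\Gamma_R.
\end{equation*}
The strong form of the sensitivity equation, obtained by formally differentiating (\ref{total})-(\ref{TBC}) in the direction $\delta q$, reads $\nabla\cdot\sigma_{q_\lambda,q_\mu}(w) + \rho_0(1+q_\rho)\omega^2 w = -\nabla\cdot\sigma_{\delta q_\lambda,\delta q_\mu}(u) - \rho_0\delta q_\rho\omega^2 u$ in $B_R$ together with $T_{\lambda_0,\mu_0}w - \mathcal{B}w = 0$ on $\Gamma_R$. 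Applying Betti's reciprocity formula to the pair $(w,\psi)$, the $\rho_0(1+q_\rho)\omega^2$ contributions from the two PDEs cancel, and integrating by parts on the source $\nabla\cdot\sigma_{\delta q_\lambda,\delta q_\mu}(u)$ (with no boundary correction, since $\mathrm{supp}\,\delta q \subset B_R$) reduces the volume side of the identity to
\begin{equation*}
\int_{B_R}\bigl[\lambda_0\delta q_\lambda(\nabla\cdot u)(\nabla\cdot\psi) + 2\mu_0\delta q_\mu\,\mathcal{E}(u):\mathcal{E}(\psi) - \rho_0\delta q_\rho\omega^2\, u\cdot\psi\bigr]\,dx.
\end{equation*}

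On the boundary side, substituting the two transparent conditions yields $\int_{\Gamma_R}[(\mathcal{B}w)\cdot\psi - (\mathcal{B}\psi)\cdot w]\,ds - \int_{\Gamma_R}\bar h\cdot w\,ds$, and the bracketed term vanishes by the bilinear reciprocity $\int_{\Gamma_R}(\mathcal{B}w)\cdot\psi\,ds = \int_{\Gamma_R}(\mathcal{B}\psi)\cdot w\,ds$, the bilinear companion of (\ref{adjointDtN}) that follows from the Fourier representation (\ref{DtN}) of $\mathcal{B}$ upon swapping the two angular integration variables and invoking the identity $W_{-n}=W_n^\top$ already derived en route to that lemma. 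Equating the two sides of Betti's formula then gives $\int_{\Gamma_R} w\cdot\bar h\,ds$ as the negative of the displayed volume integral; taking complex conjugates (valid because $\delta q$ is real and $\bar\psi = \varphi$) converts every factor $u$ into $\bar u$, and matching coefficients of $\delta q_\lambda,\delta q_\mu,\delta q_\rho$ in $\langle\delta q,(N'_q)^* h\rangle = \int\delta q\cdot\overline{(N'_q)^* h}\,dx$ recovers the stated formula for $(N'_q)^* h$. The main obstacle is precisely this boundary step: one must carefully distinguish the bilinear reciprocity of $\mathcal{B}$ from the sesquilinear identity $\mathcal{B}^*\varphi = \overline{\mathcal{B}\bar\varphi}$ recorded in (\ref{adjointDtN}), and scrupulously track which quantities carry a complex conjugate throughout the manipulation.
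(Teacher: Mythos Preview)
Your argument is correct and is essentially the paper's proof in bilinear (Betti) rather than sesquilinear (variational) dress: the paper simply takes $v=\varphi$ as test function in the weak equation defining $w$, moves the DtN term via $\mathcal{B}^*\varphi=\overline{\mathcal{B}\bar\varphi}$, and then invokes the weak form of (\ref{ad1})--(\ref{ad2}) with test function $\bar w$, which is line for line your Betti identity for the pair $(w,\bar\varphi)$ combined with the bilinear symmetry $\int_{\Gamma_R}(\mathcal{B}w)\cdot\psi\,ds=\int_{\Gamma_R}(\mathcal{B}\psi)\cdot w\,ds$. The only cosmetic difference is that by staying purely variational the paper never writes the strong form of the sensitivity equation and hence sidesteps the regularity caveats implicit in your integration-by-parts step.
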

\begin{proof}
For any $\delta q\in \mathcal{K}$, let $w\in (H^1(B_R))^2$ be the unique weak solution of the following variational problem
\ben
a_{q}(w,v)-\int_{\G_R} \mathcal{B}w\cdot\overline{v}\,ds &=& -a_{\delta q-1}(u,v)\quad\mbox{for all}\quad v\in (H^1(B_R))^2.
\enn
Replacing $v$ by $\varphi$, we obtain
\ben
&\quad& A_{\delta q_\lambda-1}(u,\varphi)+B_{\delta q_\mu-1}(u,\varphi)- \int_{B_R} \rho_0\delta q_{\rho}\omega^2 u\cdot\ov{\varphi}dx\\
&=& -A_{q_\lambda}(w,\varphi)- B_{q_\mu}(w,\varphi)+\int_{B_R} \rho_0(1+q_{\rho})\omega^2 w\cdot\ov{\varphi}dx+ \langle\mathcal{B}w,\varphi\rangle_{-1/2,1/2}\\
&=& -A_{q_\lambda}(w,\varphi)- B_{q_\mu}(w,\varphi)+\int_{B_R} \rho_0(1+q_{\rho})\omega^2 w\cdot\ov{\varphi}dx+ \langle w,\mathcal{B}^*\varphi\rangle_{1/2,-1/2}\\
&=& -A_{q_\lambda}(w,\varphi)- B_{q_\mu}(w,\varphi)+\int_{B_R} \rho_0(1+q_{\rho})\omega^2 w\cdot\ov{\varphi}dx+ \langle w,\ov{\mathcal{B}\ov{\varphi}}\rangle_{1/2,-1/2}\\
&=& -\langle w,h\rangle_{1/2,-1/2}\\
&=& -\langle N'_q(\delta q),h)\rangle_{1/2,-1/2}\\
&=& -\int_{B_R}\delta q\cdot\ov{(N'_q)^*h}dx
\enn
in which we have used the relation (\ref{adjointDtN}) and $\langle\cdot,\cdot\rangle_{s,-s}$ denotes the $L^2$ duality pairing between $H^{s}(\G)^2$ and $H^{-s}(\G)^2$. Since it holds for any $\delta q\in \mathcal{K}$, we complete the proof.
\end{proof}

Denote
\ben
q_{i,j,l}=\{q_{\lambda}^{i,j,l}, q_{\mu}^{i,j,l}, q_{\rho}^{i,j,l}\},
\enn
\textcolor{rot1}{where the index $i,j,l$ are related to the frequency $\omega_i$, the incident direction $\theta_j$ and the current inner Landweber iteration number.} Given initial guesses \textcolor{rot1}{$q_{1,1,0}=0$}, we now describe a procedure that determines a better approximation \textcolor{rot1}{$q_{i,j}:=q_{i,j,L}$} at the frequency $\omega=\omega_i$ with incident direction $\theta^{in}=\theta_j$ for $i=1,\cdots,N$, $j=1,\cdots,M$ in an increasing manner. For each $i,j$, we apply $L$ steps of Landweber iterations, i.e., $l=1,\cdots,L$. For fixed $i,j$, suppose now that an approximation of the scatterer $q_{i,j,l-1}$ has been recovered. For the recovered scatterer $q_{i,j,l-1}$, we solve at $\omega=\omega_i$ and $\theta^{in}=\theta_j$ the direct problem
\ben
\label{rl11}
\nabla\cdot\sigma_{q_{\lambda}^{i,j,l-1},q_{\mu}^{i,j,l-1}} (\widetilde{u}^{i,j,l})+ \rho_0(1+q_{\rho}^{i,j,l-1})\omega_i^2\widetilde{u}^{i,j,l} &=&0\quad\mbox{in}\quad B_R,\\
\label{rl12}
T_{\lambda_0,\mu_0}\widetilde{u}^{i,j,l}- \mathcal{B}\widetilde{u}^{i,j,l}-g&=&0\quad\mbox{on}\quad\G_R.
\enn
Then for any $h\in H^{1/2}(\G_R)^2$,
\ben
&\quad&(N_{q_{i,j,l-1}}')^*(h)\\
&=& \left\{-\lambda_0(\nabla\cdot\ov{\widetilde{u}^{i,j,l}}) (\nabla\cdot\varphi^{i,j,l}),\, -2\mu_0\mathcal{E}(\ov{\widetilde{u}^{i,j,l}}):\mathcal{E}(\varphi^{i,j,l}),\, \rho_0\omega^2_i\ov{\widetilde{u}^{i,j,l}}\cdot\varphi^{i,j,l}\right\},
\enn
where $\varphi^{i,j,l}\in H^1(B_R)^2$ is the unique weak solution of the following boundary value problem
\ben
\label{rl21}
\nabla\cdot\sigma_{q_{\lambda}^{i,j,l-1},q_{\mu}^{i,j,l-1}} (\ov{\varphi^{i,j,l}})+ \rho_0(1+q_{\rho}^{i,j,l-1})\omega_i^2\ov{\varphi^{i,j,l}} &=& 0\quad\mbox{in}\quad B_R,\\
\label{rl22}
T_{\lambda_0,\mu_0}\ov{\varphi^{i,j,l}}- \mathcal{B}\ov{\varphi^{i,j,l}} &=& \ov{h}\quad\mbox{on}\quad\G_R.
\enn
Then the Landweber iteration leads to
\ben
q_{i,j,l}=q_{i,j,l-1}+\alpha (N_{q_{i,j,l-1}}')^*(u^{i,j}-\widetilde{u}^{i,j,l}),\quad l=1,\cdots,N.
\enn
Note that the elastic parameters are all real values. Therefore, at each step of iterations, we apply a simple regularization as
\ben
q_{i,j,l}\leftarrow \mbox{Re}\{q_{i,j,l}\}.
\enn

The multi-frequency iterative algorithm for solving the inverse medium scattering is summarized \textcolor{rot1}{in Algorithm 3.1}.

{\bf Algorithm 3.1 (Multi-frequency iterative algorithm).}
\begin{itemize}
\item Collect the near-field data over all frequencies $\om_i$, $i=1,\cdots,N$ and all incident directions $d_j$, $j=1,\cdots,M$.
\item Set initial approximations $q_{1,1,0}=0$.
\item Apply the following iteration:\\
DO $i=1,\cdots,N$\\
DO $j=1,\cdots,M$\\
DO $l=1,\cdots,L$\\
Update the elastic parameters by the formula
    \ben
    q_{i,j,l}=q_{i,j,l-1}+\alpha\mbox{Re}\left\{\begin{bmatrix}
    -\lambda_0(\nabla\cdot\ov{\widetilde{u}^{i,j,l-1}}) (\nabla\cdot\varphi^{i,j,l-1}) \\ -2\mu_0\mathcal{E}(\ov{\widetilde{u}^{i,j,l-1}}):\mathcal{E}(\varphi^{i,j,l-1}) \\ \rho_0\omega^2_i\ov{\widetilde{u}^{i,j,l-1}}\cdot\varphi^{i,j,l-1}
    \end{bmatrix}\right\}
    \enn
ENDDO\\
Set $q_{i,j+1,0}=q_{i,j,L}$\\
ENDDO\\
Set $q_{i+1,1,0}=q_{i,M,L}$\\
ENDDO
\end{itemize}

\subsection{Multifrequency iterative algorithm from phaseless data}
\label{sec:3.2}

We now consider the reconstruction from phaseless data. Define the phaseless near-field scattering map $F: \mathcal{K}\rightarrow H^{1/2}(\G_R)$ by
\ben
F(q)=\left|u|_{\G_R}\right|^2 =\ov{N(q)}\cdot N(q).
\enn

\begin{lemma}
The phaseless near-field scattering map $F$ is Fr\'echet differentiable with respect to $q$ and its Fr\'echet derivative is given by
\ben
F'_{q}(\cdot)=2\,\mbox{Re} \left\{\ov{N(q)}\cdot N'_{q}(\cdot) \right\}.
\enn
\end{lemma}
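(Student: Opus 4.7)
The plan is to apply the product rule directly to the algebraic identity $F(q)=\overline{N(q)}\cdot N(q)$, leveraging the Fr\'echet differentiability of $N$ proved in the preceding theorem. As a first step, I would use the standard bilinear add-and-subtract splitting
\begin{equation*}
F(q+\delta q)-F(q) = \overline{N(q+\delta q)-N(q)}\cdot N(q+\delta q) + \overline{N(q)}\cdot\bigl(N(q+\delta q)-N(q)\bigr),
\end{equation*}
which isolates the two first-order contributions in $\delta q$.

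Next, I would substitute the expansion $N(q+\delta q)-N(q) = N'_q(\delta q)+r(\delta q)$, where $\|r(\delta q)\|_{H^{1/2}(\G_R)^2}=o(\|\delta q\|_{L^\infty(B_R)^3})$ by the previous theorem, and in the first term replace $N(q+\delta q)=N(q)+O(\|\delta q\|)$ using Lemma \ref{lemma.F2} composed with the trace operator. The cross-term thereby produced is $O(\|\delta q\|^2)$ and so absorbed in the remainder. Combining yields
\begin{equation*}
F(q+\delta q)-F(q) = \overline{N'_q(\delta q)}\cdot N(q)+\overline{N(q)}\cdot N'_q(\delta q)+o(\|\delta q\|) = 2\,\mbox{Re}\bigl\{\overline{N(q)}\cdot N'_q(\delta q)\bigr\}+o(\|\delta q\|),
\end{equation*}
which is exactly the asserted formula for $F'_q$. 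The linearity and boundedness of $\delta q\mapsto 2\,\mbox{Re}\{\overline{N(q)}\cdot N'_q(\delta q)\}$ from $\mathcal{K}$ into $H^{1/2}(\G_R)$ follow from the corresponding properties of $N'_q=\gamma\mathcal{T}_q$.

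The main obstacle is not the calculation but the functional-analytic bookkeeping: one must check that all products land in the target space $H^{1/2}(\G_R)$ and that the remainder is genuinely $o(\|\delta q\|)$ in the $H^{1/2}$-norm rather than merely in $L^2$. For this I would rely on standard pointwise multiplier estimates for $H^{1/2}(\G_R)$ applied to boundary traces of $H^1(B_R)^2$-functions, combined with the a priori bound $\|S(q)\|_{H^1(B_R)^2}\le\gamma_q^{-1}\|g\|_{H^{-1/2}(\G_R)^2}$ from Lemma \ref{lemma.F1} and the Lipschitz estimate of Lemma \ref{lemma.F2}, which together provide uniform control on $N(q+\delta q)$ in a neighborhood of $q$. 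With these ingredients the remainder estimates transfer cleanly from the $H^{1/2}$-convergence of $N$ to the $H^{1/2}$-convergence of $F$.
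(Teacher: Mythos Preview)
The paper states this lemma without proof, evidently regarding it as an immediate consequence of the product rule for Fr\'echet derivatives applied to $F(q)=\ov{N(q)}\cdot N(q)$. Your argument is correct and supplies exactly the details one would fill in: the bilinear add-and-subtract splitting, the substitution of the first-order expansion of $N$, and the identification of the remainder as $o(\|\delta q\|)$. Your concern about the multiplier issue in $H^{1/2}(\G_R)$ is in fact more careful than anything the paper addresses; note that the paper later observes (Section~\ref{sec:3.3}) that interior elliptic regularity near $\G_R$ gives $u\in H^{3/2}(\G_R)^2$, which makes the product estimates immediate since $H^{3/2}$ on a one-dimensional curve is a Banach algebra.
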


The adjoint of $F'_{q}$ is given in the following Theorem.
\begin{theorem}
Let $u\in H^1(B_R)^2$ be the unique weak solution of (\ref{total})-(\ref{TBC}). Then for any $\hbar\in H^{1/2}(\G_R)$,
\ben
(F_q')^*(\hbar)= 2\mbox{Re}\left\{-\lambda_0(\nabla\cdot\ov{u}) (\nabla\cdot\psi),\, -2\mu_0\mathcal{E}(\ov{u}):\mathcal{E}(\psi),\, \rho_0\omega^2\ov{u}\cdot\psi\right\},
\enn
where $\psi\in H^1(B_R)^2$ is the unique weak solution of the following boundary value problem
\be
\label{ad3}
\nabla\cdot\sigma_{q_{\lambda},q_{\mu}}(\ov{\varphi})+ \rho_0(1+q_{\rho})\omega^2\ov{\varphi} &=& 0\quad\mbox{in}\quad B_R,\\
\label{ad4}
T_{\lambda_0,\mu_0}\ov{\varphi}-\mathcal{B}\ov{\varphi} &=& \ov{\hbar u}\quad\mbox{on}\quad\G_R.
\en
\end{theorem}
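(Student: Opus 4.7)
The plan is to reduce the claim to the already-proved Theorem \ref{adjointProblem}, using the chain-rule identity $F'_q(\cdot)=2\,\real\{\ov{N(q)}\cdot N'_q(\cdot)\}$ from the preceding lemma together with the fact that $N(q)$ is the trace of $u$ on $\G_R$. Thus the phaseless adjoint should factor through the vectorial adjoint already computed, with the scalar test function $\hbar$ combined with the background state $u$ to form a vector-valued boundary datum.

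First, I would fix $\delta q\in\mathcal{K}$ and $\hbar\in H^{1/2}(\G_R)$ (real-valued, which is the natural setting since $F(q)$ is pointwise real, and the general complex case splits into two such contributions) and rewrite the boundary pairing as
\ben
\langle F'_q(\delta q),\hbar\rangle_{\G_R}
=2\,\real\int_{\G_R} N'_q(\delta q)\cdot\ov{u\hbar}\,ds
=2\,\real\,\langle N'_q(\delta q),\,u\hbar\rangle_{\G_R}.
\enn
This isolates exactly the quantity to which Theorem \ref{adjointProblem} applies, with the vector-valued boundary datum $h:=u\hbar\in H^{1/2}(\G_R)^2$.

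Second, I would invoke Theorem \ref{adjointProblem} with this choice of $h$. Its adjoint state $\varphi$ is characterized by the BVP (\ref{ad1})--(\ref{ad2}) with right-hand side $\ov h=\ov{u\hbar}$ on $\G_R$, which is precisely the system (\ref{ad3})--(\ref{ad4}) defining $\psi$; hence $\psi=\varphi$, and well-posedness of this adjoint system follows at once from Theorem \ref{wellposedness}. Substituting the explicit formula for $(N'_q)^*(u\hbar)$ into the previous identity, and using that $\delta q$ is real-valued (so the $2\,\real$ can be moved inside the $B_R$-integral), I would read off each of the three components of $(F'_q)^*(\hbar)$ by matching the test perturbation $\delta q$.

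The only remaining step is a cosmetic one: the computation naturally produces the combination $(u,\ov\psi)$ in the three entries, whereas the statement uses $(\ov u,\psi)$; these agree under $2\,\real$ because $\real\{z\}=\real\{\ov z\}$. The main—and only—obstacle is the bookkeeping of complex conjugations when transporting the real part between the boundary and volume integrals and when invoking the identity $\mathcal{B}^*\varphi=\ov{\mathcal{B}\ov{\varphi}}$ implicitly through Theorem \ref{adjointProblem}; beyond this there is no new analytic content, since everything is a direct transcription from the vectorial case already handled.
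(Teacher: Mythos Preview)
Your proposal is correct and is precisely in the spirit of the paper's own treatment: the paper's proof reads in full ``The proof is similar as Theorem \ref{adjointProblem} and we omit it here.'' Your reduction---pairing $F'_q(\delta q)$ with $\hbar$, recognizing the result as $2\,\real\,\langle N'_q(\delta q),\,u\hbar\rangle_{\G_R}$, and then invoking Theorem \ref{adjointProblem} with $h=u\hbar$ so that the adjoint BVP (\ref{ad1})--(\ref{ad2}) becomes exactly (\ref{ad3})--(\ref{ad4})---is the natural way to unpack that sentence, and your remarks on moving $2\,\real$ through the real-valued $\delta q$ and on the harmless conjugation swap $(\ov u,\psi)\leftrightarrow(u,\ov\psi)$ address the only bookkeeping points.
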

\begin{proof}
The proof is similar as Theorem \ref{adjointProblem} and we omit it here.
\end{proof}

We now describe the Landweber iterative algorithm based on phaseless data. Use the same notations in section \ref{sec:3.1}. For any $\hbar\in H^{1/2}(\G_R)$, let $\psi^{i,j,l}\in H^1(B_R)^2$ be the unique weak solution of the following boundary value problem
\ben
\label{rl31}
\nabla\cdot\sigma_{q_{\lambda}^{i,j,l-1},q_{\mu}^{i,j,l-1}} (\ov{\psi^{i,j,l}})+ \rho_0(1+q_{\rho}^{i,j,l-1})\omega_i^2\ov{\psi^{i,j,l}} &=& 0\quad\mbox{in}\quad B_R,\\
\label{rl32}
T_{\lambda_0,\mu_0}\ov{\psi^{i,j,l}}- \mathcal{B}\ov{\psi^{i,j,l}} &=& \ov{\hbar \textcolor{rot1}{\widetilde{u}}^{i,j,l}}\quad\mbox{on}\quad\G_R.
\enn
Then the Landweber iteration leads to
\ben
q_{i,j,l}=q_{i,j,l-1}+\alpha (F_{q_{i,j,l-1}}')^*(|u^{i,j}|^2-|\widetilde{u}^{i,j,l}|_{\G_R}|^2),\quad l=1,\cdots,N.
\enn
We conclude the algorithm for solving the inverse medium scattering from phaseless data is summarized \textcolor{rot1}{in Algorithm 3.2}.

{\bf Algorithm 3.2 (Multi-frequency iterative algorithm for phaseless data).}
\begin{itemize}
\item Collect the near-field data over all frequencies $\om_i$, $i=1,\cdots,N$ and all incident directions $d_j$, $j=1,\cdots,M$.
\item Set initial approximations $q_{1,1,0}=0$.
\item Apply the following iteration:\\
DO $i=1,\cdots,N$\\
DO $j=1,\cdots,M$\\
DO $l=1,\cdots,L$\\
Update the elastic parameters by the formula
    \ben
    q_{i,j,l}=q_{i,j,l-1}+2\alpha\mbox{Re}\left\{\begin{bmatrix}
    -\lambda_0(\nabla\cdot\ov{\widetilde{u}^{i,j,l-1}}) (\nabla\cdot\psi^{i,j,l-1}) \\ -2\mu_0\mathcal{E}(\ov{\widetilde{u}^{i,j,l-1}}):\mathcal{E}(\psi^{i,j,l-1}) \\ \rho_0\omega^2_i\ov{\widetilde{u}^{i,j,l-1}}\cdot\psi^{i,j,l-1}
    \end{bmatrix}\right\}
    \enn
ENDDO\\
Set $q_{i,j+1,0}=q_{i,j,L}$\\
ENDDO\\
Set $q_{i+1,1,0}=q_{i,M,L}$\\
ENDDO
\end{itemize}

\subsection{\textcolor{rot1}{Convergence}}
\label{sec:3.3}

\textcolor{rot1}{In this section, we briefly discuss the convergence of the proposed algorithms based on the classical analysis of Landweber iteration method for nonlinear ill-posed problems. For more detailed analysis of the Landweber iteration and its modification form, we refer to \cite{HNS95,KNS} and the references therein. It should be pointed out that although we apply the multi-frequency strategy in this paper, it is extremely difficult to investigate the dependence of the convergence of the algorithms on the number and interval of the selected frequencies. For the corresponding analysis of the inverse medium scattering problems in acoustics that take a similar form as the special case discussed in section \ref{sec:4}, we refer to \cite{BT10}.}

\textcolor{rot1}{We consider the following operator equation
\be
\label{InverseOE}
F(q,\omega)=y(\omega),\quad F:\mathcal{D}(F)\times [\omega_{min},\omega_{max}]\rightarrow Y,
\en
where $\mathcal{D}(F)\subset X$ and $X,Y$ are all Hilbert spaces with inner products $(\cdot,\cdot)_X,(\cdot,\cdot)_Y$ and norms $\|\cdot\|_X,\|\cdot\|_Y$, respectively. In particular, $X=L^2(B_R)^3,Y=H^{1/2}(\Gamma_R)^2$ in this paper. As mentioned above, the operator equation (\ref{InverseOE}) is strictly nonlinear and ill-posed. For simplicity, denote $F(q)=F(q,\omega)$. Then the nonlinear Landweber iteration takes the form
\be
\label{exact}
q_{k+1}=q_k+\alpha (F_{q_k}')^*(y-F(q_k)),\quad k=0,1,2,\cdots,
\en
for exact data $y$ and the form
\be
\label{inexact}
q_{k+1}^\delta=q_k^\delta+\alpha (F_{q_k^\delta}')^*(y^\delta-F(q_k^\delta)),\quad k=0,1,2,\cdots,
\en
for inexact data $y^\delta$ satisfying $\sup_{\omega\in[\omega_{min},\omega_{max}]} \{\|y^\delta-y\|_Y\}\le\delta$. Here, assume that we take the frequency as the innermost loop.}

\textcolor{rot1}{Let $\mathcal{B}_r(q_0)$ denote a closed ball of radius $r$ around $q_0$. Assume that $\alpha>0$ is sufficiently small such that $\alpha\|F_q'\|<1$ in $\mathcal{B}_{2r}(q_0)$. For simplicity, we consider $\alpha=1$ and $\|F_q'\|<1$. Resulting from the regularity theory, we can obtain that $u\in H_{loc}^2(\R^2\backslash \ov{B_{R-\epsilon}})^2$, for any small $\epsilon>0$ such that $\mbox{sup}\{q\}\subset B_{R-\epsilon}$, which implies that $u\in H^{3/2}(\Gamma_R)^2\hookrightarrow H^{1/2}(\Gamma_R)^2$. Thus, the operator $F$ here is compact. According to the analysis in Lemma \ref{lemma.F3} and inverse trace theorem, let $r>0$ and $q,\widetilde{q}\in \mathcal{B}_{2r}(q_0)\subset \mathcal{D}(F)$ satisfying $\|q-\widetilde{q}\|_{L^\infty(B_R)^3}\le\epsilon$ with sufficiently small $\epsilon>0$, it holds that
\ben
\|F(q)-F(\widetilde{q})-F_q'(q-\widetilde{q})\|_Y\le \eta\|F(q)-F(\widetilde{q})\|_Y, \quad 0<\eta<c_0<\frac{1}{2}.
\enn
}

\textcolor{rot1}{The convergence of the Landweber iteration is given in the following theorem, see Corollary 2.3 and Theorem 2.4 in \cite{KNS}.
\begin{theorem}
Assume that $F(q)=y$ is solvable in $\mathcal{B}_r(q_0)$. Then the nonlinear Landweber iteration (\ref{exact}) converges to a solution of $F(q)=y$. Furthermore, if $\mathcal{N}(F_{q^\dagger}')\subset\mathcal{N}(F_q')$ for all $q\in\mathcal{B}_r(q^\dagger)$, then $q_k$ converges to $q^\dagger$ as $k\rightarrow+\infty$.
\end{theorem}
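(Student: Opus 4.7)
The plan is to invoke the classical convergence theory for the nonlinear Landweber iteration developed by Hanke, Neubauer and Scherzer in \cite{HNS95} and summarized in \cite{KNS}. All hypotheses are already assembled in the paragraph preceding the theorem: the tangential cone condition
\[
\|F(q)-F(\widetilde{q})-F_q'(q-\widetilde{q})\|_Y\le \eta\|F(q)-F(\widetilde{q})\|_Y,\quad 0<\eta<1/2,
\]
holds on $\mathcal{B}_{2r}(q_0)$, together with the scaling $\|F_q'\|\le 1$ on the same ball (after absorbing $\alpha$).

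I would first establish the monotonicity estimate
\[
\|q_{k+1}-q^{*}\|_X^2-\|q_k-q^{*}\|_X^2\le -2(1-2\eta)\|y-F(q_k)\|_Y^2
\]
for any solution $q^{*}\in\mathcal{B}_r(q_0)$ of $F(q)=y$. Expanding the square via the update (\ref{exact}) gives
\[
\|q_{k+1}-q^{*}\|_X^2-\|q_k-q^{*}\|_X^2=2\bigl(y-F(q_k),\,F_{q_k}'(q^{*}-q_k)\bigr)_Y+\|(F_{q_k}')^{*}(y-F(q_k))\|_X^2,
\]
and the linear term splits as $2(y-F(q_k),F(q^{*})-F(q_k))_Y+2(y-F(q_k),F_{q_k}'(q^{*}-q_k)-(F(q^{*})-F(q_k)))_Y$. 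Bounding the second piece by the tangential cone condition and controlling the quadratic remainder with $\|F_{q_k}'\|\le 1$ delivers the claimed estimate. Since $0<\eta<1/2$, every iterate stays in $\mathcal{B}_r(q^{*})\subset\mathcal{B}_{2r}(q_0)$, so the recursion is well defined, and telescoping yields $\sum_{k\ge 0}\|y-F(q_k)\|_Y^2<\infty$, hence $\|y-F(q_k)\|_Y\to 0$.

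The main obstacle is to upgrade residual convergence to Cauchy-ness of $(q_k)$ in $X$; because $F_q'$ may possess a large kernel, residual decay alone does not force the iterates to converge. Following the standard argument, for $m\ge\ell\ge k$ I would pick $\ell$ minimizing $\|y-F(q_j)\|_Y$ over $j\in\{k,\dots,m\}$ and decompose
\[
\|q_m-q_k\|_X^2=2(q_\ell-q_k,\,q_\ell-q_m)_X+\|q_m-q_\ell\|_X^2-\|q_\ell-q_k\|_X^2.
\]
Writing each consecutive difference as $q_{j+1}-q_j=(F_{q_j}')^{*}(y-F(q_j))$ converts the relevant inner products into expressions of the form $(y-F(q_j),F_{q_j}'(q_\ell-q_k))_Y$ and $(y-F(q_j),F_{q_j}'(q_\ell-q_m))_Y$, both of which the tangential cone condition bounds by a constant multiple of $(\|y-F(q_j)\|_Y+\|y-F(q_\ell)\|_Y)\|y-F(q_j)\|_Y$. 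Summing over $j\in\{k,\dots,m-1\}$, invoking the extremal choice of $\ell$, and using the residual summability already proved forces the right-hand side to zero as $k\to\infty$. Thus $(q_k)$ is Cauchy and converges to some $q^{\star}\in\mathcal{B}_r(q_0)$; continuity of $F$ (in fact the compactness noted just before the theorem) gives $F(q^{\star})=y$.

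For the second assertion, the null space inclusion $\mathcal{N}(F_{q^{\dagger}}')\subset\mathcal{N}(F_q')$ for all $q\in\mathcal{B}_r(q^{\dagger})$ implies $\overline{\mathrm{range}(F_q')^{*}}=\mathcal{N}(F_q')^{\perp}\subset\mathcal{N}(F_{q^{\dagger}}')^{\perp}$, so every increment $q_{k+1}-q_k=(F_{q_k}')^{*}(y-F(q_k))$ lies in $\mathcal{N}(F_{q^{\dagger}}')^{\perp}$. Starting from $q_0-q^{\dagger}\in\mathcal{N}(F_{q^{\dagger}}')^{\perp}$ (arranged by the choice of initial guess), an induction yields $q_k-q^{\dagger}\in\mathcal{N}(F_{q^{\dagger}}')^{\perp}$ for all $k$, and passing to the limit gives $q^{\star}-q^{\dagger}\in\mathcal{N}(F_{q^{\dagger}}')^{\perp}$. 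Since the tangential cone condition forces any two solutions of $F(q)=y$ in $\mathcal{B}_r(q^{\dagger})$ to differ by an element of $\mathcal{N}(F_{q^{\dagger}}')$, the limit $q^{\star}$ must coincide with $q^{\dagger}$, completing the proof.
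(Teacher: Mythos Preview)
Your proposal is correct and follows exactly the route the paper itself takes: the paper does not give an independent proof but simply refers the reader to Corollary~2.3 and Theorem~2.4 in \cite{KNS}, and the argument you sketch is precisely the Hanke--Neubauer--Scherzer proof recorded there. Two small remarks: in your monotonicity expansion the linear term should carry a minus sign (so the bound is $-(1-2\eta)\|y-F(q_k)\|_Y^2$ rather than $-2(1-2\eta)\|\cdot\|_Y^2$), and the hypothesis $q_0-q^{\dagger}\in\mathcal{N}(F_{q^{\dagger}}')^{\perp}$ you invoke in the second part is not an extra assumption but the defining property of $q^{\dagger}$ as the $q_0$--minimum-norm solution, which the paper (following \cite{KNS}) leaves implicit.
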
}

\textcolor{rot1}{In case of inexact data, the iteration procedure should be combined with a stopping rule in order to act as a regularization method. For example, for each frequency, one can employ the {\it discrepancy principle}, i.e., the iteration is stopped after $k'$ steps with
\ben
\|y^\delta-F(q_{k'}^\delta)\|_Y \le\tau\delta< \|y^\delta-F(q_k^\delta)\|_Y,
\enn
where $\tau$ is an appropriately chosen positive number. Before stating the convergence of the iteration (\ref{inexact}) for inexact data case, we need to discuss the selection of $\tau$. Note that the parameter $\eta$ must be dependent on $\omega$.
\begin{proposition}
Assume $F(q)=y$ has a solution $q_*\in\mathcal{B}_r(q_0)$ and $q_k^\delta\in \mathcal{B}_r(q_*)$.
\begin{itemize}
\item[(i).] A sufficient condition for $q_{k+1}^\delta$ generated from (\ref{inexact}) to be a better approximation of $q_*$ than $q_k^\delta$ is that
\ben
\|y^\delta-F(q_k^\delta)\|_Y >2\frac{1+\eta_0}{1-2\eta_0}\delta,\quad \eta_0=\sup_{\omega\in[\omega_{min},\omega_{max}]}\{\eta\}
\enn
This further leads to the selection of $\tau$ as
\ben
\tau>2\frac{1+\eta_0}{1-2\eta_0}.
\enn
\item[(ii).] At each frequency, let $k'$ be chosen according to the stopping rule, we have
\ben
k'(\tau\delta)^2<\sum_{k=0}^{k'-1}\|y^\delta-F(q_k^\delta)\|_Y^2\le \frac{\tau}{(1-2\eta_0)\tau-2(1+\eta_0)}\|q_0-q_*\|_X^2.
\enn
\end{itemize}
\end{proposition}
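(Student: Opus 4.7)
The plan is to derive a one-step monotonicity estimate and then read both assertions off it. I start from the identity
\begin{equation*}
\|q_{k+1}^\delta-q_*\|_X^2-\|q_k^\delta-q_*\|_X^2 = 2(q_{k+1}^\delta-q_k^\delta,q_k^\delta-q_*)_X+\|q_{k+1}^\delta-q_k^\delta\|_X^2.
\end{equation*}
Substituting the Landweber update $q_{k+1}^\delta-q_k^\delta=(F_{q_k^\delta}')^*(y^\delta-F(q_k^\delta))$ (with $\alpha=1$), moving the adjoint to the right slot, and invoking $\|F_q'\|<1$ to control the squared term by $\|y^\delta-F(q_k^\delta)\|_Y^2$ reduces everything to bounding the real inner product $(y^\delta-F(q_k^\delta),F_{q_k^\delta}'(q_k^\delta-q_*))_Y$.

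To handle that term, I would split
\begin{equation*}
F_{q_k^\delta}'(q_k^\delta-q_*) = \bigl(F(q_k^\delta)-F(q_*)\bigr)-\bigl(F(q_k^\delta)-F(q_*)-F_{q_k^\delta}'(q_k^\delta-q_*)\bigr).
\end{equation*}
The first piece, together with $F(q_*)=y$, produces $-\|y^\delta-F(q_k^\delta)\|_Y^2+(y^\delta-F(q_k^\delta),y^\delta-y)_Y$, and Cauchy--Schwarz with $\|y^\delta-y\|_Y\le\delta$ turns the latter into a $\delta\|y^\delta-F(q_k^\delta)\|_Y$ error. The second piece is exactly what the tangential cone condition stated just above the proposition controls, yielding at most $\eta_0\|y^\delta-F(q_k^\delta)\|_Y\|F(q_k^\delta)-F(q_*)\|_Y$; one more triangle inequality $\|F(q_k^\delta)-F(q_*)\|_Y\le\|y^\delta-F(q_k^\delta)\|_Y+\delta$ makes every constant explicit. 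Collecting everything, I expect the master inequality
\begin{equation*}
\|q_{k+1}^\delta-q_*\|_X^2-\|q_k^\delta-q_*\|_X^2 \le -(1-2\eta_0)\|y^\delta-F(q_k^\delta)\|_Y^2+2(1+\eta_0)\,\delta\,\|y^\delta-F(q_k^\delta)\|_Y.
\end{equation*}

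Assertion (i) is immediate: the right-hand side is strictly negative exactly when $\|y^\delta-F(q_k^\delta)\|_Y>\frac{2(1+\eta_0)}{1-2\eta_0}\delta$, which forces the discrepancy--principle threshold to satisfy $\tau>\frac{2(1+\eta_0)}{1-2\eta_0}$. For (ii), the strict inequality $\|y^\delta-F(q_k^\delta)\|_Y>\tau\delta$, valid for every $k<k'$, yields the lower bound $k'(\tau\delta)^2<\sum_{k=0}^{k'-1}\|y^\delta-F(q_k^\delta)\|_Y^2$ by direct summation. For the upper bound I substitute $\delta<\|y^\delta-F(q_k^\delta)\|_Y/\tau$ into the master inequality, telescope from $k=0$ to $k'-1$, discard the nonnegative term $\|q_{k'}^\delta-q_*\|_X^2$ on the left, and use $q_0^\delta=q_0$, obtaining the stated constant $\tau/[(1-2\eta_0)\tau-2(1+\eta_0)]$.

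The main bookkeeping obstacle is extracting the coefficients $1-2\eta_0$ and $1+\eta_0$ exactly as announced; in particular, since the tangential cone constant $\eta$ depends on the frequency $\omega$, one must replace it by $\eta_0=\sup_\omega\eta$ (which by hypothesis is still smaller than $1/2$) before applying the one-step estimate uniformly across iterations, so that the telescoping sum yields a stopping rule independent of $\omega$. The combination of the tangential cone bound with the triangle inequality at the right place is what keeps the $\eta_0$-dependence linear rather than quadratic and gives the sharp threshold above.
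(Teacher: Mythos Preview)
Your proposal is correct and follows essentially the same route as the paper: both arrive at the one-step estimate $\|q_{k+1}^\delta-q_*\|_X^2-\|q_k^\delta-q_*\|_X^2 \le \|y^\delta-F(q_k^\delta)\|_Y\bigl[2(1+\eta_0)\delta-(1-2\eta_0)\|y^\delta-F(q_k^\delta)\|_Y\bigr]$, then read off (i) from its sign and (ii) by substituting $\delta<\tau^{-1}\|y^\delta-F(q_k^\delta)\|_Y$ and telescoping. The only difference is that the paper quotes this master inequality directly from (2.13) in \cite{KNS}, whereas you supply the full derivation via the adjoint identity, the tangential-cone bound, and the triangle inequality---but the argument is the standard Hanke--Neubauer--Scherzer one in both cases.
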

\begin{proof}
The proof is similar to that of the Proposition 2.2 and Corollary 2.3 in \cite{KNS}. In fact, from (2.13) in \cite{KNS}, it holds that
\ben
&\quad& \|q_{k+1}^\delta-q_*\|_X-\|q_k^\delta-q_*\|_X \\
&\le& \|y^\delta-F(q_k^\delta)\|_Y \left(2\delta+2\eta\|y-F(q_k^\delta)\|_Y-\|y^\delta-F(q_k^\delta)\|_Y\right)\\
&\le& \|y^\delta-F(q_k^\delta)\|_Y \left(2\delta+2\eta_0\|y-F(q_k^\delta)\|_Y-\|y^\delta-F(q_k^\delta)\|_Y\right)\\
&\le& \|y^\delta-F(q_k^\delta)\|_Y \left[2(1+\eta_0)\delta -(1-2\eta)\|y^\delta-F(q_k^\delta)\|_Y \right]
\enn
which completes the proof of (i) under the sufficient condition. Furthermore, the special choice of $\tau$ implies that
\ben
&\quad& \|q_{k+1}^\delta-q_*\|_X-\|q_k^\delta-q_*\|_X \\
&\le& \left[2\tau^{-1}(1+\eta_0)+2\eta_0-1\right]\|y^\delta-F(q_k^\delta)\|_Y^2.
\enn
Adding up these inequalities for $k$ from 0 through $k'-1$ leads to
\ben
&\quad& \left[1-2\eta_0-2\tau^{-1}(1+\eta_0)\right] \sum_{k=0}^{k'-1}\|y^\delta-F(q_k^\delta)\|_Y^2\\
&\le& \|q_0-q_*\|_X^2-\|q_{k'}^\delta-q_*\|_X^2.
\enn
Then (ii) can be proved by combining this inequality and the stoping rule.
\end{proof}}

\textcolor{rot1}{Now we can have the following convergence result for inexact data, see Theorem 2.6 in \cite{KNS}.
\begin{theorem}
Assume that $F(q)=y$ is solvable in $\mathcal{B}_r(q_0)$ and let $k'$ be chosen according to the stopping rule. The nonlinear Landweber iteration (\ref{inexact}) converges to a solution of $F(q)=y$. If $\mathcal{N}(F_{q^\dagger}')\subset\mathcal{N}(F_q')$ for all $q\in\mathcal{B}_r(q^\dagger)$, then $q_{k'}^\delta$ converges to $q^\dagger$ as $\delta\rightarrow 0$.
\end{theorem}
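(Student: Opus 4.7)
The argument follows the standard regularization analysis of nonlinear Landweber iteration with the discrepancy principle, assembling the two ingredients already prepared in the excerpt: the monotonicity and finite-termination from the preceding proposition, and the convergence of the exact-data iteration from the preceding theorem. I would split the proof into the two assertions.

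\emph{Convergence to some solution.} Fix a noise sequence $\delta_n\to 0$ with data $y^{\delta_n}$ and stopping indices $k_n:=k'(\delta_n)$; part~(ii) of the preceding proposition shows $k_n<\infty$ for each $n$. I would distinguish two cases. If $\{k_n\}$ admits a bounded subsequence, one may assume (after extraction) $k_n\equiv k^*$, and induction on $j\le k^*$ using the continuity of $F$ and $F'_q$ in $q$ together with the iteration rule~(\ref{inexact}) yields $q_j^{\delta_n}\to q_j$, where $q_j$ is the exact-data iterate from~(\ref{exact}). Passing to the limit in the stopping inequality $\|y^{\delta_n}-F(q_{k^*}^{\delta_n})\|_Y\le\tau\delta_n$ gives $F(q_{k^*})=y$, so $q_{k_n}^{\delta_n}\to q_{k^*}$, a solution. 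If instead $k_n\to\infty$, the preceding theorem yields $q_j\to q_*$ for some solution $q_*$. Given $\varepsilon>0$, choose $j_0$ with $\|q_{j_0}-q_*\|_X<\varepsilon/3$; by the inductive stability of the bounded case, $\|q_{j_0}^{\delta_n}-q_{j_0}\|_X<\varepsilon/3$ for $n$ large. Part~(i) of the preceding proposition then forces $\|q_j^{\delta_n}-q_*\|_X$ to be monotonically nonincreasing for $j_0\le j\le k_n$, hence $\|q_{k_n}^{\delta_n}-q_*\|_X<\varepsilon$.

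\emph{Convergence to $q^\dagger$.} Under the nullspace inclusion $\mathcal{N}(F'_{q^\dagger})\subset\mathcal{N}(F'_q)$ for all $q\in\mathcal{B}_r(q^\dagger)$, duality gives $\mathrm{Ran}((F'_q)^*)\subset\mathcal{N}(F'_{q^\dagger})^\perp$. Each Landweber update adds an element of $\mathrm{Ran}((F'_{q_j^\delta})^*)$ to the previous iterate, so induction (starting from $q_0^\delta=q_0$) yields $q_k^\delta-q_0\in\mathcal{N}(F'_{q^\dagger})^\perp$ for every $k$, and the same inclusion holds for $q^\dagger-q_0$ by invariance of the affine subspace. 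Consequently every subsequential limit identified in the first part lies in $q^\dagger+\mathcal{N}(F'_{q^\dagger})^\perp$; uniqueness of the solution within this affine subspace, a standard consequence of the tangential-cone estimate that was used to bound $\eta_0$, pins the limit down to $q^\dagger$ and upgrades subsequential convergence to full convergence.

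\textbf{Main obstacle.} The delicate step is the second case of the first assertion: when $k_n\to\infty$, one must interleave stability of $q_j^\delta$ at fixed $j$ with a uniform-in-$j$ monotonicity beyond that index, and this is precisely where the explicit choice $\tau>2(1+\eta_0)/(1-2\eta_0)$ from the preceding proposition is essential so that monotonicity persists over the growing range $j\le k_n$. The identification of the limit with $q^\dagger$ is by comparison more algebraic and becomes routine once the subsequential convergence and the invariance of the affine subspace $q_0+\mathcal{N}(F'_{q^\dagger})^\perp$ are in hand.
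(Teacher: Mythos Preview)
The paper does not supply its own proof of this theorem: it simply invokes Theorem~2.6 of \cite{KNS}. Your sketch is precisely the standard Kaltenbacher--Neubauer--Scherzer argument that reference contains, so in that sense your approach coincides with what the paper relies on.

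Your outline is correct in structure and in the main ideas. Two small points are worth tightening. First, in the unbounded case you need to verify that the iterates $q_j^{\delta_n}$ remain in $\mathcal{B}_r(q_*)$ so that part~(i) of the preceding proposition is applicable for every $j_0\le j<k_n$; this follows by the same monotonicity once you know $q_{j_0}^{\delta_n}\in\mathcal{B}_r(q_*)$, but it should be stated. Second, the phrase ``the same inclusion holds for $q^\dagger-q_0$ by invariance of the affine subspace'' is not quite an argument: the membership $q^\dagger-q_0\in\mathcal{N}(F'_{q^\dagger})^\perp$ is a characterization of $q^\dagger$ as the $q_0$-minimum-norm solution, which in \cite{KNS} is derived from the tangential-cone condition and the definition of $q^\dagger$, not from any invariance. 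Once that is in place, your uniqueness-within-the-subspace step is exactly the KNS conclusion.
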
}

\textcolor{rot1}{\begin{remark}
According to the convergence of Landweber iteration, the choice of the initial approximation may affect the efficiency of the algorithms. In fact, in this paper we reconstruct the relative factors of the elastic parameters of the inhomogeneous medium compared with the parameters of the background medium, not the elastic parameters of the inhomogeneous medium themselves, i.e., we reconstruct
\ben
q_\lambda=\frac{\lambda}{\lambda_0}-1,\quad q_\mu=\frac{\mu}{\mu_0}-1,\quad q_\rho=\frac{\rho}{\rho_0}-1,
\enn
and they are of $\mathcal{O}(1)$. Thus, a simple choice of the initial approximation, that we applied in this paper, is $q_\lambda=q_\mu=q_\rho=0$ which means that there is no inhomogeneity and it can be seen from the numerical examples that this choice works well. As suggested in \cite{BLLT} for the inverse medium problems in acoustics, one can obtain an initial guess from the Born approximation and the scattering data with the higher frequency must be used in order to recover more Fourier modes of the true scatterer.
\end{remark}}

\section{Discussion of a special case}
\label{sec:4}

In this section, we briefly discuss the special case that $q_\lambda=q_\mu=0$. In this case, the considered inverse medium problem in elasticity is consistent with that in acoustics and electromagnetics discussed in \cite{BL04,BL052,BL051,BL07,BL09}.

Alternatively, one can consider near-field scattering map $\widetilde{N}$ as $\widetilde{N}(q_\rho)=\textcolor{rot1}{\widetilde{u}}|_{\Gamma_R}$, where $u=u^{sc}+u^{in}\in H^1(B_R)^2$ is the unique weak solution of (\ref{total})-(\ref{TBC}). Following the steps discussed in sections \ref{sec:2} and \ref{sec:3} (see also in \cite{BL04}), we can obtain the following result.

\begin{theorem}
The near-field scattering maps $N,\widetilde{N}$ are Fr\'echet differentiable with respect to $q_\rho$ and their Fr\'echet derivatives are given by
\ben
N'_{q_\rho}(\delta q_\rho)=\widetilde{N}'_{q_\rho}(\delta q_\rho)=v|_{\Gamma_R},
\enn
where $v\in H^1(B_R)^2$ is the unique weak solution of the following boundary value problem
\ben
\Delta^{*}_{\lambda_0,\mu_0}v+ \rho_0\omega^2(1+q_\rho)v &=& -\rho_0\omega^2\delta q_\rho(u^{in}+u^{sc})\quad\mbox{in}\quad B_R,\\
T_{\lambda_0,\mu_0}v &=& \mathcal{B}v \quad\mbox{on}\quad\Gamma_R.
\enn
Moreover, for any $h\in H^{1/2}(\G_R)^2$,
\ben
(N_{q_\rho}')^*(h)=(\widetilde{N}_{q_\rho}')^*(h)= \rho_0\textcolor{rot1}{\omega^2}\ov{(u^{in}+u^{sc})}\cdot\varphi,
\enn
where $\varphi\in H^1(B_R)^2$ is the unique weak solution of the following boundary value problem
\ben
\Delta^*_{\lambda_0,\mu_0}\ov{\varphi}+ \rho_0(1+q_{\rho})\omega^2\ov{\varphi} &=& 0\quad\mbox{in}\quad B_R,\\
T_{\lambda_0,\mu_0}\ov{\varphi}-\mathcal{B}\ov{\varphi} &=& \ov{h}\quad\mbox{on}\quad\G_R.
\enn
\end{theorem}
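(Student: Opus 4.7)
The plan is to obtain the statement as a direct specialization of the results already developed in Sections 2.2 and 3.1, rather than repeating the full variational machinery. Throughout I would use the total field $u=u^{in}+u^{sc}$ to the direct problem (\ref{total})--(\ref{TBC}) with parameters $(0,0,q_\rho)$.

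First, I would apply the Fr\'echet differentiability result for $N$ (established via Lemmas \ref{lemma.F1}--\ref{lemma.F3}) with $q_\lambda\equiv q_\mu\equiv 0$ and with an admissible perturbation $\delta q = (0,0,\delta q_\rho)$. By construction the derivative is $N'_{q_\rho}(\delta q_\rho) = \gamma\,\mathcal{T}_{(0,0,q_\rho)}(0,0,\delta q_\rho)$, where $w:=\mathcal{T}_{(0,0,q_\rho)}(0,0,\delta q_\rho)\in H^1(B_R)^2$ solves the variational problem
\ben
a_{(0,0,q_\rho)}(w,v)-\int_{\G_R}\mathcal{B}w\cdot\ov{v}\,ds = -a_{(0,0,\delta q_\rho-1)}(u,v),\qquad v\in H^1(B_R)^2.
\enn
Reading off the definition (\ref{sesquilinear}) of $a_{q}$, the perturbed form collapses to its density slot, namely $a_{(0,0,\delta q_\rho-1)}(u,v) = -\int_{B_R}\rho_0\,\delta q_\rho\,\omega^2\,u\cdot\ov{v}\,dx$. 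Then I would recognize the left-hand side as the weak form of $\Delta^{*}_{\lambda_0,\mu_0}w+\rho_0(1+q_\rho)\omega^2 w$ under the TBC $T_{\lambda_0,\mu_0}w=\mathcal{B}w$ on $\G_R$; an integration by parts identifies $w$ with the field $v$ of the theorem statement and produces exactly the source $-\rho_0\omega^2\delta q_\rho(u^{in}+u^{sc})$ in $B_R$.

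For the identity $N'_{q_\rho}=\widetilde{N}'_{q_\rho}$, I would use that $\widetilde{N}$ and $N$ differ only by a quantity that is independent of $q_\rho$ (the trace of the incident field, or the trace of the total field, depending on the reading of $\widetilde{u}$); therefore any Fr\'echet derivative in $q_\rho$ of that difference vanishes, and the two derivatives coincide.

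For the adjoint formula, the cleanest route is to specialize Theorem \ref{adjointProblem}. That theorem gives $(N_q')^*(h)$ as a triple whose three components correspond to the three parameter slots. When $q_\lambda=q_\mu=0$ and we vary only $q_\rho$, only the third component survives upon pairing with $\delta q_\rho$, yielding $\rho_0\omega^2\,\ov{(u^{in}+u^{sc})}\cdot\varphi$. The adjoint equations (\ref{ad1})--(\ref{ad2}) from Theorem \ref{adjointProblem}, with $q_\lambda=q_\mu=0$ substituted in, reduce precisely to the BVP stated in the present theorem.

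I do not anticipate a serious obstacle: the only bookkeeping subtleties are the sign of the density term (which is negative inside $a_q$ but appears with a plus on the source side after moving it across), and verifying that the TBC on $\G_R$ is inherited by $v$ from the variational formulation, which is immediate because the $\mathcal{B}$-term is already absorbed into the left-hand side. Everything else is a transcription of specializations of Theorem \ref{wellposedness} and Theorem \ref{adjointProblem}.
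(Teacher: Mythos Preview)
Your proposal is correct and follows exactly the route the paper itself indicates: the paper does not give a separate proof but simply states that the result is obtained ``following the steps discussed in sections \ref{sec:2} and \ref{sec:3}'', i.e., by specializing the Fr\'echet derivative construction (Lemmas \ref{lemma.F1}--\ref{lemma.F3}) and the adjoint formula of Theorem \ref{adjointProblem} to the case $q_\lambda=q_\mu=0$, $\delta q=(0,0,\delta q_\rho)$. Your handling of the sign in the density slot, the reduction of $\sigma_{0,0}$ to $\Delta^*_{\lambda_0,\mu_0}$, and the observation that $N$ and $\widetilde{N}$ differ by a $q_\rho$-independent term are all in order.
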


Similar as Algorithm 3.1, we can summarize the multi-frequency iterative algorithm for the reconstruction of inhomogeneous density function $q_\rho$ \textcolor{rot1}{in Algorithm 3.3}.

{\bf Algorithm 3.3 (Multi-frequency iterative algorithm for $q_\rho$)}
\begin{itemize}
\item Collect the near-field data over all frequencies $\om_i$, $i=1,\cdots,N$ and all incident directions $d_j$, $j=1,\cdots,M$.
\item Set an initial approximation $q_\rho^{1,1,0}=0$.
\item Apply the following iteration:\\
DO $i=1,\cdots,N$\\
DO $j=1,\cdots,M$\\
DO $l=1,\cdots,L$\\
Update the elastic parameters by the formula
    \ben
    q_\rho^{i,j,l}=q_\rho^{i,j,l-1}+\alpha\mbox{Re}\left\{  \rho_0\omega^2_i\ov{\widetilde{u}^{i,j,l-1}}\cdot\varphi^{i,j,l-1} \right\},\quad l=1,\cdots,L,
    \enn
ENDDO\\
Set $q_\rho^{i,j+1,0}=q_\rho^{i,j,L}$\\
ENDDO\\
Set $q_\rho^{i+1,1,0}=q_\rho^{i,M,L}$\\
ENDDO
\end{itemize}

\section{Numerical examples}
\label{sec:5}
In this section, we present several numerical examples to verify the accuracy and effectiveness of the recursive algorithm.  We always choose $\lambda_0=2$, $\mu_0=1$, $\rho_0=1$ and $R=1$. The near-field measurements are obtained by using finite element method solving the forward scattering problem. Define the relative error
\ben
e_q:=\frac{\|q-\widetilde{q}\|_{L^2(B_R)}}{\|q\|_{L^2(B_R)}},
\enn
where $q$ and $\widetilde{q}$ are the true and reconstructed \textcolor{rot1}{values} of the parameter of the scatterer. The true values of the elastic parameters $q_\lambda,q_\mu,q_\rho$ are shown in Fig.\ref{fig1}. Ten equally spaced frequencies are used in the construction, starting from the lowest frequency $\omega_{\min}=1$ and ending at the highest frequency $\omega_{\max}=10$. The number of incident directions is taken as $M=16$ and $\theta_j=2(j-1)\pi/M$ for $j=1,\cdots,M$. At each incident direction, 10 Landweber iteration steps are taken for one frequency. Corresponding to the stiffness tensor of the background elastic medium using Voigt notation, we chose the relaxation parameter $\alpha$ as a matrix
\be
\label{alpha1}
\alpha=\frac{1}{100\omega}\begin{bmatrix}
2+\lambda_0/\mu_0 & \lambda_0/\mu_0 & 0 \\
\lambda_0/\mu_0 & 2+\lambda_0/\mu_0 & 0 \\
0 & 0 & 1
\end{bmatrix}.
\en
\textcolor{rot1}{We collect the final reconstruction error for the following examples in Table 1 and 2. In addition, the number of iterations shown in the following figures are from 0 to $N\times M$.}

\begin{table}[htb]
\caption{\textcolor{rot1}{Final reconstruction errors of $q_\lambda$, $q_\mu$ and $q_\rho$ for Examples 1-4.}}
\centering
\begin{tabular}{|c|c|c|c|}
\hline
Figure & Error of $q_\lambda$ & Error of $q_\mu$ & Error of $q_\rho$ \\
\hline
\ref{fig2}  & 0.74 & 0.36 & 0.39 \\
\hline
\ref{fig3}  & 0.40 & 0.44 & 0.39 \\
\hline
\ref{fig4}(a,b,c)  & 0.43 & 0.47 & 0.42 \\
\hline
\ref{fig4}(d,e,f)  & 0.44 & 0.47 & 0.42 \\
\hline
\ref{fig5}  & 0.47 & 0.51 & 0.48 \\
\hline
\ref{fig6}  & 0.43 & 0.46 & 0.37 \\
\hline
\ref{fig7}  & 0.66 & 0.73 & 0.61 \\
\hline
\end{tabular}
\label{Table1}
\end{table}

\begin{figure}[htb]
\centering
\begin{tabular}{ccc}
\includegraphics[scale=0.12]{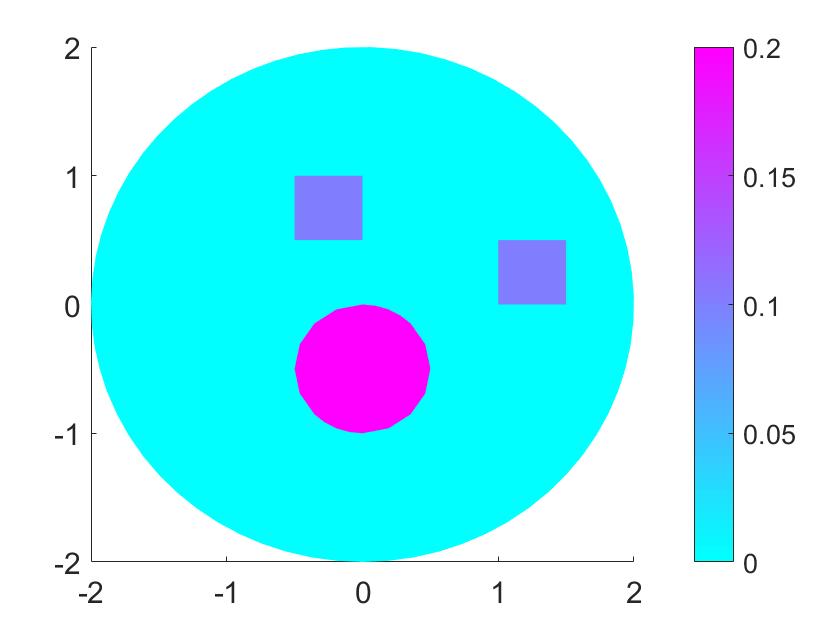} &
\includegraphics[scale=0.12]{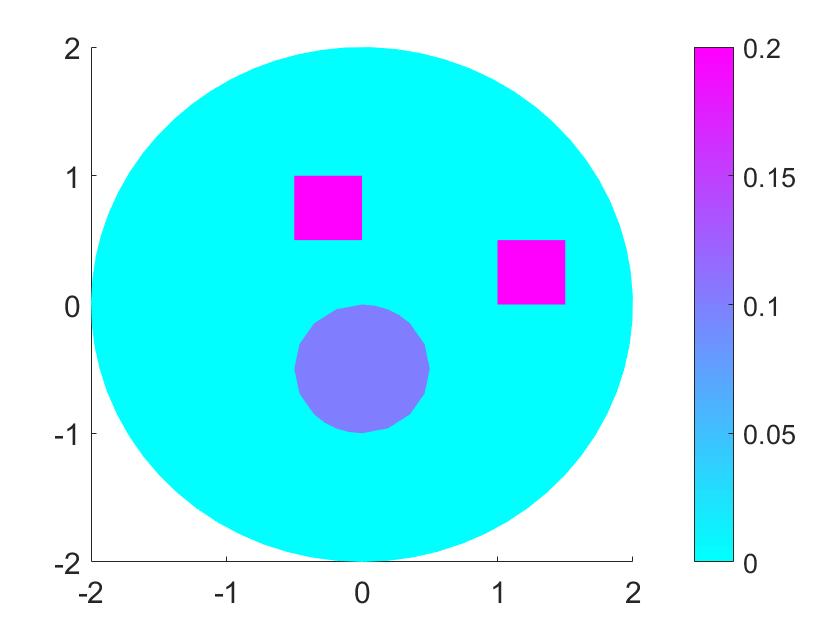} &
\includegraphics[scale=0.12]{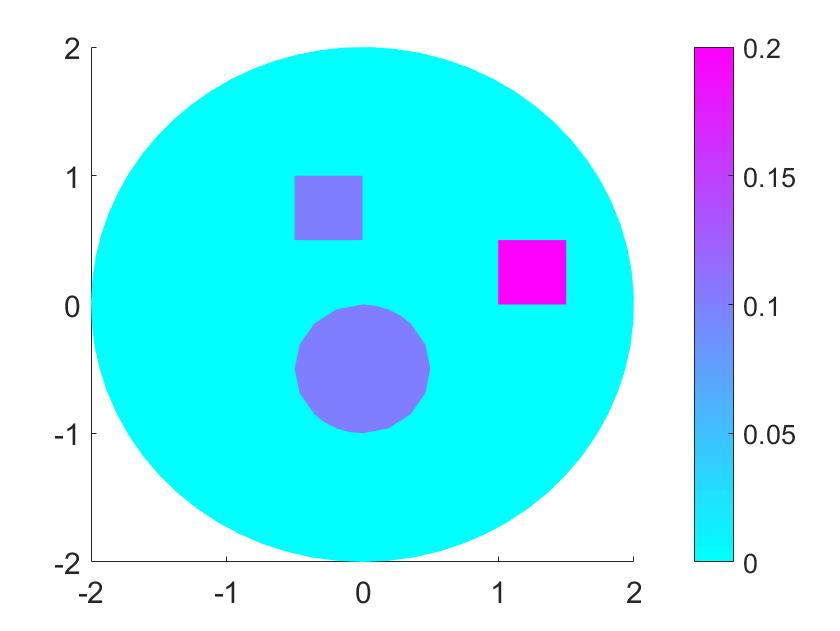} \\
(a) $q_\lambda$ & (b) $q_\mu$ & (c) $q_\rho$
\end{tabular}
\caption{The exact value of perturbed parameters.}
\label{fig1}
\end{figure}

{\bf Example 1.} We consider the reconstruction from multi-frequency measurements with multiple incident directions. The reconstructed elastic parameters are presented in Fig.\ref{fig2}(a,b,c) and Fig.\ref{fig3}(a,b,c). The relative errors shown in Fig.\ref{fig2}(d,e,f) and Fig.\ref{fig3}(d,e,f) indicate that the relative errors decrease as frequency and number of iteration increase. However, it can be seen that if we choose plane shear incident waves, the reconstruction of $q_\lambda$, which can further affect the reconstructions of $q_\mu$ and $q_\rho$, is \textcolor{rot1}{worse} than that using plane pressure incident waves. A possible explanation to this phenomenon is that, in comparison with the plane pressure waves, the plane shear incident waves only contain the information of $\mu_0$ and $\rho_0$. In the following, we consider the plane pressure incident waves only. To verify the stability of our method, the reconstructions from noised data with noise levels $\delta=3\%,5\%$ are presented in Fig.\ref{fig4}.

\begin{figure}[htb]
\centering
\begin{tabular}{ccc}
\includegraphics[scale=0.12]{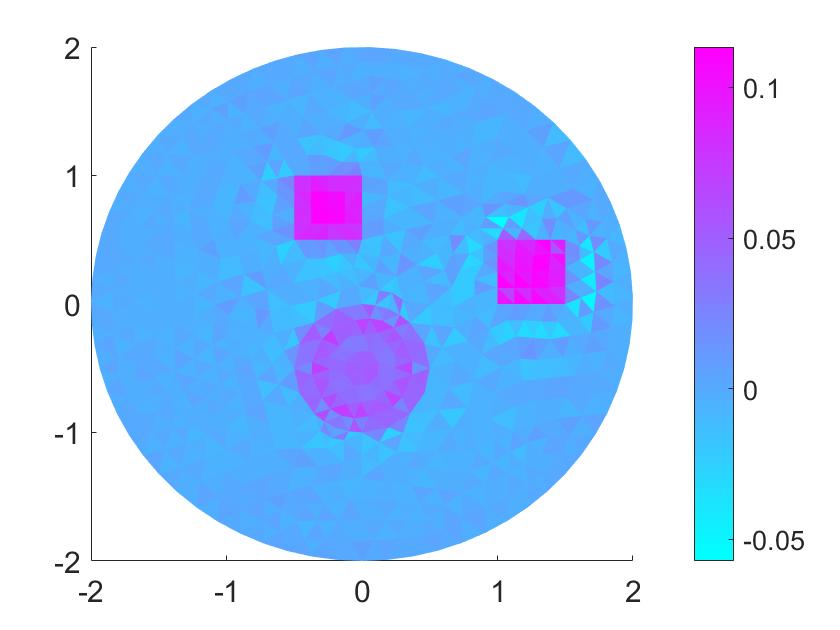} &
\includegraphics[scale=0.12]{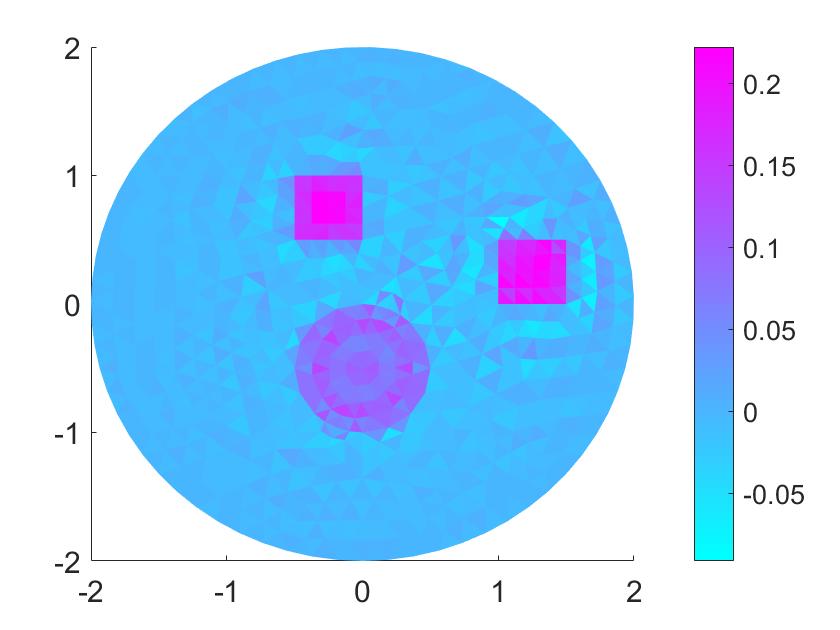} &
\includegraphics[scale=0.12]{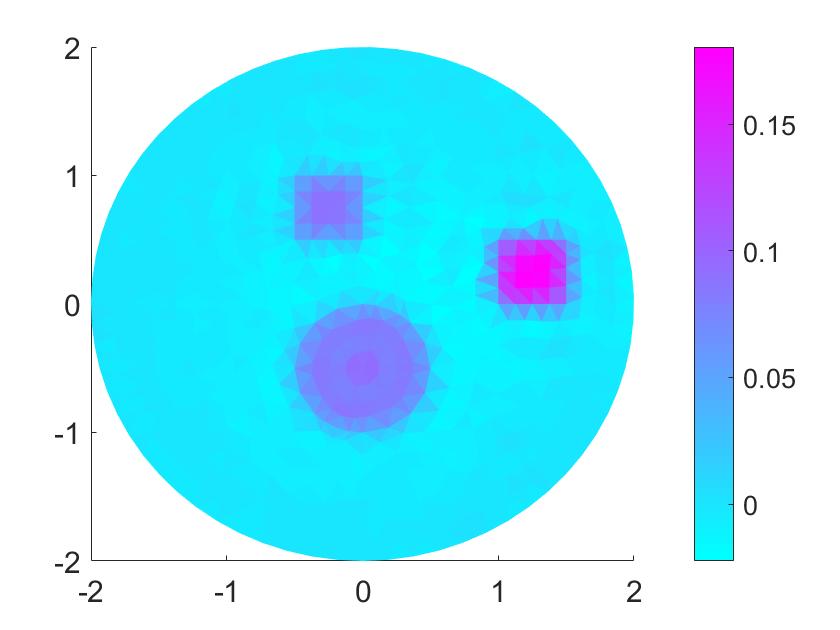} \\
(a) $q_\lambda$ & (b) $q_\mu$ & (c) $q_\rho$\\
\includegraphics[scale=0.12]{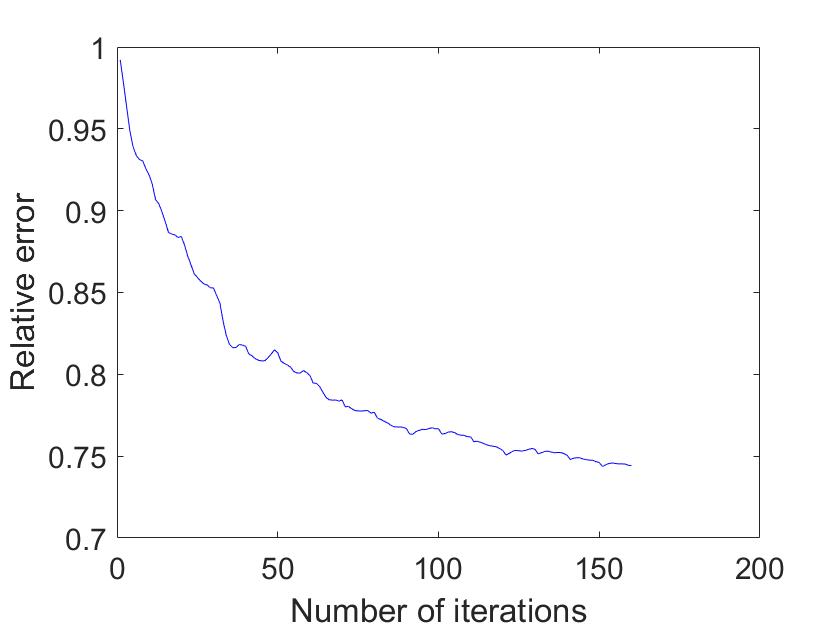} &
\includegraphics[scale=0.12]{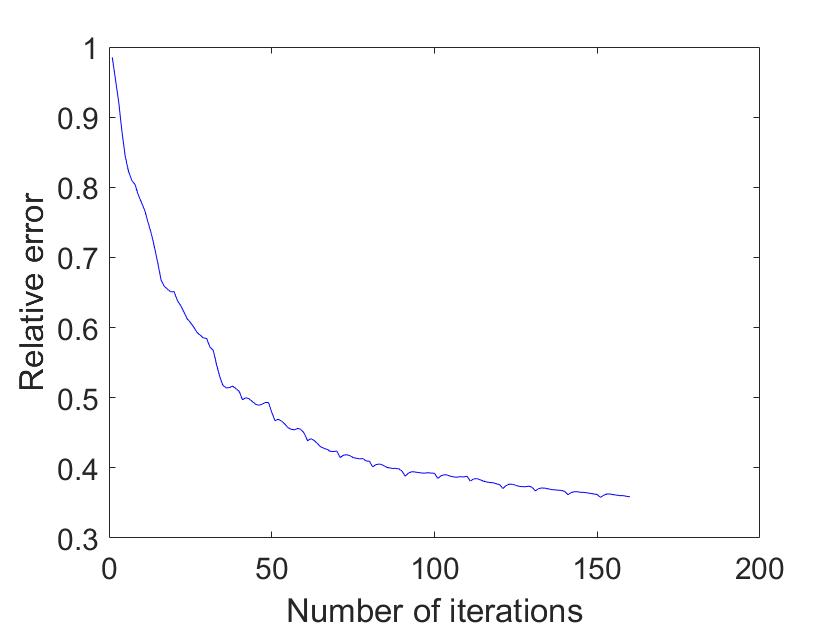} &
\includegraphics[scale=0.12]{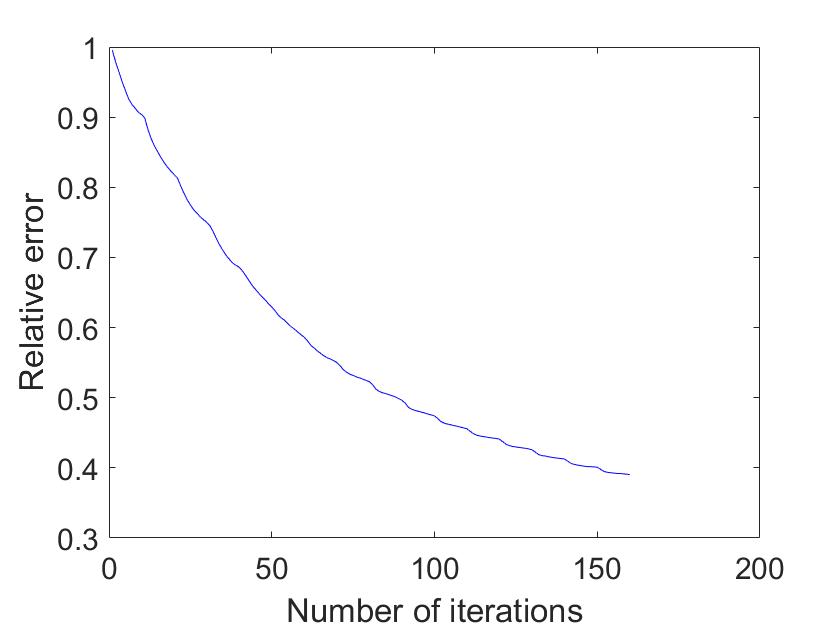} \\
(d) $e_{q_\lambda}$ & (e) $e_{q_\mu}$ & (f) $e_{q_\rho}$
\end{tabular}
\caption{Example 1: the reconstruction of perturbed parameters with plane shear incident waves.}
\label{fig2}
\end{figure}

\begin{figure}[htb]
\centering
\begin{tabular}{ccc}
\includegraphics[scale=0.12]{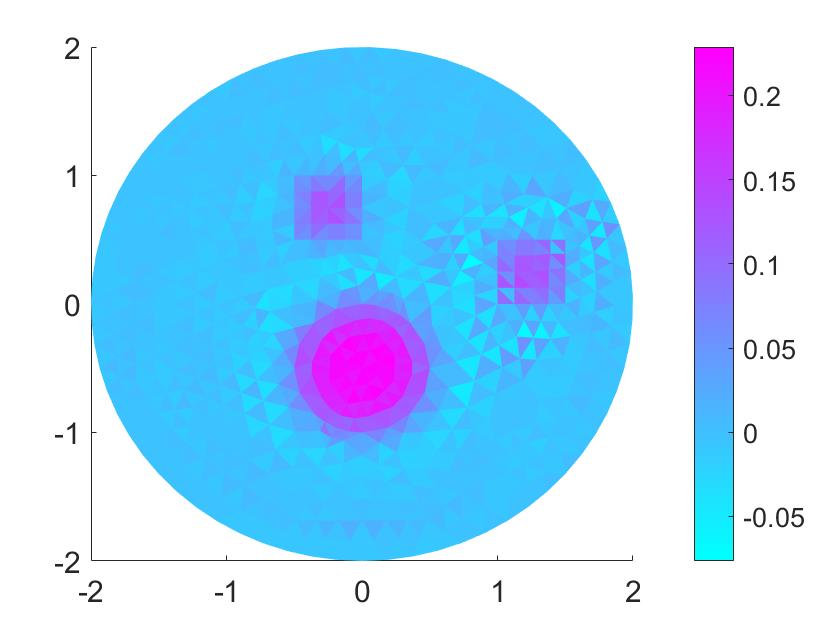} &
\includegraphics[scale=0.12]{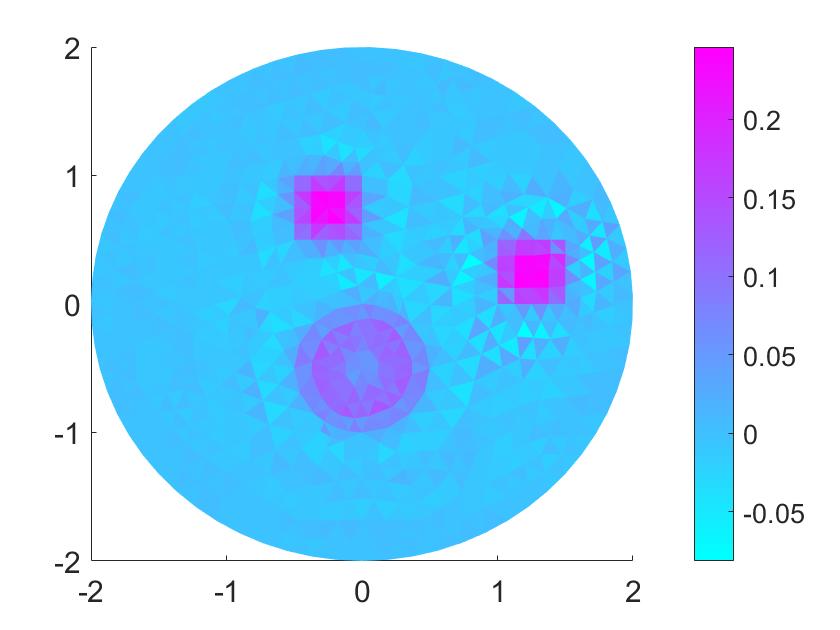} &
\includegraphics[scale=0.12]{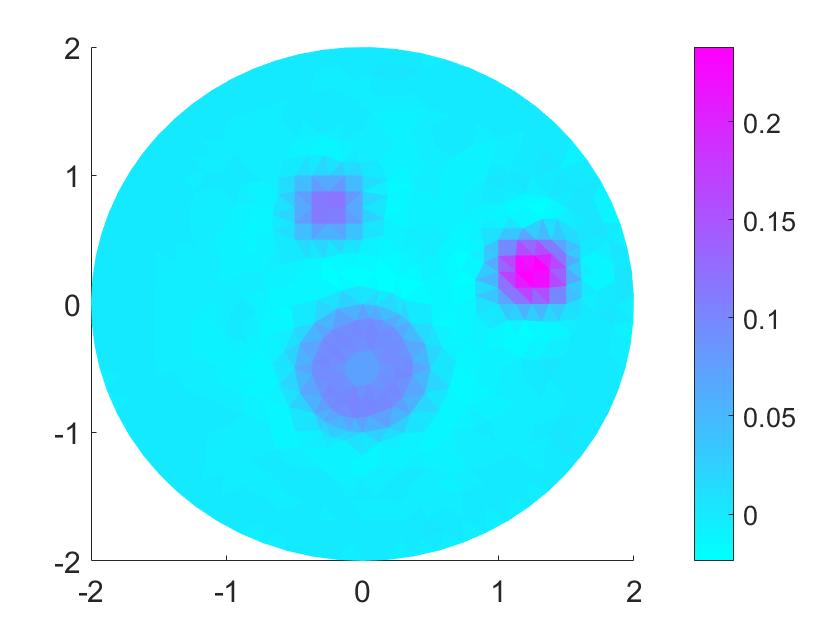} \\
(a) $q_\lambda$ & (b) $q_\mu$ & (c) $q_\rho$\\
\includegraphics[scale=0.12]{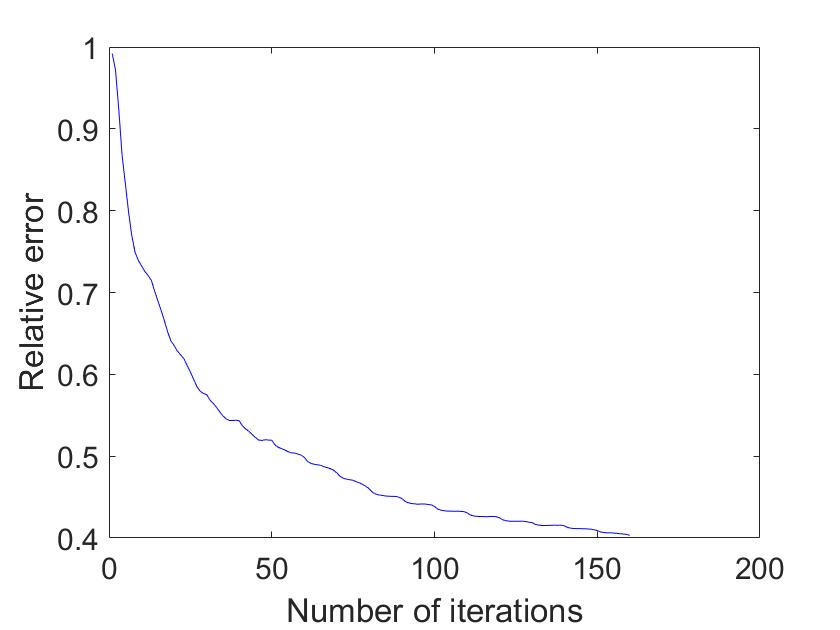} &
\includegraphics[scale=0.12]{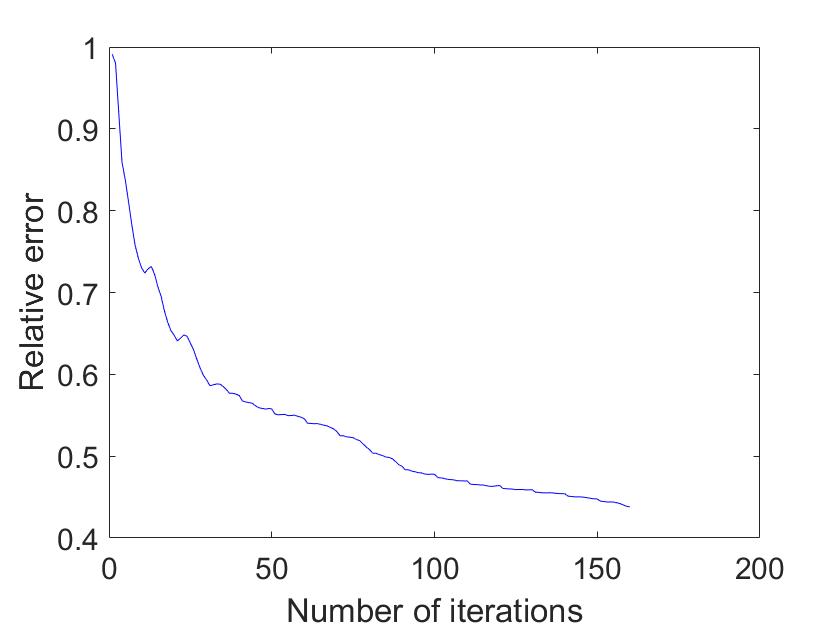} &
\includegraphics[scale=0.12]{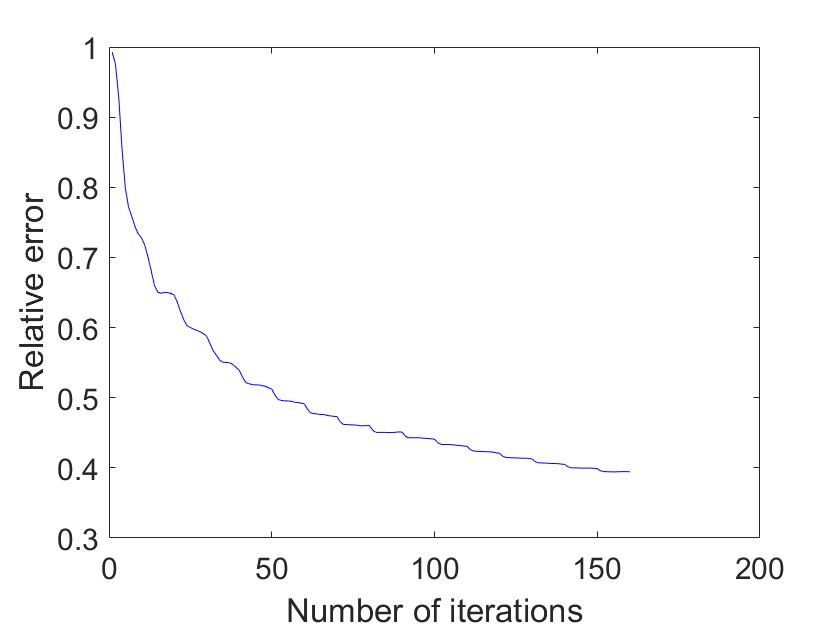} \\
(d) $e_{q_\lambda}$ & (e) $e_{q_\mu}$ & (f) $e_{q_\rho}$
\end{tabular}
\caption{Example 1: The reconstruction of perturbed parameters with plane pressure incident waves.}
\label{fig3}
\end{figure}

\begin{figure}[htb]
\centering
\begin{tabular}{ccc}
\includegraphics[scale=0.12]{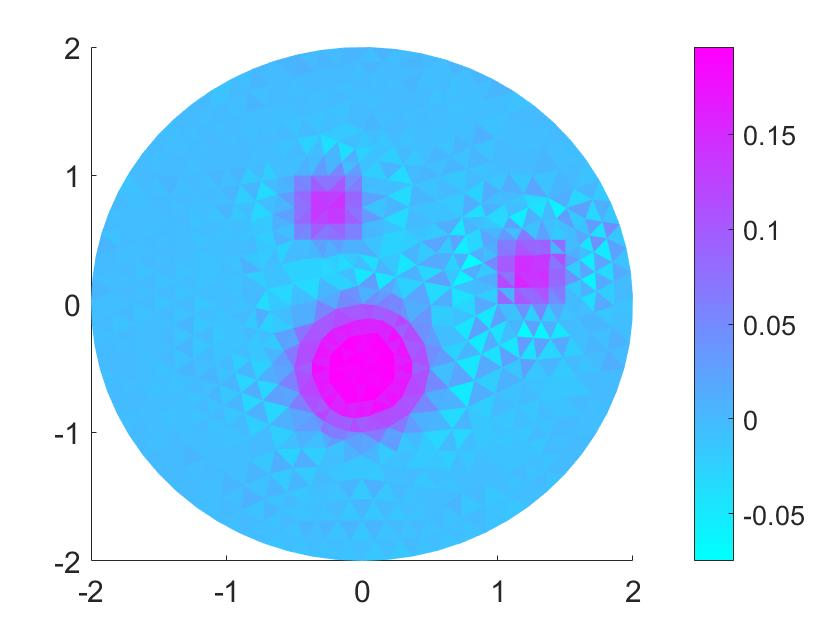} &
\includegraphics[scale=0.12]{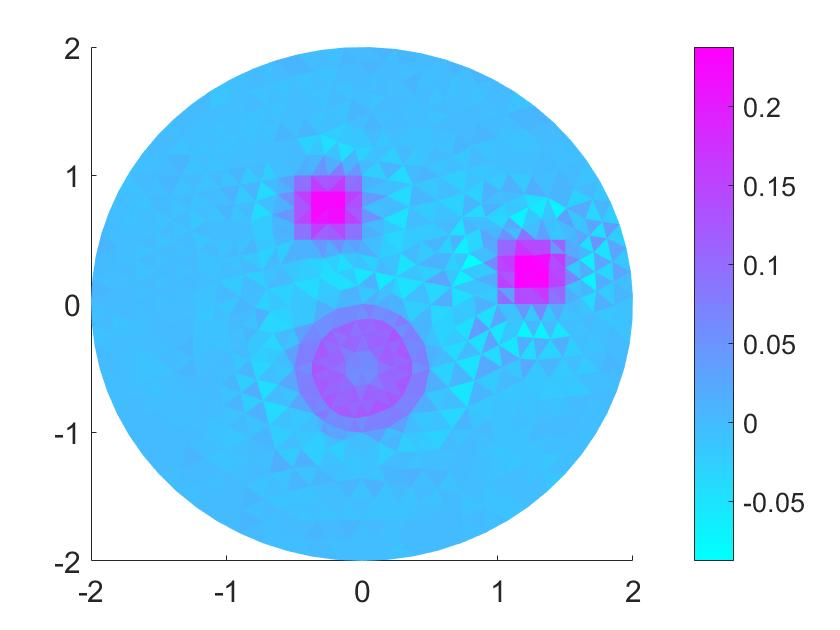} &
\includegraphics[scale=0.12]{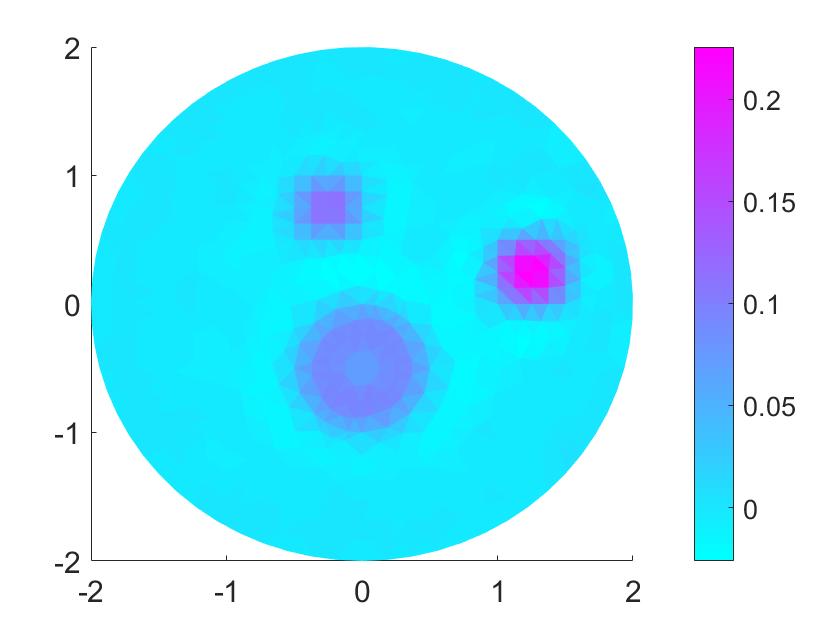} \\
(a) $q_\lambda$ & (b) $q_\mu$ & (c) $q_\rho$\\
\includegraphics[scale=0.12]{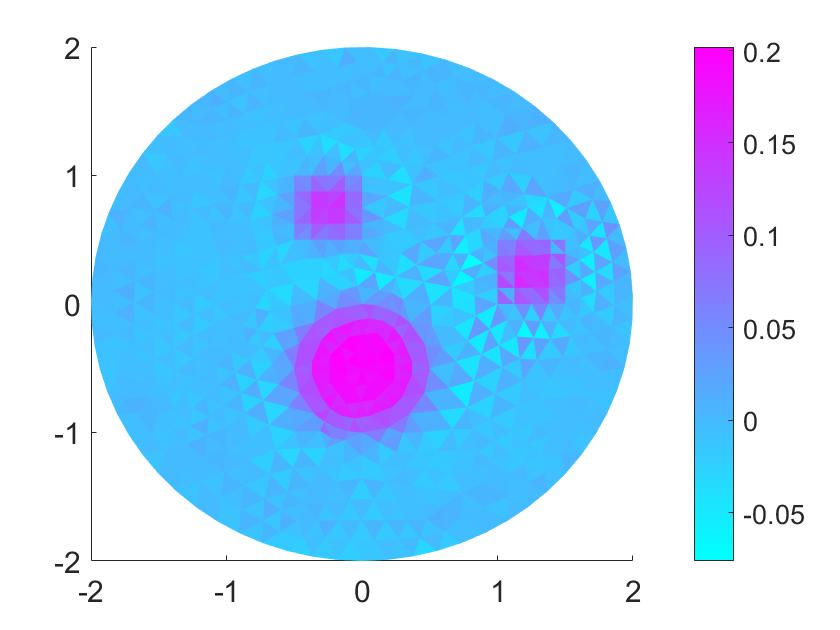} &
\includegraphics[scale=0.12]{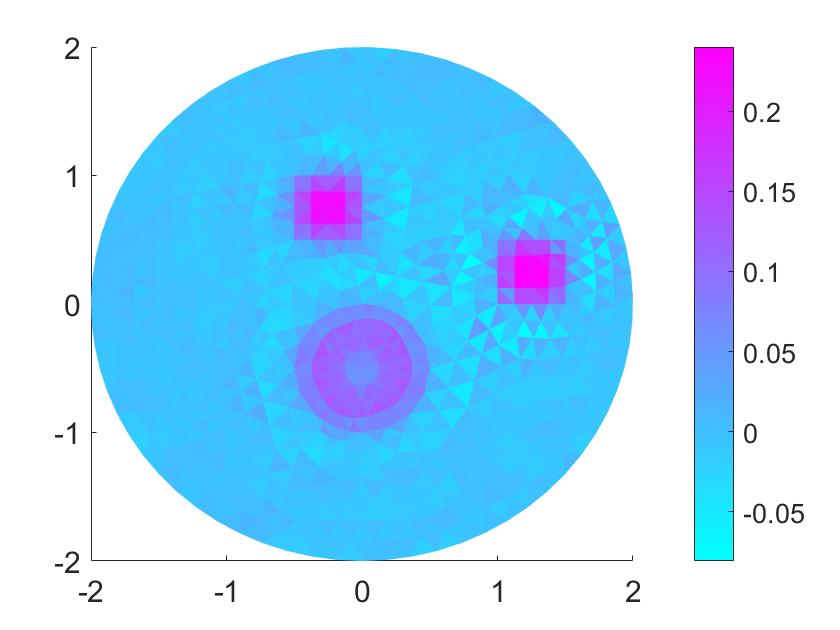} &
\includegraphics[scale=0.12]{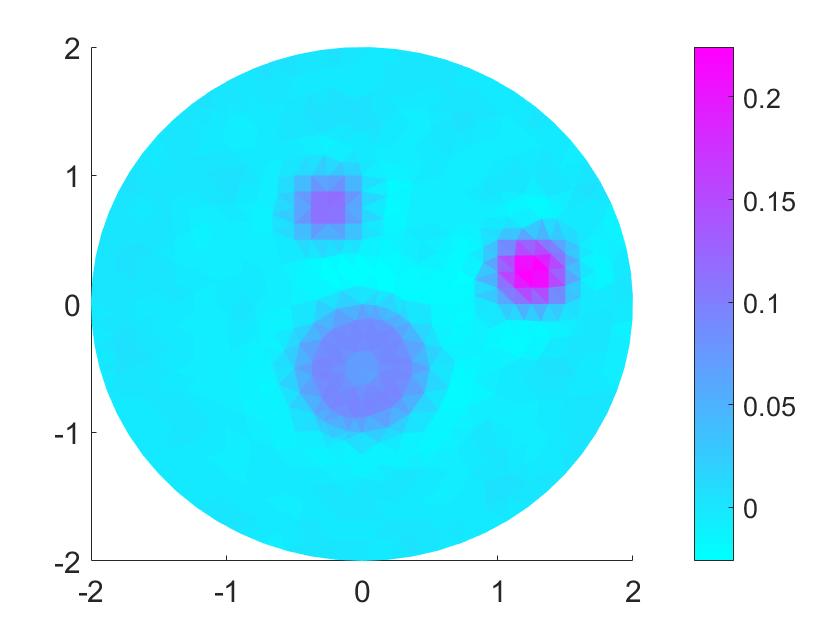} \\
(d) $q_\lambda$ & (e) $q_\mu$ & (f) $q_\rho$\\
\end{tabular}
\caption{Example 1: The reconstruction of perturbed parameters with noise level $\delta=3\%$ (a,b,c) and $\delta=5\%$ (d,e,f).}
\label{fig4}
\end{figure}

{\bf Example 2.} Note that one can chose $\alpha$ as a scale value. For simplicity, we chose $\alpha=0.01/\omega$ and the reconstruction results and the corresponding relative errors are presented in Fig.\ref{fig5}. \textcolor{rot1}{It can be seen that in this example, the iterations using the step size matrix (\ref{alpha1}) is more stable than those using scale step size. It should be pointed out that we can not prove that using the special choice of step size $\alpha$ which looks like the stiffness tensor of the background elastic medium, we can always have better reconstruction than using just a scale one. Our starting pointing is that for isotropic elastic medium, the two Lam\'e parameters have some connection since they are both determined by the Young's modulus and Poisson's ratio and have no direct relation with density. Thus, we take a similar form of stiffness tensor as the choice of step size for example which means that the modification for both $q_\lambda$, $q_\mu$ are determined by the combinations of the first two components of $(N_q')*(h)$ and the modification for density $q_\rho$ is only determined by the third component of $(N_q')*(h)$. There should be better choices of $\alpha$ than the special one we used.}

\begin{figure}[htb]
\centering
\begin{tabular}{ccc}
\includegraphics[scale=0.12]{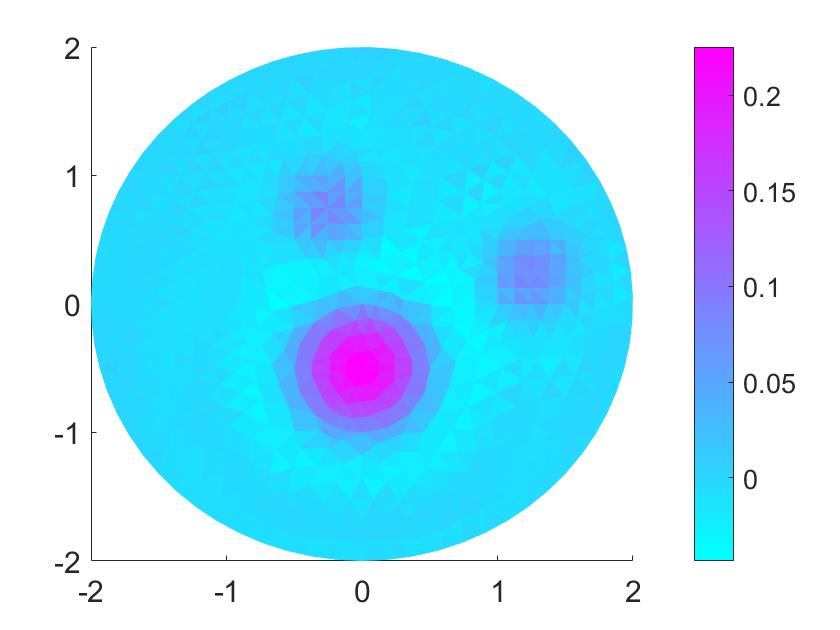} &
\includegraphics[scale=0.12]{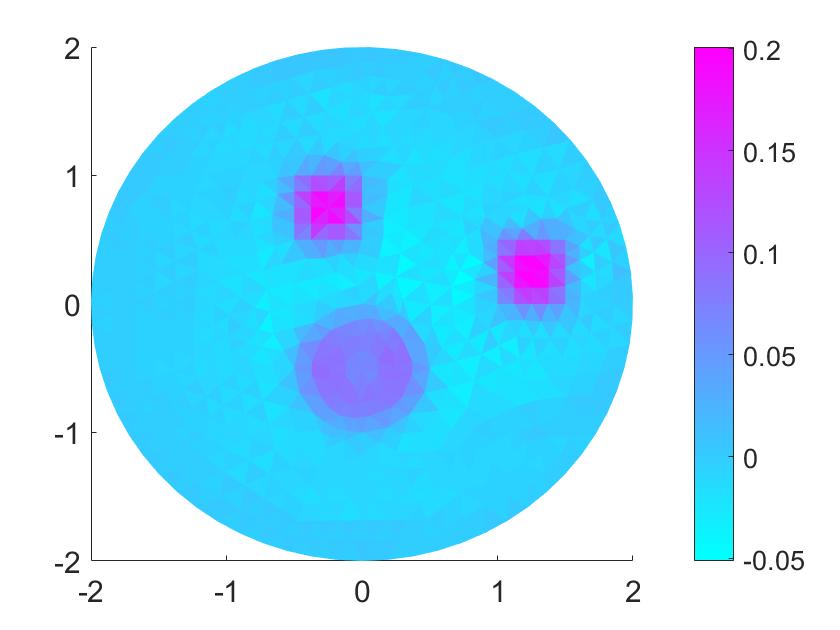} &
\includegraphics[scale=0.12]{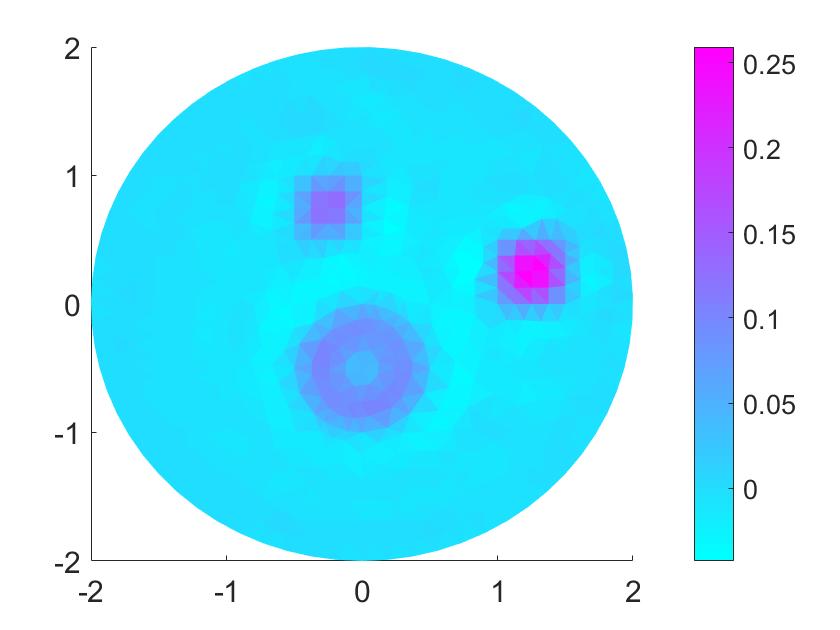} \\
(a) $q_\lambda$ & (b) $q_\mu$ & (c) $q_\rho$\\
\includegraphics[scale=0.12]{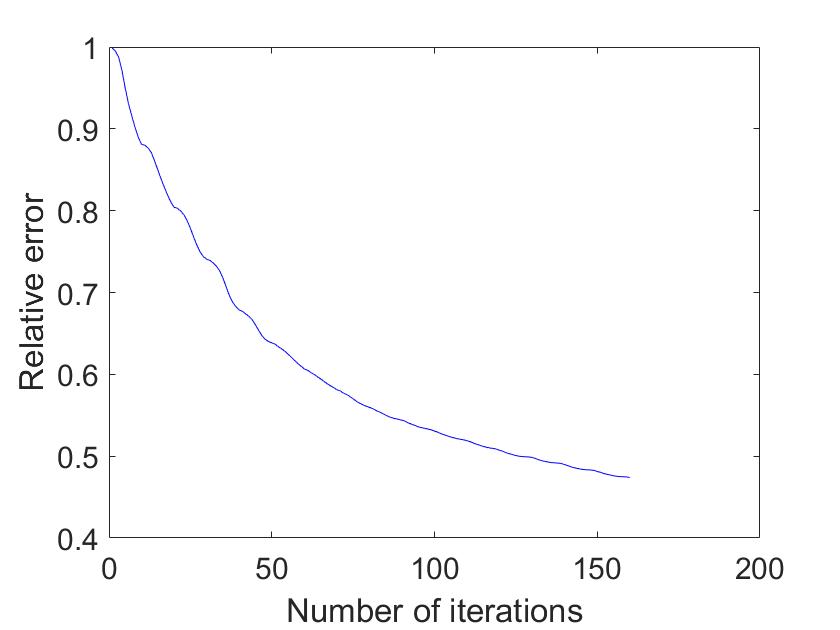} &
\includegraphics[scale=0.12]{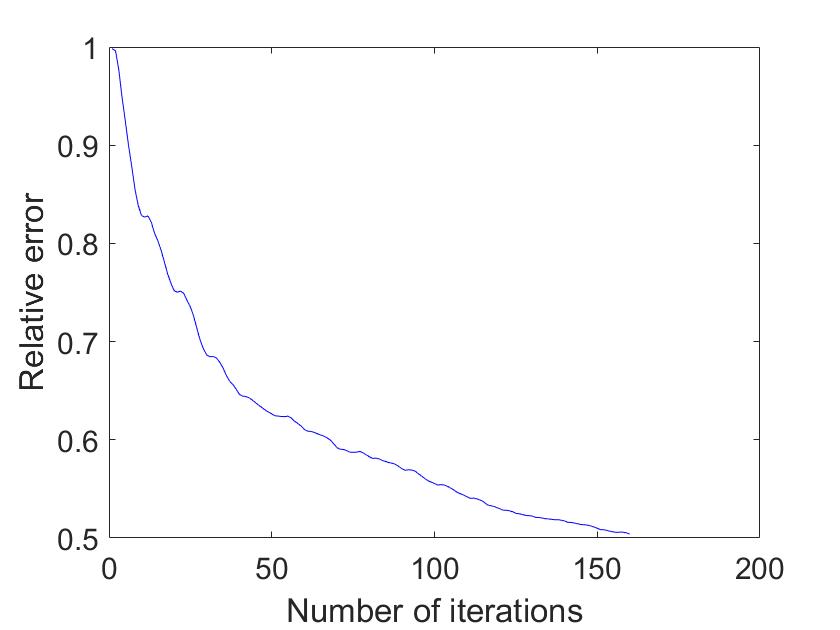} &
\includegraphics[scale=0.12]{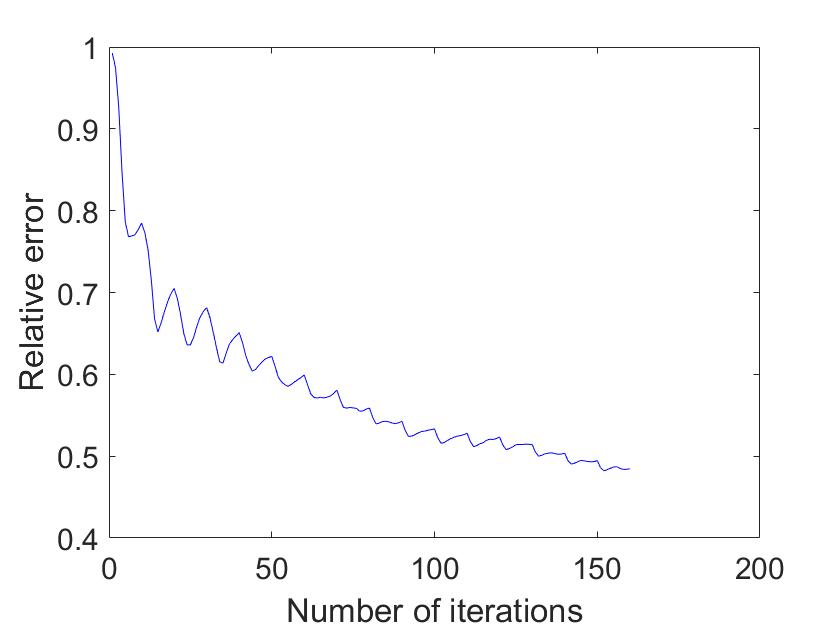} \\
(d) $e_{q_\lambda}$ & (e) $e_{q_\mu}$ & (f) $e_{q_\rho}$
\end{tabular}
\caption{Example 2: the reconstruction of perturbed parameters.}
\label{fig5}
\end{figure}

{\bf Example 3.} In this exampe, we consider the reconstruction from phaseless data. The numerical results are shown in Fig.\ref{fig6} which indicate the effectiveness of our method for the reconstruction from phaseless data.

\begin{figure}[htb]
\centering
\begin{tabular}{ccc}
\includegraphics[scale=0.12]{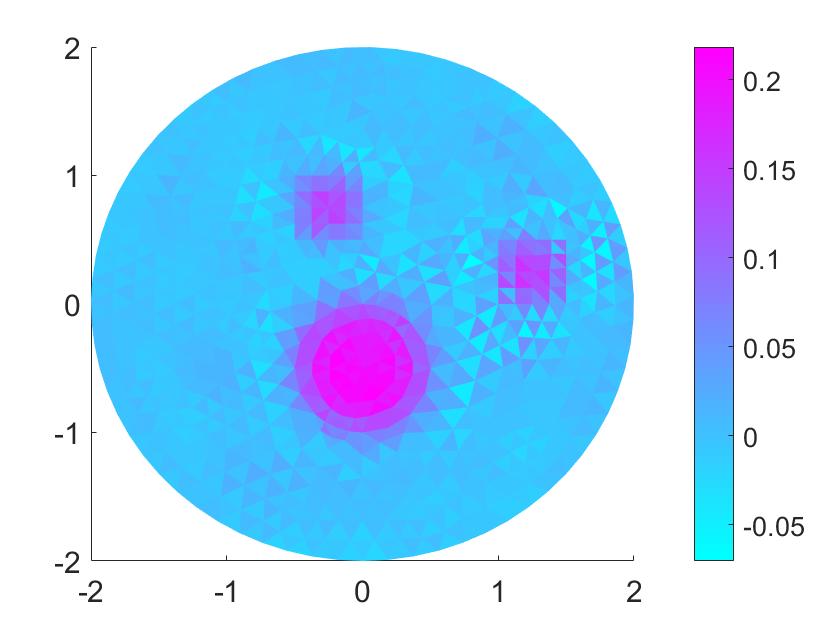} &
\includegraphics[scale=0.12]{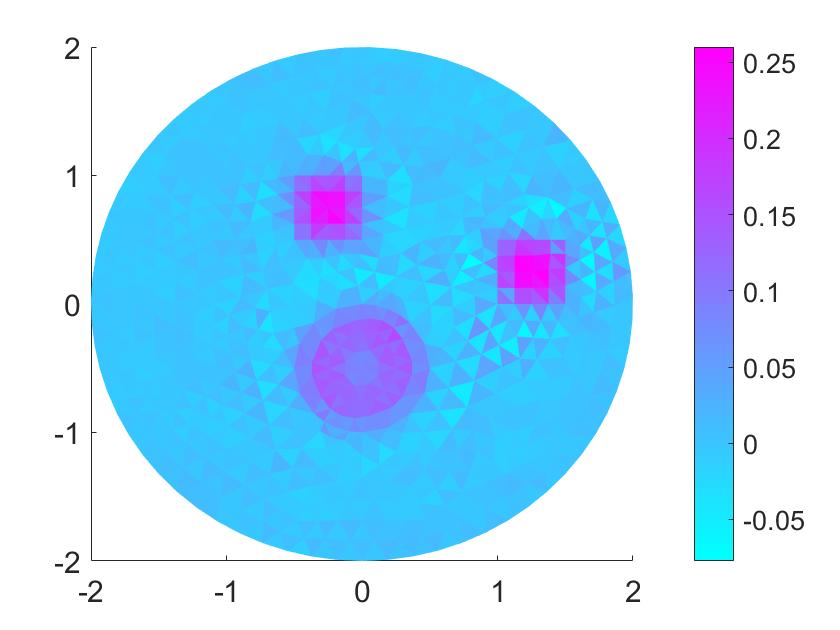} &
\includegraphics[scale=0.12]{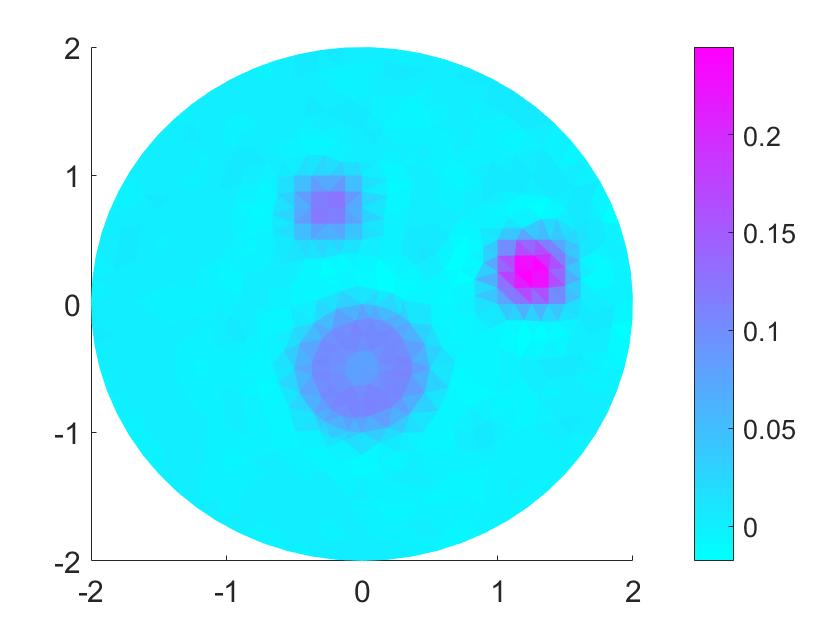} \\
(a) $q_\lambda$ & (b) $q_\mu$ & (c) $q_\rho$ \\
\includegraphics[scale=0.12]{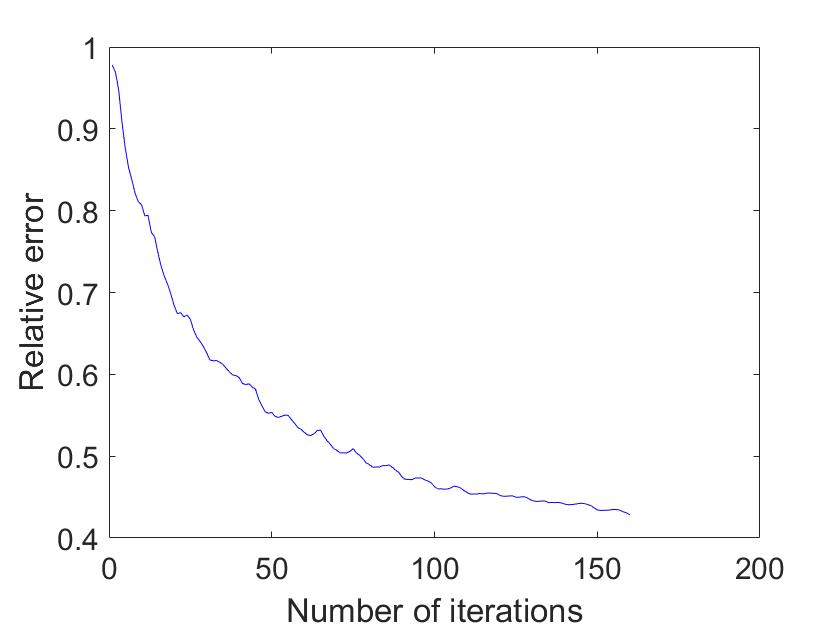} &
\includegraphics[scale=0.12]{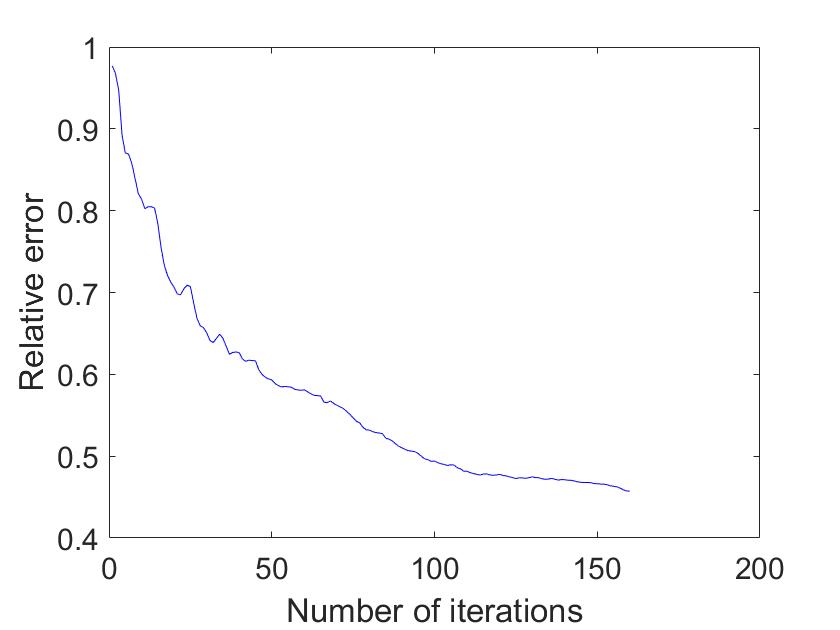} &
\includegraphics[scale=0.12]{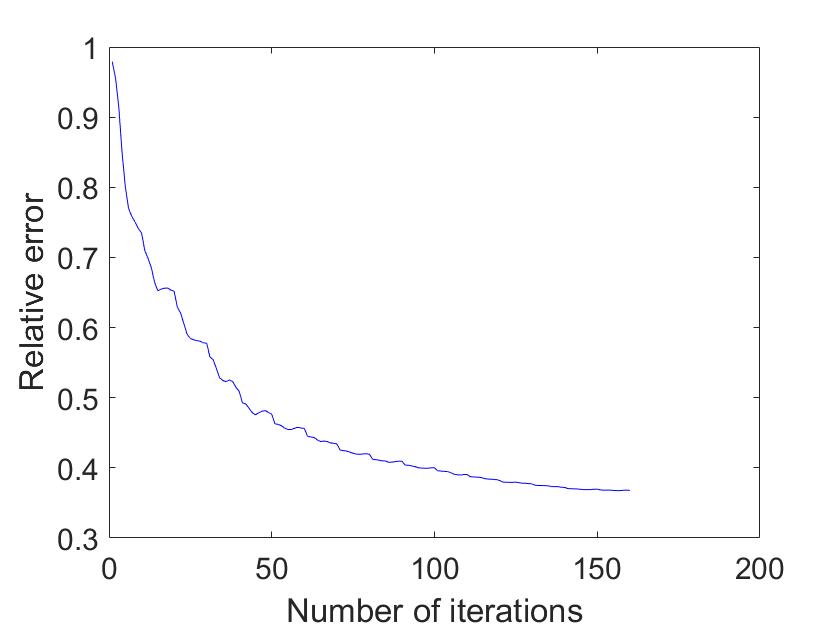} \\
(d) $e_{q_\lambda}$ & (e) $e_{q_\mu}$ & (f) $e_{q_\rho}$
\end{tabular}
\caption{Example 3: the reconstruction of perturbed parameters.}
\label{fig6}
\end{figure}

{\bf Example 4.} We use the measurements generated by the plane pressure incident wave with one fixed direction $d=(1,0)^\top$ (i.e. $M=1$). In this case the number of iterations at each frequency is set as $L=50$. The reconstruction results and relative errors are shown in Fig.\ref{fig7}.

\begin{figure}[htb]
\centering
\begin{tabular}{ccc}
\includegraphics[scale=0.12]{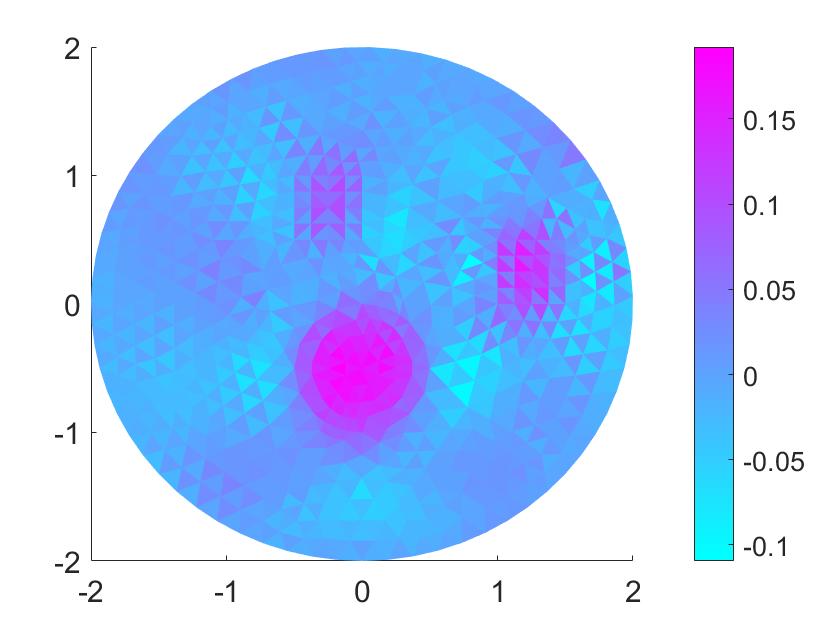} &
\includegraphics[scale=0.12]{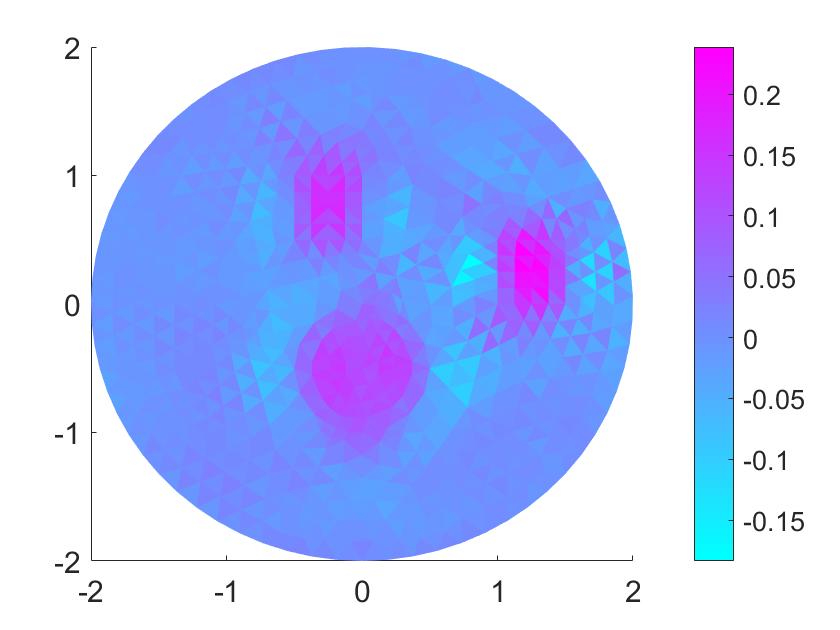} &
\includegraphics[scale=0.12]{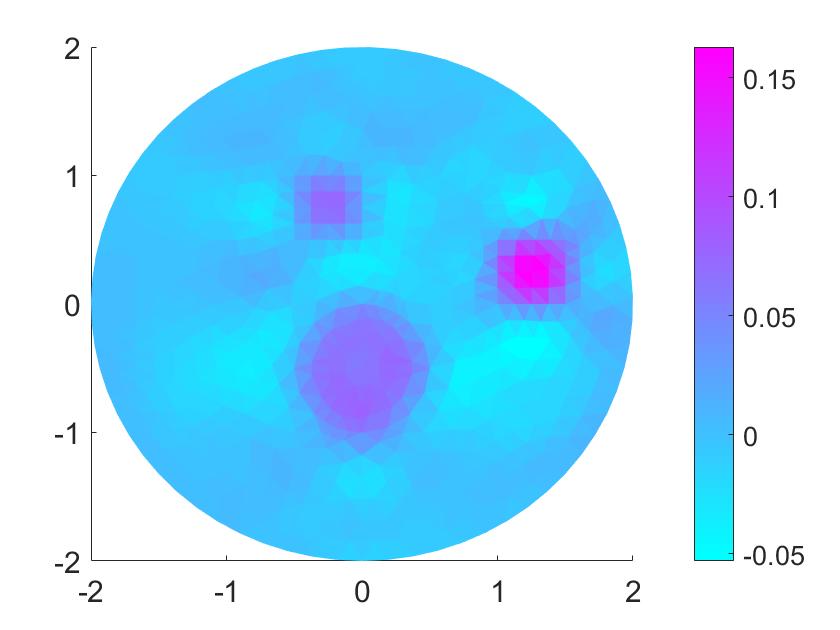} \\
(a) $q_\lambda$ & (b) $q_\mu$ & (c) $q_\rho$ \\
\includegraphics[scale=0.12]{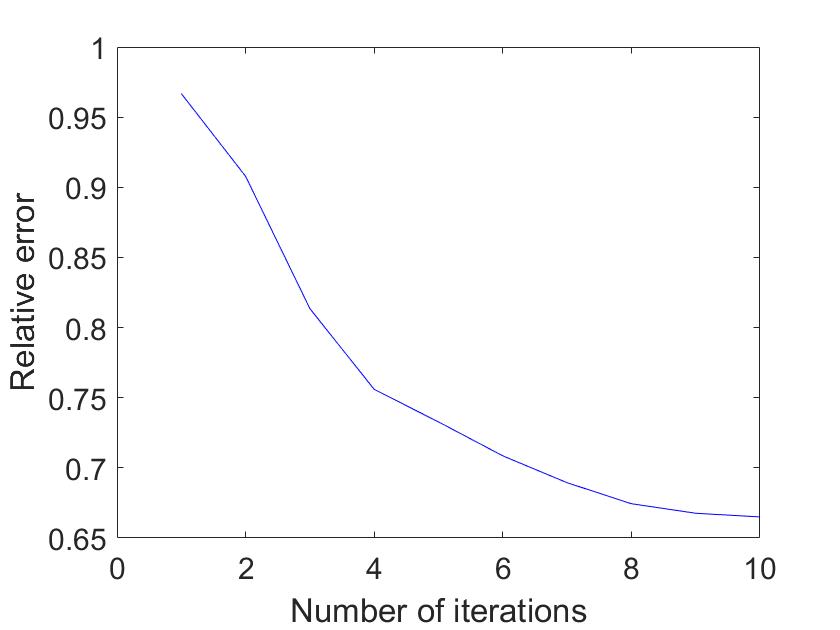} &
\includegraphics[scale=0.12]{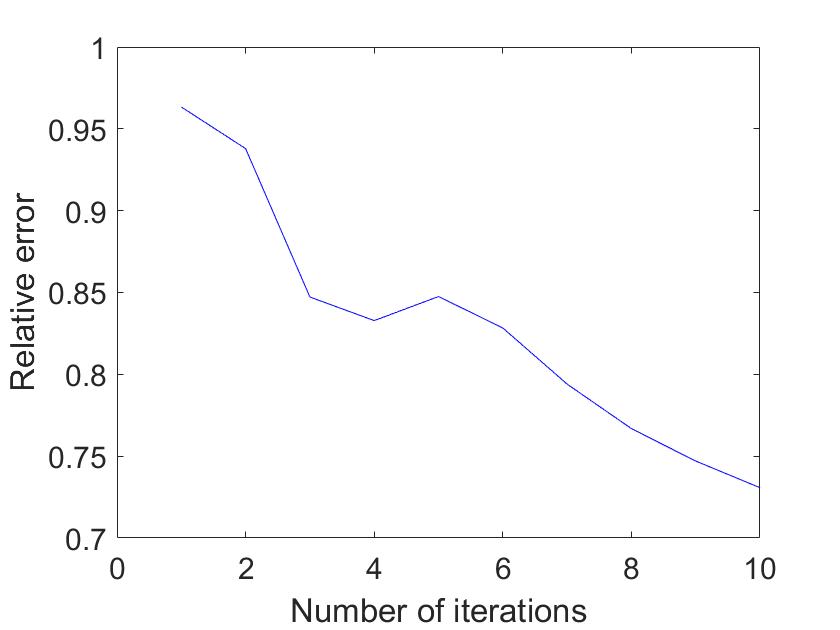} &
\includegraphics[scale=0.12]{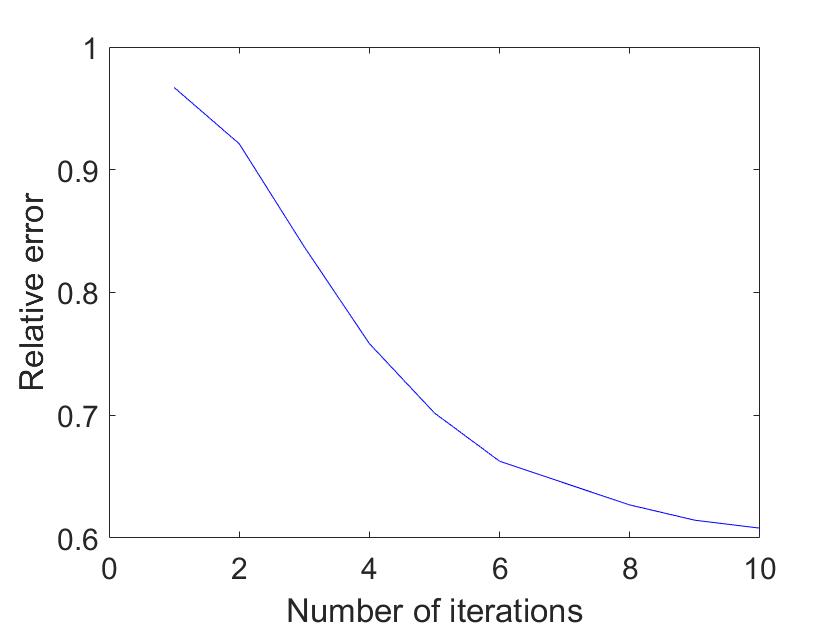} \\
(d) $e_{q_\lambda}$ & (e) $e_{q_\mu}$ & (f) $e_{q_\rho}$
\end{tabular}
\caption{Example 4: the reconstruction of perturbed parameters.}
\label{fig7}
\end{figure}

\begin{table}[htb]
\caption{\textcolor{rot1}{Final reconstruction errors of $q_\rho$ for Examples 5-6.}}
\centering
\begin{tabular}{|c|c|c|c|c|c|c|c|c|}
\hline
Figure & \ref{fig9} & \ref{fig10} & \ref{fig11}(a) & \ref{fig11}(b) & \ref{fig12}(a) & \ref{fig12}(b) & \ref{fig12}(c) & \ref{fig13} \\
\hline
Error of $q_\rho$ & 0.035 & 0.24 & 0.80 & 0.78 & 0.14 & 0.65 & 0.59 & 0.46  \\
\hline
\end{tabular}
\label{Table2}
\end{table}

{\bf Example 5.} \textcolor{rot1}{In this example}, we consider the special case discussed in section \ref{sec:4}. The exact value of $q_\rho$ is given by
\ben
q_\rho &=& 0.3(1-3x_1)^2\exp(-9x_1^2-(3x_2+1)^2) -(0.6x_1-27x_1^3-3^5x_2^5)\exp(-9x_1^2-9x_2^2)\\
&\quad& -0.03\exp(-(3x_1+1)^2-9x_2^2)
\enn
see Fig. \ref{fig8}. Choose $\alpha=0.01$ \textcolor{rot1}{and $\omega_{max}=11$}. The reconstructed $q_\rho$ and relative errors from multi-frequency measurements with plane pressure incident wave are shown in Fig.\ref{fig9} and Fig.\ref{fig10}.

\begin{figure}[htb]
\centering
\includegraphics[scale=0.3]{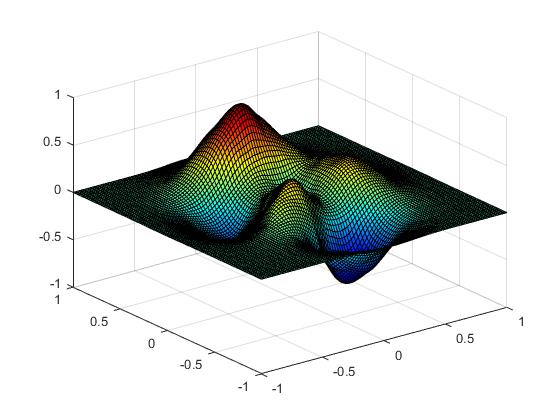}
\caption{Example 5: the exact value of $q_\rho$.}
\label{fig8}
\end{figure}

\begin{figure}[htb]
\centering
\begin{tabular}{cc}
\includegraphics[scale=0.15]{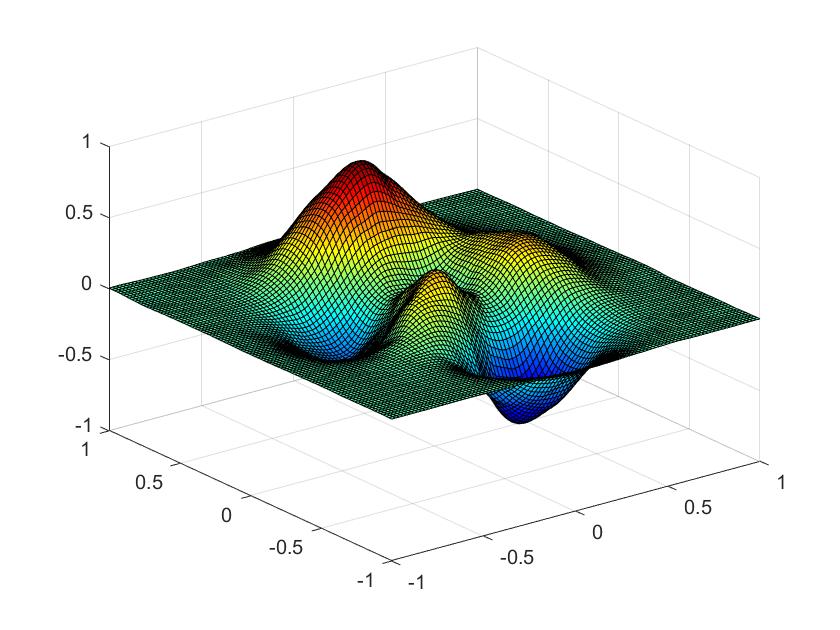} &
\includegraphics[scale=0.15]{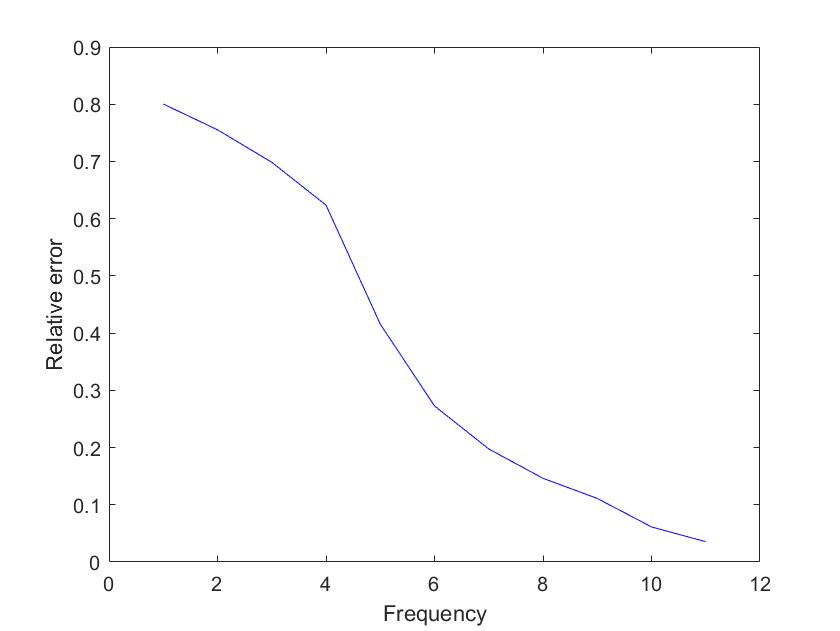} \\
(a) $q_\rho$ & (b) $e_{q_\rho}$
\end{tabular}
\caption{Example 5: the reconstruction of $q_\rho$ and relative errors from original measurements.}
\label{fig9}
\end{figure}

\begin{figure}[htb]
\centering
\begin{tabular}{cc}
\includegraphics[scale=0.15]{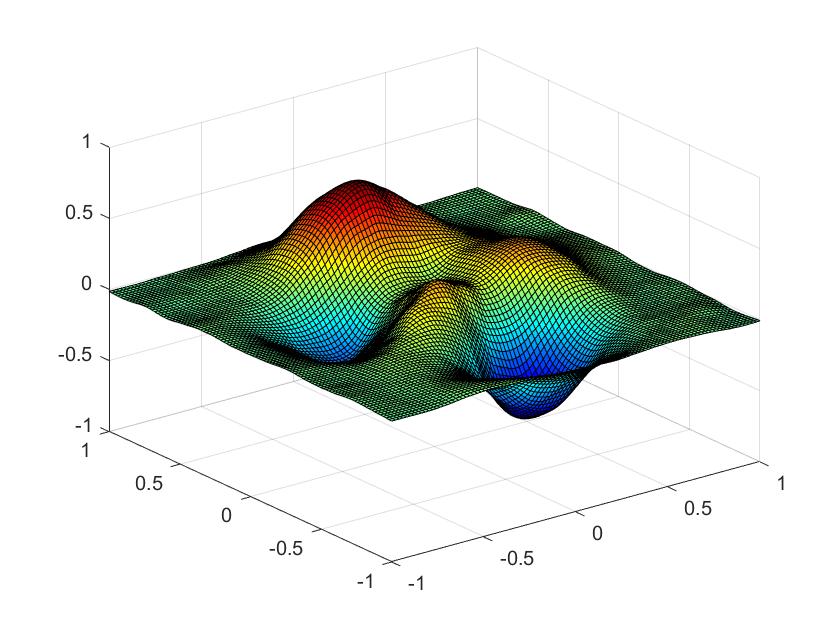} &
\includegraphics[scale=0.15]{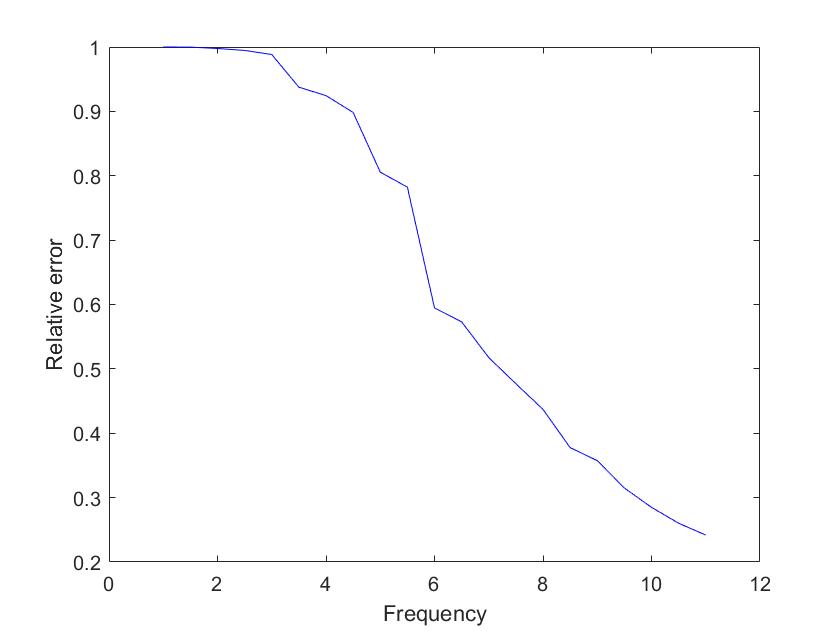} \\
(a) $q_\rho$ & (b) $e_{q_\rho}$
\end{tabular}
\caption{Example 5: the reconstruction of $q_\rho$ and relative errors from phaseless measurements.}
\label{fig10}
\end{figure}

\textcolor{rot1}{{\bf Example 6.} Finally, as a comparison, we consider the reconstruction of mass density only from data at a fixed frequency. For small frequency, it can be seen from Fig.\ref{fig11} that the reconstruction results are extremely bad no matter we have only one or multiple directions of incident wave. But for high frequency, see Fig.\ref{fig12}(a), we still can have good reconstruction if we have multiple incident waves. Once we only have one fixed incident wave with direction $d=(0,1)^\top$, we can not obtain good reconstruction result by increasing $L$, see Fig.\ref{fig12}(b,c). However, by increasing the number of frequency ($N=11$), we still can reconstruct some information of $q_\rho$, see Fig.\ref{fig13} and the results in Example 4. This further indicate the advantages to take multi-frequency data.}

\begin{figure}[htb]
\centering
\begin{tabular}{cc}
\includegraphics[scale=0.15]{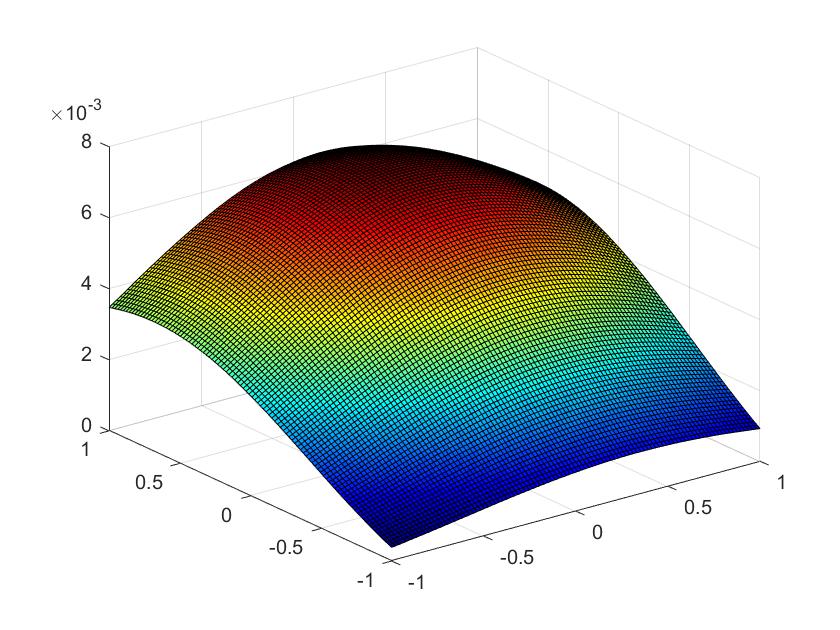} &
\includegraphics[scale=0.15]{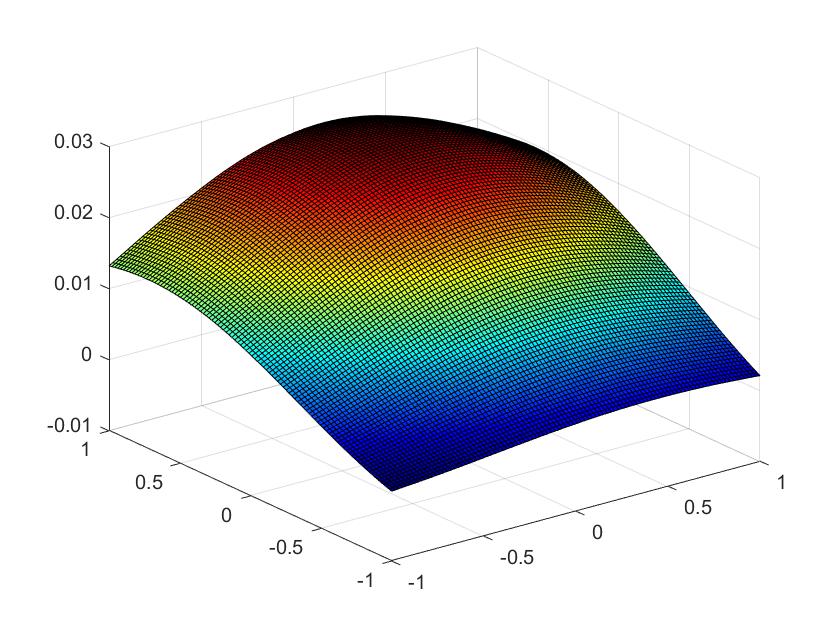} \\
(a) $M=1$ & (b) $M=16$
\end{tabular}
\caption{Example 6: the reconstruction of $q_\rho$ from data at $k=1$.}
\label{fig11}
\end{figure}

\begin{figure}[htb]
\centering
\includegraphics[scale=0.15]{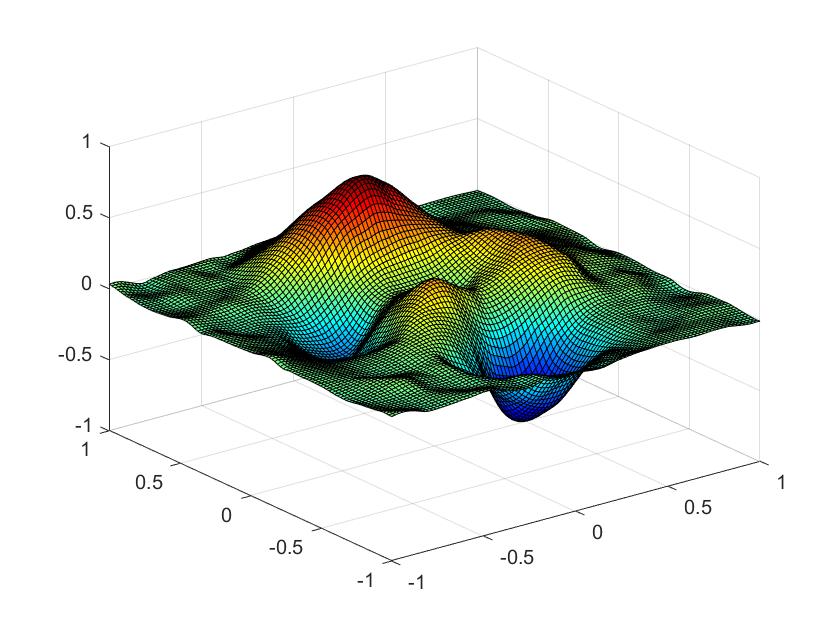}\\
(a) $M=16$, $L=10$\\
\begin{tabular}{cc}
\includegraphics[scale=0.15]{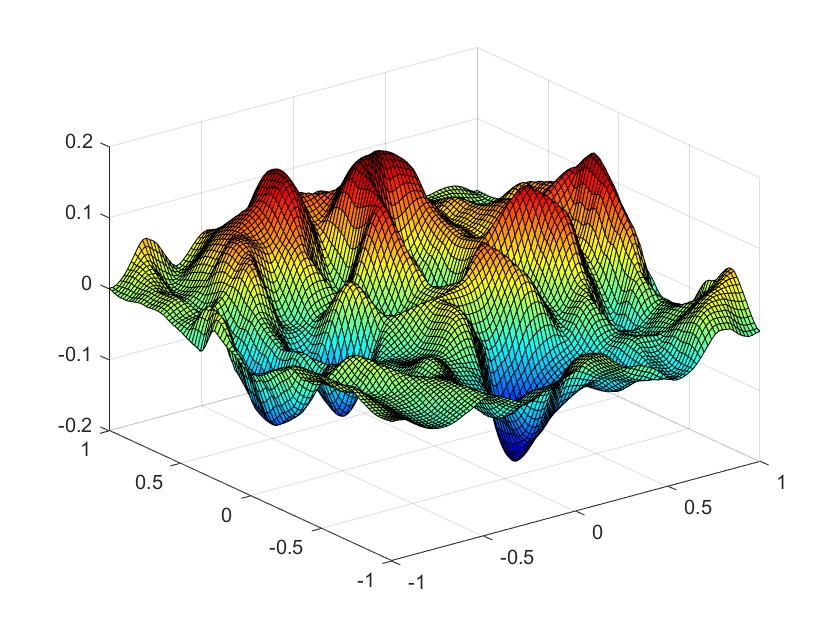} &
\includegraphics[scale=0.15]{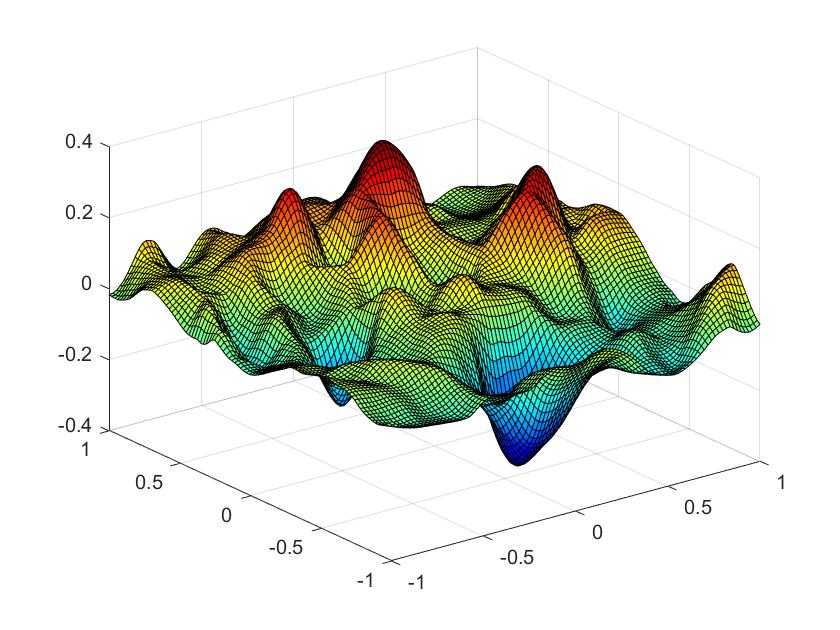} \\
(b) $M=1$, $L=10$ & (c) $M=1$, $L=100$
\end{tabular}
\caption{Example 6: the reconstruction of $q_\rho$ from data at $k=11$.}
\label{fig12}
\end{figure}

\begin{figure}[htb]
\centering
\includegraphics[scale=0.2]{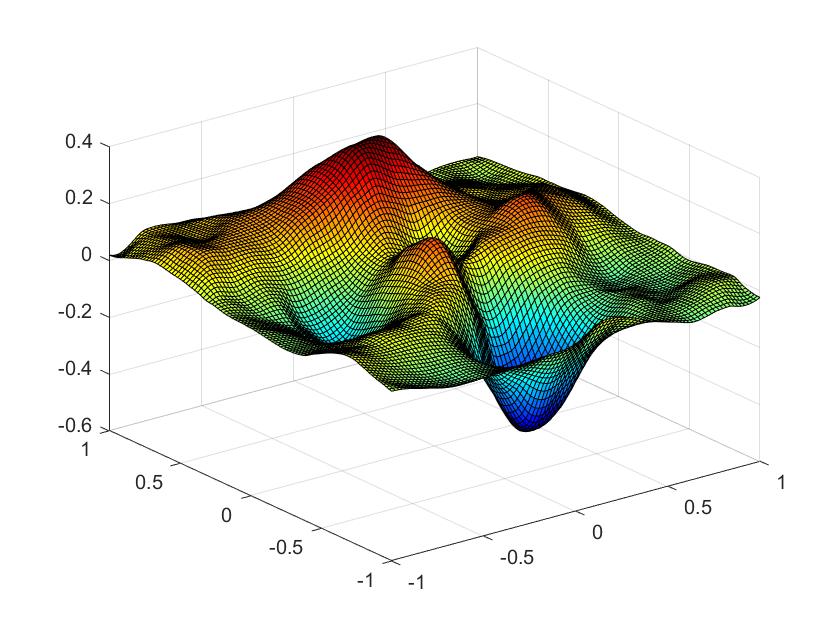}
\caption{Example 6: the reconstruction of $q_\rho$ from multi-frequency data with a fixed incident direction.}
\label{fig13}
\end{figure}

We conclude from the above numerical tests that satisfactory reconstructions are obtained through the proposed Landweber iterative algorithms.

\section*{Acknowledgments}
The work of G. Bao is supported in part by an NSFC Innovative Group Fund (No.11621101), an Integrated Project of the Major Research Plan of NSFC (No. 91630309), and an NSFC A3 Project (No. 11421110002). The work of F. Zeng is supported by the NSFC grant (No. 11501063, No. 11771068), the Chongqing Research Program of Basic Research and Frontier Technology (No. CSTC2017JCYJAX0294) and the Fundamental Research Funds for the Central University (No. 106112016CDJXY100004). The authors also would like to thank Prof. Peijun Li for his suggestions on this work.

\end{document}